\documentclass[11pt]{article}

\usepackage{amsmath,amsfonts,amssymb,amscd,mathrsfs,amsthm} 
\usepackage{array}
\usepackage{paralist}
\usepackage{xcolor}
\usepackage{multicol}
\usepackage{multirow}
\usepackage{graphicx}
\usepackage{booktabs}
\usepackage{subfigure}
\usepackage{grffile}
\usepackage{float}   
\usepackage{mathrsfs}
\usepackage{cases}
\usepackage{lscape}
\usepackage{ulem}
\usepackage{soul,xcolor}

\newtheorem{proposition}{Proposition}
\newtheorem{assumption}{Assumption}
\newtheorem{lemma}{Lemma}

\newtheorem{remark}{Remark}
\newtheorem{theorem}{Theorem}[section]

\DeclareMathOperator{\Tr}{Tr}

\usepackage{mathtools}
\usepackage[font=small,labelfont=bf,labelsep=quad]{caption}
\usepackage[colorlinks,citecolor=blue,urlcolor=blue,linkcolor=blue]{hyperref}
\usepackage{setspace}
\usepackage[margin=1in]{geometry}

\usepackage{tikz}
\usetikzlibrary{arrows.meta, positioning, matrix}

\renewenvironment{thebibliography}[1]{\begin{oldthebibliography}{#1}\setlength{\baselineskip}{.8em}
\linespread{1.5}
\small
\setlength{\parskip}{0ex}\setlength{\itemsep}{.05em}}{\end{oldthebibliography}}

\mathtoolsset{
  showonlyrefs,
  mathic }
\title{Regularity of a Multidimensional Principal–Agent Problem with Separable Effort Costs}

\author{Shuaijie Qian \footnote{Department of Mathematics, The Hong Kong University of Science and Technology. sjqian@ust.hk.}\qquad  Guan Qiao \footnote{Department of Mathematics, The Hong Kong University of Science and Technology. gqiaoaa@connect.ust.hk.}
}

\begin{document}

\maketitle

\begin{abstract}
This paper studies the regularity of the value function arising from a multidimensional continuous-time principal-agent model with separable, nonquadratic effort costs. The associated stochastic control problem has the output and the agent’s continuation utility as state variables, and its Hamilton–Jacobi–Bellman equation is fully nonlinear and degenerate, with potentially unbounded coefficients. We address these difficulties by adding an independent regularization noise and bounding the effort. For the resulting problem, we establish classical regularity of the value function and show that the optimal effort is unique, positive and remains in a fixed compact subset, uniformly with respect to both the control restriction and the regularization parameter. These estimates allow us first to remove the control restriction and then to let the additional noise vanish. Consequently, we prove that the regularized value function converges to the original value function and conclude that the latter belongs locally to the Sobolev space \(W^{2,1}_{\infty, loc}\), thereby extending the regularity analysis to separable nonquadratic effort costs, for which the arguments yielding classical solutions in the quadratic-cost setting no longer apply.
\end{abstract}

\section{Introduction}
In a principal-agent effort model, a principal delegates a project to an agent whose unobservable effort affects the project's output. Since the effort cannot be directly observed, the principal proposes at the initial time a contract contingent on the project’s output process. The agent exerts effort by trading off the contract payoff against effort costs, while the principal designs the contract to balance the project output against the compensation paid to the agent. 

This Stackelberg game problem involves the optimization of the contract in a functional space. The continuous-time model is formulated by Holmstr{\"o}m and Milgrom \cite{holmstrom1987aggregation}. In a complementary direction, Spear and Srivastava \cite{spear1987repeated} introduce a recursive method using the agent's continuation/promised utility as a state variable. This approach is later extended to continuous-time models by DeMarzo and Sannikov \cite{demarzo2006optimal} and Sannikov \cite{sannikov2008continuous}. Following this direction, Cvitani{\'c} et al. \cite{cvitanic2009optimal} and Cvitani{\'c} and Zhang \cite{cvitanic2012contract} provide sufficient conditions for the optimal contract in a general continuous-time problem with quadratic effort costs. Their characterization relies on the existence of optimizers for an associated static optimization problem (see Proposition 3.1 of \cite{cvitanic2009optimal} or Section 6.2.3 of \cite{cvitanic2012contract}).

Subsequently, Cvitani{\'c} et al. \cite{cvitanic2018dynamic} developed a systematic dynamic-programming approach to general continuous-time principal-agent problems. Their framework reformulates the principal's problem as a standard stochastic control problem whose state variables include both the output process and the continuation utility. Under sufficient smoothness of the associated value function, their approach yields an optimal contract (see Theorem 3.9 and Proposition 5.4 in \cite{cvitanic2018dynamic}). 

Establishing the required regularity, however, remains difficult in general. To the best of our knowledge, results obtained through a regularity analysis of the value function are confined to special settings, including explicitly solvable examples in Cvitani{\'c} et al. \cite{cvitanic2018dynamic} and the one-dimensional model studied by Possama{\"\i} and Touzi \cite{possamai2025there}. An alternative approach is developed by Kr{\v{s}}ek and Possama{\"\i} \cite{krvsek2026randomisation}, who allow the agent to use measure-valued controls and employ compactification techniques to prove the existence of optimal contracts, thereby circumventing the typical challenge of degenerate partial differential equations (PDEs).
Although the associated value function of the principal-agent problem is characterized by an HJB equation, this PDE is degenerate and fully nonlinear due to the special structure of such a problem, making the existence theory for classical solutions inapplicable. In some special cases, only $W^{2, 1}_p$ regularity of solutions to such PDEs is guaranteed (see Krylov \cite{krylov1987nonlinear}, Chapter 7).

In recent work, Chen et al. \cite{chen2025optimal} establish the regularity of the value function for quadratic effort costs, thereby providing a PDE-based verification of the candidate optimal contract. However, their technique relies heavily on the special structure of the quadratic cost case and therefore does not extend directly to the multidimensional setting with nonquadratic cost.

In this paper, we establish the Sobolev regularity of the value function for a multidimensional principal-agent problem with separable nonquadratic effort costs. To address the degeneracy and potentially unbounded coefficients of the original HJB equation, we introduce an independent regularization noise term and restrict the admissible control set. The resulting HJB equation is uniformly parabolic and has bounded coefficients, allowing classical regularity theory to be applied. Crucially, we prove that the classical solution belongs locally to the Sobolev space $W^{2,1}_{\infty, loc}$ and the optimal control for this restricted problem is uniformly bounded. Additionally, the local estimates and bounds are all independent of both the control restriction and the regularization parameter. Finally, we establish the convergence of these classical solutions to the original value function. Together with the reflexivity of the Sobolev space \(W^{2,1}_p\) for finite \(p\) and Mazur's lemma, we thereby obtain the regularity of the latter.

The structure of the paper is as follows. In Section \ref{sec_model_setup}, we introduce the principal-agent problem and the associated stochastic control problem. Section \ref{sec_restrict} regularizes the state dynamics and restricts the admissible effort set; it then establishes the existence of a classical solution to the regularized HJB equation and the uniform boundedness of the optimal effort. Section \ref{sec_cvg_w^b} removes the effort restriction while preserving these regularity estimates. Section \ref{sec_cvg_w} lets the regularization parameter tend to zero and proves the local \(W^{2,1}_{\infty, loc}\) regularity of the original value function. Section \ref{sec_conclusion} concludes the paper.

\section{Model setup} \label{sec_model_setup}
In this section, we introduce the setup of the model. In Subsection \ref{subsection_PA}, we introduce the principal-agent problem following the model of Cvitani{\'c} et al. \cite{cvitanic2009optimal}. In Subsection \ref{subsection_control}, we introduce the associated control problem of interest. 

\subsection{Principal-agent problem} \label{subsection_PA}
Following Cvitani{\'c} et al. \cite{cvitanic2009optimal}, we assume that in the market there is a principal and an agent. 
The principal has a project which is delegated to the agent over the time interval $[0, T]$. The agent can control the output process by exerting effort. Precisely, we assume the project's output process is  $\left\{ \hat{x}(s) \right\}_{0\leq s \leq T}$ which obeys the following stochastic differential equation (SDE):
\begin{equation}
\begin{cases}
    d\hat{x}(s) = {\lambda}(s) ds + \sigma d \mathcal{B}(s), & \text{for } s \in (0, T], \\
    \hat{x}(0) = x,
\end{cases}
\label{dyn_x}
\end{equation}
Here, \(x\in\mathbb{R}^d\) and \(\sigma=\operatorname{diag}(\sigma_1,\ldots,\sigma_d)\in\mathbb{R}^{d\times d}\), where \(\sigma_i\neq0\) for every \(i\in\{1,\ldots,d\}\). Let \(\mathbb F^{\hat{x}}\) denote the augmented filtration generated by \(\hat{x}\). The agent chooses an effort process \({\Lambda} = \{ {\lambda}(s) \}_{0 \leq s \leq T}\) that is \(\mathbb F^{\hat{x}}\)-optional and takes values in $[0,\infty)^d$. Additionally, $\{\mathcal{B}(s)\}_{s \geq 0}$ is a standard $d$-dimensional Brownian motion on a filtered probability space $\big( \mathbb{S}, \mathscr{F}^\mathcal{B},\left\{\mathscr{F}^\mathcal{B}_s\right\}_{s \geq 0}, P\big)$ with $\mathcal{B}(0) = 0$ almost surely. The filtration $\left\{\mathscr{F}^\mathcal{B}_s\right\}_{s \geq 0}$ is
generated by the Brownian motion, is right-continuous and each $\mathscr{F}^\mathcal{B}_s$ contains all $P$-null sets of $\mathscr{F}^\mathcal{B}$. 

The principal can observe the output process rather than the agent's effort. In order to incentivize the agent to exert effort, the principal will determine at time $0$ a contract based on the output process to compensate for the agent's effort costs. Precisely, at time $T$, the principal will compensate the agent by an amount $\xi$, where $\xi$ is a random variable that is contingent on $\{\hat{x}(s)\}_{0 \leq s \leq T}$. Moreover, we say $\xi \in \mathcal{C}_0$ if
\begin{align}\label{regularity xi}
\mathbb{E}[|U_A(\xi)|^p] < \infty \qquad \text{for some} \ p >1,
\end{align}
where $U_A(x) {\in C^{2+\alpha^0}}: \mathbb{R} \rightarrow \mathbb{R}$, for some $\alpha^0 \in (0,1)$, is the agent's utility function satisfying
\begin{assumption} \label{assumptionUA}
    $$
    \frac{1}{c} < (U_A^{-1})'(x) < c, \quad 0 {\leq} (U_A^{-1})''(x) \leq c.
    $$
\end{assumption}
The agent's objective function is
$$
J_A^{\xi;{\Lambda}}(x, t) := \mathbb{E}\left[U_A\left(\xi\right)-\int_t^T c({\lambda}(s)) d s\bigg| \hat{x}(t) = x\right],
$$
where $c(\lambda): [0,\infty)^d \rightarrow \mathbb{R}$ is the cost function satisfying
\begin{assumption} \label{assumption_cost}
  (1) The function $c$ is additively separable, i.e.,
    \[
    c(\lambda) = \sum_{i=1}^{d} c_i(\lambda_i),
    \]
    for all $\lambda = (\lambda_1, \dots, \lambda_d)^\top \geq 0$.\\
    (2) For any integer $1 \leq i \leq d$, $c_i(0) = c'_i(0) = c''_i(0) = 0$, $c'_i(\lambda) > 0$ and $c''_i(\lambda) > 0$ for $\lambda > 0$, and $c^{(3)}_i(\lambda) \geq 0$. Additionally, each $c_i$ satisfies \\
    (a) 
    \quad $\log(c'_i(\lambda))$ is concave on $(0,\infty)$
    and \\
    (b) \quad $\lim\limits_{\lambda \rightarrow \infty} \frac{\log\left(c'_i(\lambda)\right)}{\lambda} = 0.$
\end{assumption}
\begin{remark}
    For each $i \in \{1,\dots,d\}$ and any $ \lambda^{(1)}, \lambda^{(2)} \geq 0$, we have
    \begin{equation} \label{general_mean_ineq}
    (c'_i)^{-1}\bigg(\frac{c'_i(\lambda^{(1)}) + c'_i(\lambda^{(2)})}{2}\bigg) \leq c^{-1}_i\bigg(\frac{c_i(\lambda^{(1)}) + c_i(\lambda^{(2)})}{2}\bigg).
    \end{equation}
    To this end, define $f(x) := c_i \circ (c'_i)^{-1} (x)$ and $x_1 := c'_i(\lambda^{(1)}), x_2 := c'_i(\lambda^{(2)})$. Then
    \begin{align}
        \eqref{general_mean_ineq} \Leftrightarrow \frac{x_1+x_2}{2} \leq c'_i \circ c^{-1}_i\bigg(\frac{c_i \circ (c'_i)^{-1}(x_1) + c_i \circ (c'_i)^{-1}(x_2)}{2}\bigg) = f^{-1}\left(\frac{f(x_1)+f(x_2)}{2}\right),
    \end{align}
    thus it is sufficient to show that $f(x)$ is convex. Indeed, defining $y := (c'_i)^{-1} (x)$, we obtain
    \begin{align}
        f'(x) = c'_i \circ (c'_i)^{-1} (x) [(c'_i)^{-1}]'(x) = \frac{c'_i \circ (c'_i)^{-1} (x) }{c''_i \circ (c'_i)^{-1}(x)} = \frac{c'_i(y) }{c''_i(y)} =: g(y),
    \end{align}
    and
    \begin{align}
        f''(x) = g'(y) \frac{dy}{dx} = g'(y) \frac{1}{c_i''\left((c'_i)^{-1}(x)\right)}.
    \end{align}
    Since $c''_i > 0$, it is sufficient to show that $g(y)$ is increasing. Indeed,
    \begin{align}
        g'(y) = \frac{(c''_i(y))^2 - c'_i(y) c^{(3)}_i(y)}{(c''_i(y))^2} \geq 0,
    \end{align}
    where the inequality follows from (a) in Assumption \ref{assumption_cost}.
\end{remark}
\begin{remark}
    Cost functions in this form satisfy Assumption \ref{assumption_cost}:
    $$
    c(\lambda) = \sum_{i=1}^{d} \frac{1}{p}\lambda_i^p, \quad p > 2.
    $$
    Notice that in this case, the inequality \eqref{general_mean_ineq} is the generalized mean inequality.
\end{remark}
Moreover, we require $\Lambda$ be chosen such that the SDE \eqref{dyn_x} permits a strong solution in $[t, T]$. Denote this set of admissible controls by $\mathcal{A}^0_\lambda(x, t)$. We additionally require that
\begin{equation} \label{assumption_lambda}
    \mathbb{E}\left[\int_t^T \left\|\lambda(s)\right\|^2 ds\right] < \infty, \,\mathbb{E}\left[\int_t^T \bigg(c\big(\lambda(s)\big)\bigg)^2 ds\right] < \infty, \,\mathbb{E}\left[\int_t^T \left\|\nabla c\big(\lambda(s)\big)\right\|^2 ds\right] < \infty.
\end{equation}
The agent's problem is to find an optimal effort process $\left\{{\lambda}(s)\right\}_{t\leq s \leq T}$ to maximize the objective function, i.e.,
$$
v^\xi(x, t) := \sup _{\Lambda\in \mathcal{A}^0_\lambda(x, t)} J_A^{\xi;{\Lambda}}(x, t), \qquad \xi \in \mathcal{C}_0.
$$
A control $\Lambda^*$ is called an $optimal \ response \ to \ contract \ \xi$ if $v^\xi(x, t) = J_A^{\xi;{\Lambda^*}}(x, t)$. We denote by $\mathcal{A}^*_\lambda(x, t, \xi)$ the collection of all such optimal controls.

We now turn to the principal's perspective. The agent has an outside opportunity, which is modeled as a reservation utility $y \in \mathbb{R}$. Thus, to persuade the agent into the game, the principal has to promise that the value of the agent's goal is no smaller than $y$. That is,
\begin{align}\label{contract0}
y(t) = v^\xi(x, t) \geq y.  
\end{align}
By the monotonicity of the utility functions, we only need to consider $ y(t) = y$. Thus, the principal can only choose a contract from the set
\begin{align}
    \Xi := \{\xi \in \mathcal{C}_0: \mathcal{A}^*_\lambda(x, t, \xi) \neq \emptyset \ \text{and} \ v^\xi(x, t) = y\}.
\end{align}
Following the standard convention, we assume that the agent chooses the response that is best for the principal when indifferent among optimal responses. Thus, the principal's problem is
\begin{align}
    u(x, y, t)=\sup _{\xi \in \Xi}\sup _{\Lambda \in \mathcal{A}^*_\lambda(x, t, \xi)}\mathbb{E}\bigg[U_P\bigg(\ell\left(\hat{x}(T)\right)-\xi\bigg) \bigg| {\hat{x}(t)} = {x}\bigg],
\end{align}
where $\ell\left(\cdot\right) {\in C^{2+\alpha^0}}: \mathbb{R}^d \rightarrow \mathbb{R}$ is a liquidation function satisfying
\begin{assumption} \label{assumptionliquid}
    $\nabla \ell(x) \geq \frac{1}{c_\ell} \mathbf{1}$, $- c_\ell \mathbf{I}  \preceq \nabla^2 \ell(x) \preceq 0$, and 
    $$
    |\ell(x)| \leq c_\ell(\|x\| + 1),
    $$
    for some constant $c_\ell > 0$,
\end{assumption}
and $U_P(x) {\in C^{2+\alpha^0}}: \mathbb{R} \rightarrow \mathbb{R}$ is the principal's utility function satisfying
\begin{assumption} \label{assumptionUP}
    $$
    U_P \leq 0, \quad U_P' > 0, \quad U_P'' \leq 0, \quad \left|\frac{U_P''}{U_P'}\right| \leq c_P
    $$
    for some constant $c_P > 1$.
\end{assumption}
\begin{remark} [Growth condition of $U_P'$ and {$U_P$}] \label{rmk_growth_U_P'}
    Fix $x_0 \in \mathbb{R}$. On the one hand, $\forall x \leq x_0$, we have
    \begin{align}
        &0 \geq \int_x^{x_0} \frac{U_P''}{U_P'}dt \geq \int_x^{x_0} - c_P dt\\
        \Rightarrow \ & 0 \geq \ln (\frac{U_P'(x_0)}{U_P'(x)}) \geq -c_P(x_0-x)\\
        \Rightarrow \ & U_P'(x_0) \leq U_P'(x) \leq U_P'(x_0) e^{c_P(x_0-x)}\\
        \Rightarrow \ & |U_P'(x)| \leq U_P'(x_0) e^{c_P|x-x_0|}.
    \end{align}
    On the other hand, $\forall x \geq x_0$, we have
    \begin{align}
        &0 \geq \int_{x_0}^x \frac{U_P''}{U_P'}dt \geq \int_{x_0}^x - c_P dt\\
        \Rightarrow \ & 0 \geq \ln (\frac{U_P'(x)}{U_P'(x_0)}) \geq -c_P(x-x_0)\\
        \Rightarrow \ & U_P'(x_0) \geq U_P'(x) \geq U_P'(x_0) e^{-c_P(x-x_0)}\\
        \Rightarrow \ & |U_P'(x)| \leq U_P'(x_0) e^{c_P|x-x_0|}.
    \end{align}
    Therefore, $|U_P'(x)| \leq U_P'(x_0) e^{c_P|x-x_0|}$.

    Since $\forall x \leq x_0$, we have $U_P'(x) \leq U_P'(x_0) e^{c_P(x_0-x)}$, integrating on both sides yields
    $$
    U_P(x) \geq U_P(x_0) - \frac{U_P'(x_0)}{c_P} e^{c_P(x_0 - x)} + \frac{U_P'(x_0)}{c_P}.
    $$
    Moreover, since $\forall x \geq x_0$, we have $U_P'(x) \geq U_P'(x_0) e^{-c_P(x-x_0)}$, it follows that
    $$
    U_P(x) \geq U_P(x_0) - \frac{U_P'(x_0)}{c_P} e^{c_P(x_0 - x)} + \frac{U_P'(x_0)}{c_P}.
    $$
\end{remark}

\subsection{The associated control problem} \label{subsection_control}
The above principal-agent problem motivates the study of a stochastic control formulation. Following Cvitani{\'c} et al. \cite{cvitanic2018dynamic,cvitanic2009optimal}, we take the following control problem as our object of study.

For $z\in[0,\infty)^d$, the agent's Hamiltonian associated with the output dynamics \eqref{dyn_x} is
\[
H(z):=\sup_{\lambda\in[0,\infty)^d}
\left\{\lambda^\top z-c(\lambda)\right\}.
\]
Under Assumption \ref{assumption_cost}, the Hamiltonian admits a unique maximizer characterized by
\[
z=\nabla c(\lambda^*).
\]
To be more precise, consider the process $\Lambda = \left\{{\lambda}(s)\right\}_{t\leq s \leq T}$ that is \(\mathbb F^{\hat{x}}\)-optional, takes values in $[0,\infty)^d$, satisfies \eqref{assumption_lambda},
and is such that the following stochastic differential equation admits a strong solution $\left\{ \left(\hat{x}^{\lambda}(s), y^{\lambda}(s)\right)\right\}_{t \leq s \leq T}$ on $[t,T]$.  
\begin{equation}
\begin{cases}
d \hat{x}^{\lambda}(s)= {\lambda}(s) d s+\sigma d \mathcal{B}(s), \\
d y^{\lambda}(s)=c\big(\lambda(s)\big) d s+ \left[\nabla c\left(\lambda(s)\right)\right]^\top \sigma d \mathcal{B}(s), \\
\hat{x}^{\lambda}(t)=x, \quad y^{\lambda}(t)=y.
\end{cases}
\label{auxiliary_noepsilon}
\end{equation}

We denote the set of all such admissible controls by $\mathcal{A}^0(x, y, t)$ and set
\begin{equation} \label{objective_noepsilon}
    w^0(x, y, t)=\sup _{\Lambda\in \mathcal{A}^0(x, y, t)} \mathbb{E}\left[U_P\big(\ell\left(\hat{x}^{\lambda}(T)\right)-U_A^{-1}(y^{\lambda}(T))\big)\right], \forall x\in \mathbb{R}^d, y\in \mathbb{R}, t \in [0, T].
\end{equation}
The following lemma establishes the necessary boundedness required to apply the dynamic programming principle.
\begin{lemma} \label{lemma_bdd_obj}
    For every $(x_0,y_0) \in \mathbb{R}^d \times \mathbb{R}$ and $R > 0$, $w^0$ is bounded on $\overline{Q}$, where $Q = B_R(x_0,y_0) \times (0,T)$ and $B_R(x_0,y_0) := \left\{(x,y) \in \mathbb{R}^d \times \mathbb{R} \mid \|x-x_0\|^2+(y-y_0)^2 < R^2\right\}$.
\end{lemma}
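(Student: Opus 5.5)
Since $U_P\le 0$, the value $w^0$ is bounded above by $0$, so the task reduces to a lower bound on $\overline{Q}$. For the lower bound I will use one explicit admissible control, the zero effort $\lambda\equiv 0$. This control is $\mathbb F^{\hat x}$-optional, and \eqref{assumption_lambda} holds trivially because $c(0)=0$ and $\nabla c(0)=0$ by Assumption \ref{assumption_cost}(2). The system \eqref{auxiliary_noepsilon} then has the explicit strong solution
\[
\hat{x}^0(s)=x+\sigma(\mathcal{B}(s)-\mathcal{B}(t)),\qquad y^0(s)\equiv y .
\]
So $\Lambda\equiv0\in\mathcal{A}^0(x,y,t)$, and
\[
w^0(x,y,t)\ \ge\ \mathbb{E}\Big[U_P\big(\ell(x+\sigma(\mathcal{B}(T)-\mathcal{B}(t)))-U_A^{-1}(y)\big)\Big].
\]

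Next I bound the argument of $U_P$ from below. By Assumption \ref{assumptionliquid}, $\ell(z)\ge -c_\ell(\|z\|+1)$. By Assumption \ref{assumptionUA}, $U_A^{-1}$ is Lipschitz with constant $c$, so $U_A^{-1}(y)\le U_A^{-1}(y_0)+c|y-y_0|$. For $(x,y,t)\in\overline{Q}$ this gives
\[
\ell(\hat x^0(T))-U_A^{-1}(y)\ \ge\ -K - c_\ell\|\sigma\|\,\|\mathcal{B}(T)-\mathcal{B}(t)\|,
\]
where $K$ depends only on $x_0,y_0,R,c,c_\ell$ and $U_A^{-1}(y_0)$.

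I then apply the growth bound in Remark \ref{rmk_growth_U_P'} with $x_0=0$. Since $U_P$ is increasing,
\[
U_P\big(\ell(\hat x^0(T))-U_A^{-1}(y)\big)\ \ge\ U_P(0)+\frac{U_P'(0)}{c_P}-\frac{U_P'(0)}{c_P}\,e^{c_P K}\,e^{c_Pc_\ell\|\sigma\|\,\|\mathcal{B}(T)-\mathcal{B}(t)\|}.
\]
Finally, I take expectations. The vector $\mathcal{B}(T)-\mathcal{B}(t)$ is Gaussian with covariance $(T-t)I$ and $T-t\le T$, so $\mathbb{E}\,e^{a\|\mathcal{B}(T)-\mathcal{B}(t)\|}\le \mathbb{E}\,e^{a\sqrt{T}\|N\|}<\infty$ with $N$ standard normal. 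This bound is uniform in $t\in[0,T]$. Combining the steps gives $w^0\ge -M$ on $\overline{Q}$ for a finite constant $M$.

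There is no real obstacle here. The only points needing care are that the zero control is admissible, which is where $c_i'(0)=0$ is used, and that exponential moments of Gaussians are finite, so the exponential growth of $-U_P$ at $-\infty$ causes no integrability trouble.
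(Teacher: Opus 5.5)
Your proof is correct and follows the same scheme as the paper's: the upper bound $w^0\le 0$ comes from $U_P\le 0$, and the lower bound comes from evaluating a single constant admissible control whose terminal state is Gaussian, using the exponential lower bound on $U_P$ implied by $|U_P''/U_P'|\le c_P$ and the finiteness of Gaussian exponential moments. The only difference is that you take $\lambda\equiv 0$, which freezes $y$ and removes the paper's Cauchy--Schwarz step and $y$-moment bounds, whereas the paper takes $\lambda\equiv\mathbf{1}$; the paper's choice lies in $[\frac{1}{c_\lambda},c_\lambda]^d$, so its argument transfers verbatim to $w^b$ (as the paper later asserts), while your zero control is not admissible in $\mathcal{A}^b(x,y,t)$.
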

Together with the dynamic programming principle, Lemma \ref{lemma_bdd_obj} guarantees that $w^0(x, y, t)$ is the viscosity solution of the following PDE
\begin{equation} \label{pde_w_noepsilon}
    \begin{cases}
        w_t + \sup_{\lambda \in [0,\infty)^d}
        \Bigg\{
        \frac{1}{2} \Tr[\sigma \sigma^\top w_{xx}] + [\sigma\sigma^\top\nabla c(\lambda)]^\top w_{xy} + \frac{1}{2} \|\sigma^\top\nabla c(\lambda)\|^2 w_{yy} + \lambda^\top w_x + c(\lambda) w_y \Bigg\} = 0,\\
        w(x, y,T) = U_P(\ell\left(x\right) - U_A^{-1}(y)),\\
        \sup\limits_{t \in [0,T], x \in \mathbb{R}^d, y \in \mathbb{R}} \frac{|w(x,y,t)|}{e^{D(\|x\|+|y|)}} < \infty, \text{ for some constant } D > 0.
    \end{cases}
\end{equation}

We aim to establish that $w^0$ is a $W^{2, 1}_{\infty, loc}$ solution of the PDE \eqref{pde_w_noepsilon}. However, due to the degeneracy of the PDE operator and the unboundedness of the coefficients, the regularity of $w^0$ is not immediately guaranteed. 

To address this, we proceed in 3 steps. First, in Section \ref{sec_restrict}, we (1) introduce an additional noise term $\sqrt{\epsilon}\sigma d \mathcal{W}(s)$ to overcome degeneracy and (2) restrict the admissible control set to $[\frac{1}{c_\lambda},c_\lambda]^d$ to manage the unbounded coefficients. We then establish the regularity of the value function $w^b(x,y,t)$ and show that the optimal effort $\lambda^*_b$ is uniformly bounded, and that the uniform bound $\tilde{c}_\lambda$ is independent of the restriction $c_\lambda$ and $\epsilon$. Based on this, we prove that its partial derivatives are locally bounded up to second order, uniformly with respect to $c_\lambda$ and $\epsilon$. That is, $w^b \in W^{2,1}_\infty$ locally. The uniform boundedness of $\lambda^*_b$ and the regularity of $w^b$ represent the main contribution of our paper.
Second, in Section \ref{sec_cvg_w^b} we remove the restriction on the control set and show the convergence of the restricted value function $w^b(x,y,t)$ to the unrestricted one $w^\epsilon(x,y,t)$. Consequently, $w^\epsilon$ will inherit the regularity.
Finally, in Section \ref{sec_cvg_w}, we show that the unrestricted value function $w^\epsilon(x,y,t)$ in Section \ref{sec_cvg_w^b} converges to $w^0(x,y,t)$. Therefore, $w^0(x,y,t) \in W^{2,1}_p$ locally by the reflexivity of the Sobolev space $W^{2,1}_p$.

\section{Model restriction} \label{sec_restrict}
To overcome the degeneracy of the PDE and the unboundedness of the coefficients, we (1) modify the dynamic \eqref{auxiliary_noepsilon} by adding an independent noise and (2) restrict the admissible control set to be bounded. Precisely, we consider
\begin{equation}
\begin{cases}
d x^{\lambda}(s)= {\lambda}(s) d s+\sigma d \mathcal{B}(s) + \sqrt{\epsilon}\sigma d \mathcal{W}(s), \\
d y^{\lambda}(s)=c\big(\lambda(s)\big) d s+ \left[\nabla c\left(\lambda(s)\right)\right]^\top \sigma d \mathcal{B}(s), \\
x^{\lambda}(t)=x, \quad y^{\lambda}(t)=y, 
\end{cases}
\label{auxiliary}
\end{equation}
where $ 0 < \epsilon < 1$ is fixed, and $\mathcal{B}$ and $\mathcal{W}$ are independent standard $d$-dimensional Brownian motions.
Let \(\mathbb F^{\mathcal B,\mathcal W}\) denote the augmented filtration jointly generated by \(\mathcal B\) and \(\mathcal W\), and let \(\mathcal A(x,y,t)\) denote the set of \(\mathbb F^{\mathcal B,\mathcal W}\)-optional controls taking values in $[0,\infty)^d$, satisfying \eqref{assumption_lambda}, and are such that \eqref{auxiliary} admits a strong solution. Since \(\mathbb F^{\hat{x}} \subseteq \mathbb F^{\mathcal B,\mathcal W}\), we have \(\mathcal A^0(x,y,t)\subseteq\mathcal A(x,y,t)\).
Additionally, we define $\mathcal{A}^b(x, y, t) := \left\{ \Lambda \in \mathcal{A}(x, y, t) : \frac{1}{c_\lambda} \leq \lambda_i(s) \leq c_\lambda, \, \forall i = 1, \dots, d, \, \forall s \in [t, T] \right\}$ and set
\begin{equation} \label{objective_bdd_lamb}
    w^b(x, y, t) = \sup _{\Lambda\in \mathcal{A}^b(x, y, t)} \mathbb{E}\left[U_P\big(\ell\left(x^{\lambda}(T)\right)-U_A^{-1}(y^{\lambda}(T))\big)\right], \forall x\in \mathbb{R}^d, y\in \mathbb{R}, t \in [0, T].
\end{equation}

Notice that Lemma \ref{lemma_bdd_obj} applies analogously to establish the local boundedness of $w^b(x, y, t)$. Together with the dynamic programming principle, this ensures that $w^b(x, y, t)$ is the viscosity solution of the following PDE
\begin{equation} \label{pde_w_bdd_lamb}
    \begin{cases}
        w_t + \sup_{\lambda \in [\frac{1}{c_\lambda},c_\lambda]^d}
        \Bigg\{
        \frac{1+\epsilon}{2} \Tr[\sigma \sigma^\top w_{xx}] + [\sigma \sigma^\top \nabla c(\lambda)]^\top w_{xy}\\
        \qquad\qquad\qquad\qquad + \frac{1}{2} \|\sigma^\top \nabla c(\lambda)\|^2 w_{yy} + \lambda^\top w_x + c(\lambda) w_y \Bigg\} = 0,\\
        w(x, y,T) = U_P(\ell\left(x\right) - U_A^{-1}(y)),\\
        \sup\limits_{t \in [0,T], x \in \mathbb{R}^d, y \in \mathbb{R}} \frac{|w(x,y,t)|}{e^{D(\|x\|+|y|)}} < \infty, \text{ for some constant } D > 0.
    \end{cases}
\end{equation}
The corresponding first-order condition is given by
\begin{equation} \label{FOC_w^b}
    \nabla^2 c(\lambda_f) \sigma \sigma^\top  w^b_{xy} + \nabla^2 c(\lambda_f) \sigma \sigma^\top \nabla c(\lambda_f)w^b_{yy} + w^b_x + \nabla c(\lambda_f) w^b_y = 0,
\end{equation}

In Theorem \ref{thm_regularity_w^b}, we show that PDE \eqref{pde_w_bdd_lamb} admits a classical solution $w^b(x, y, t) \in C^{2+\alpha,\frac{2+\alpha}{2}}\bigg(\mathbb{R}^d \times \mathbb{R} \times [0,T]\bigg)$. Based on this, we also show that the F.O.C. \eqref{FOC_w^b} admits a unique and uniformly bounded solution. Thus, when $c_\lambda$ is large enough, this F.O.C. gives a unique and uniformly bounded solution $\lambda_f$ which is identical to the optimal effort $\lambda^*_b$. Additionally, we show that $w^b \in W^{2,1}_\infty$ locally, uniformly in $c_\lambda$ and $\epsilon$.

To this end, we have the following theorem for the regularity of $w^b(x,y,t)$.
\begin{theorem} \label{thm_regularity_w^b}
    PDE \eqref{pde_w_bdd_lamb} admits a classical solution $w^b(x, y, t) \in C_{loc}^{2+\alpha,\frac{2+\alpha}{2}}\bigg(\mathbb{R}^d \times \mathbb{R} \times [0,T]\bigg)$.
\end{theorem}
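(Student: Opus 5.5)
The plan is to invoke the Evans--Krylov theory for uniformly parabolic, fully nonlinear, concave (convex) equations. Then we identify the resulting classical solution with the value function $w^b$ by a verification or comparison argument.

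\textbf{Step 1: uniform ellipticity.} Write \eqref{pde_w_bdd_lamb} as $w_t + F(D^2 w, Dw) = 0$ with
\[
F(M,p) = \sup_{\lambda \in [\frac{1}{c_\lambda},c_\lambda]^d} \left\{ \tfrac12 \Tr\left[A(\lambda) M\right] + b(\lambda)^\top p \right\}.
\]
Here $A(\lambda)$ is the $(d+1)\times(d+1)$ diffusion matrix of $(x,y)$ and $b(\lambda) = (\lambda, c(\lambda))$. By construction $A(\lambda) = \Sigma(\lambda)\Sigma(\lambda)^\top$, with
\[
\Sigma(\lambda) = \begin{pmatrix} \sigma & \sqrt{\epsilon}\sigma \\ \nabla c(\lambda)^\top\sigma & 0 \end{pmatrix}.
\]
A direct block computation (Schur complement) gives
\[
\xi^\top A \xi = \|\sigma^\top(\xi_x + \xi_y \nabla c)\|^2 + \epsilon\|\sigma^\top \xi_x\|^2 .
\]
This quadratic form is zero only if $\xi = 0$. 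Because $\lambda$ ranges over a compact set and $\nabla c$ is continuous, $A(\lambda)$ is uniformly positive definite and bounded, and $b$ is bounded. The constants depend on $c_\lambda$ and $\epsilon$, which is fine for this theorem. The function $F$ is convex in $(M,p)$ as a supremum of linear maps. It is independent of $(x,y,t)$, and it is Lipschitz.

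\textbf{Step 2: existence of a smooth solution on bounded domains.} Apply the Evans--Krylov/Krylov theorem (Krylov, \emph{Nonlinear elliptic and parabolic equations of second order}, Ch.~6) on the cylinders $Q_R = B_R \times (0,T)$. Use Cauchy--Dirichlet data equal to a smooth extension of the terminal datum, namely $g(x,y)=U_P(\ell(x)-U_A^{-1}(y))$, which is $C^{2+\alpha^0}$. This gives $w_R \in C^{2+\alpha,\frac{2+\alpha}{2}}(\overline{Q_R})$ for some $\alpha \in (0,\alpha^0]$. Each $w_R$ is the value function of the control problem stopped at the exit time from $B_R$, by the standard verification theorem.

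\textbf{Step 3: pass to the limit $R\to\infty$.} First establish local uniform sup bounds on $w_R$, using the analogue of Lemma~\ref{lemma_bdd_obj} together with the growth bound $|g|\le Ce^{D(\|x\|+|y|)}$ from Remark~\ref{rmk_growth_U_P'}. A barrier of the form $Ke^{Kt}e^{D\sqrt{1+\|x\|^2+y^2}}$ is a supersolution because the coefficients are bounded, so it controls $w_R$. Interior Evans--Krylov estimates, combined with the boundary estimates at $t=T$ (where the data are $C^{2+\alpha}$), then bound $\|w_R\|_{C^{2+\alpha,\frac{2+\alpha}{2}}(\overline{B_r}\times[0,T])}$ uniformly in $R\ge 2r$. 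Arzelà--Ascoli and a diagonal argument give a subsequence converging in $C^{2,1}_{loc}$ to some $w \in C^{2+\alpha,\frac{2+\alpha}{2}}_{loc}$. This $w$ solves \eqref{pde_w_bdd_lamb} classically and satisfies the exponential growth bound.

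\textbf{Step 4: identification, the main obstacle.} We must show $w = w^b$. I would use a verification argument. Apply Itô's formula to $w(x^\lambda, y^\lambda, s)$ along any $\Lambda \in \mathcal A^b$, localize with exit times, and use the growth bound. The process $y^\lambda$ has bounded coefficients, so exponential moments are finite, and uniform integrability gives $w \ge J$. Equality comes from a measurable selector $\lambda^*(x,y,t)$ of the supremum, which yields an optimal Markov control. The drift is only measurable, but the diffusion is nondegenerate, so a strong solution exists by Veretennikov. Alternatively, one can invoke comparison for viscosity solutions within the exponential growth class. This step is delicate mainly because the growth class must be handled. The parabolic boundary estimates up to $t=T$ are expected to be routine once the terminal data have been verified to be $C^{2+\alpha}$.
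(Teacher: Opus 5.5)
Your argument works, provided that in Step 4 you use the comparison option, but it is organized differently from the paper's.

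\textbf{How the paper argues.} The paper never lets $R\to\infty$. On each cylinder $B_R(x_0,y_0)\times(0,T)$ it imposes the value function $w^b$ itself as lateral data. It first shows $w^b$ is continuous: by the dynamic programming principle $(w^b)^*$ and $(w^b)_*$ are a viscosity sub- and supersolution, and comparison (Remark \ref{rmk_CP}) gives $(w^b)^*\le(w^b)_*$. It then applies Krylov's Theorem 6.4.4, using the convexity in Lemma \ref{lemma_convex_pde}. This gives a solution that is $C^{2+\alpha,\frac{2+\alpha}{2}}$ on a smaller ball up to $t=T$. That solution is identified with $w^b$ by comparison on the bounded cylinder, where no growth barrier is needed. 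So the paper needs no barrier, no uniform-in-$R$ Evans--Krylov estimates and no diagonal extraction.

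\textbf{What your route costs and buys.} Your exhaustion with the smooth data $g$ costs exactly those three steps. In return it produces a classical solution in the exponential growth class independently of the control problem. It also does not need continuity of $w^b$ in advance: comparison against the envelopes gives $(w^b)^*\le w\le(w^b)_*$, hence continuity and $w=w^b$ in one stroke. Your explicit check of uniform parabolicity, which the paper leaves implicit, is worthwhile. It genuinely uses the lower bound $\lambda\ge 1/c_\lambda$, because $c_i'(0)=0$ makes the $y$-diffusion degenerate at $\lambda=0$.

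\textbf{Three corrections.}

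First, the verification option in Step 4 fails as written. Your maximizer $\lambda^*(x,y,t)$ is only a measurable selector. Uniqueness of the maximizer is established only later, in Proposition \ref{prop_unique_lambda_w^b}, whose proof presupposes the present theorem. Moreover, $\lambda^*$ enters the diffusion coefficient $\nabla c(\lambda^*)^\top\sigma$ of the $y$-equation, not just the drift. Veretennikov's strong existence result allows a measurable drift but requires a regular (Lipschitz in the state) nondegenerate diffusion coefficient. So this does not produce an admissible strong control in $\mathcal{A}^b$. The half $w\ge w^b$ via It\^o's formula is fine; for $w\le w^b$, use comparison with $(w^b)_*$.

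Second, $w_R$ is in general not $C^{2+\alpha,\frac{2+\alpha}{2}}$ up to the corner $\partial B_R\times\{T\}$. The first-order compatibility condition $F(D^2g,Dg)=0$ on $\partial B_R$ fails for your time-independent lateral data. You only use regularity on $\overline{B_r}\times[0,T]$ with $2r\le R$, so restrict the claim to that.

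Third, for this backward equation the barrier must carry the factor $e^{K(T-t)}$, not $e^{Kt}$. As written, $\phi_t=K\phi>0$ has the wrong sign for a supersolution.
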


Building on the above theorem, we proceed to establish the uniform boundedness of the solution of the F.O.C. \eqref{FOC_w^b}. It follows from the additive separability of the cost function that for each $i \in \{1, \dots, d\}$, the $i$-th row of the F.O.C. \eqref{FOC_w^b} is:
\begin{equation} \label{FOC_w^b_explicit}
\sigma_i^2 c''_i(\lambda_{f,i}) w^b_{x_iy} + \sigma_i^2 c''_i(\lambda_{f,i}) c'_i(\lambda_{f,i}) w^b_{yy} + w^b_{x_i} + c'_i(\lambda_{f,i}) w^b_y = 0,
\end{equation}
where $\sigma_i \in \mathbb{R}$ denotes the $i$-th diagonal entry of $\sigma$ and $\lambda_{f,i}$ is the $i$-th component of $\lambda_f$. Consequently, it suffices to prove that each component of $\lambda_f$ lies in $[\frac{1}{\tilde{c}_\lambda},\tilde{c}_\lambda]$ for some constant $\tilde{c}_\lambda > 1$. 

The following proposition guarantees the crucial property of $\lambda_b^*$ and $w^b$, which constitutes the main contribution of our paper.
\begin{proposition} \label{prop_unique_lambda_w^b}  
    (1) The first-order condition \eqref{FOC_w^b} admits a unique solution $\lambda_f\in(0,\infty)^d$. Additionally, there exists a constant $\tilde{c}_\lambda > 1$, independent of $c_\lambda$ and $\epsilon$, such that $\lambda_f \in [\frac{1}{\tilde{c}_\lambda},\tilde{c}_\lambda]^d$. Consequently, the optimal effort $\lambda^*_b$ is unique and positive. Moreover, $\lambda^*_b \in [\frac{1}{\tilde{c}_\lambda},\tilde{c}_\lambda]^d,\quad \forall c_\lambda > 1, 0 < \epsilon < 1$.

    {(2) $w^b \in W^{2,1}_{\infty,loc}(\mathbb{R}^d \times \mathbb{R} \times [0,T])$. Moreover, $\forall (x_0,y_0) \in \mathbb{R}^d \times \mathbb{R}$ and $R > 0$, there exists a constant $C_Q > 0$, independent of $\epsilon$ and $c_\lambda$, such that $\|w^b\|_{W^{2,1}_\infty(\overline{Q})} \leq C_Q$, where $Q = B_R(x_0,y_0) \times (0,T)$ is the cylinder, and $B_R(x_0,y_0) := \{(x,y) \in \mathbb{R}^d \times \mathbb{R} \mid \|x-x_0\|^2+(y-y_0)^2 < R^2\}$.}
\end{proposition}
To prove (1), we determine the sign of the partial derivatives in the first-order condition \eqref{FOC_w^b} and show that they are all comparable. Precisely, for any integer $1 \leq j \leq d$, we show that (i) $\max \{w^b_{x_jx_j},w^b_{yy}\} \leq 0$, and {$(w^b_{x_jy})^2 \leq w^b_{x_jx_j}w^b_{yy}$}. (ii) $w^b_{x_j} > 0 > w^b_y$ and {$|\frac{w^b_y(x,y,t)}{w^b_{x_j}(x,y,t)}| \in [\frac{1}{c \cdot c_\ell},c \cdot c_\ell]$}. (iii) $w^b_{x_jx_j}$, $w^b_{yy}$ and $w^b_{x_jy}$ are bounded by $w^b_{x_j}$, {i.e., $w^b_{{x_j}{x_j}}(x,y,t) + \bar{c}_P w^b_{{x_j}}(x,y,t) \geq 0$, $w^b_{yy}(x,y,t) + c_H w^b_{{x_j}}(x,y,t) \geq 0$ and $\left|w^b_{{x_j}y}(x,y,t)\right| \leq c_H w^b_{x_j}(x,y,t)$ for some non-negative constants $\bar{c}_P$ and $c_H$}. (i) is guaranteed by Lemma \ref{lemma_sec_derivative_w^b} while (ii) and (iii) are guaranteed by Lemma \ref{lem_viscosity_partial_derivatives}. By (i)-(iii), together with the assumptions on the cost function, we see that, when $\lambda_{f,i}$ is sufficiently large, the negative terms in the first-order condition \eqref{FOC_w^b_explicit} will dominate, while for sufficiently small $\lambda_{f,i}$, the positive parts dominate. This leads to a contradiction when $\lambda_{f,i}$ is unbounded. The signs of the partial derivatives are also used to show the uniqueness. The proof of (1) will be divided into 4 steps. Additionally, (2) is guaranteed by (1).
\begin{proof}[Proof of Proposition \ref{prop_unique_lambda_w^b}]
    \textbf{Proof of (1):} 
    {Step 1:} We prove the concavity of $w^b$ to characterize the second-order partial derivatives of $w^b$, which guarantees that $w^b_{x_jy}$ can be dominated by $w^b_{x_j}$ later.
    
    This step is completed by the following lemma.
    \begin{lemma} \label{lemma_sec_derivative_w^b}
        $\forall 0 < \epsilon < 1, t \in [0,T]$, $w^b(x, y, t)$ is concave on $\mathbb{R}^d \times \mathbb{R}$. Equivalently, its Hessian matrix
        \[
        \nabla^2_{(x,y)} w^b = 
        \begin{pmatrix}
        w^b_{xx} & w^b_{xy} \\
        (w^b_{xy})^\top & w^b_{yy}
        \end{pmatrix}
        \]
        is negative semidefinite. In particular, $w^b_{yy} \leq 0$, $w^b_{x_j x_j} \leq 0$, $(w^b_{x_j y})^2 \leq w^b_{x_j x_j} w^b_{yy}$, and $(w_{x_jx_k}^b)^2 \leq w_{x_jx_j}^b w_{x_kx_k}^b$ for $j,k \in \{1,2,\dots,d\}$ and $j \neq k$.
    \end{lemma}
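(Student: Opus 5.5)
We prove concavity of $w^b(\cdot,\cdot,t)$ directly from the control representation \eqref{objective_bdd_lamb}, using that the controlled state is affine in the initial data once the control is suitably reparametrized. The statements about second derivatives then follow because $w^b$ is $C^2$ by Theorem \ref{thm_regularity_w^b}. A concave $C^2$ function has a negative semidefinite Hessian, and the diagonal entries and $2\times 2$ principal minors of a negative semidefinite matrix give $w^b_{yy}\le 0$, $w^b_{x_jx_j}\le0$, $(w^b_{x_jy})^2\le w^b_{x_jx_j}w^b_{yy}$ and $(w^b_{x_jx_k})^2\le w^b_{x_jx_j}w^b_{x_kx_k}$.

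Fix $t$, two initial points $(x^{(1)},y^{(1)})$ and $(x^{(2)},y^{(2)})$, and $\theta\in(0,1)$. Take $\varepsilon'$-optimal controls $\Lambda^{(1)},\Lambda^{(2)}\in\mathcal A^b$ on the same probability space; this is possible since both are $\mathbb F^{\mathcal B,\mathcal W}$-optional.

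\begin{itemize}
\item \textbf{Mixed control.} Define $\bar\lambda$ componentwise by
\[
c_i'(\bar\lambda_i)=\theta c_i'(\lambda^{(1)}_i)+(1-\theta)c_i'(\lambda^{(2)}_i).
\]
This keeps the martingale part of $y$ affine. Because $c_i'$ is increasing, $\bar\lambda_i$ lies between $\lambda^{(1)}_i$ and $\lambda^{(2)}_i$, so $\bar\lambda\in[1/c_\lambda,c_\lambda]^d$ and $\bar\Lambda$ is admissible.

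\item \textbf{Comparison of the states.} The Brownian parts of $x^{\bar\lambda}$ and $y^{\bar\lambda}$ equal the $\theta$-convex combinations of those of the two original processes. The drifts satisfy two inequalities.
\begin{itemize}
\item For $x$: $\bar\lambda_i\ge\theta\lambda^{(1)}_i+(1-\theta)\lambda^{(2)}_i$. This holds because $c_i'$ is convex (as $c_i^{(3)}\ge0$), so $(c_i')^{-1}$ is concave.
\item For $y$: $c_i(\bar\lambda_i)\le\theta c_i(\lambda^{(1)}_i)+(1-\theta)c_i(\lambda^{(2)}_i)$. This is the weighted version of \eqref{general_mean_ineq}. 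With $f=c_i\circ(c_i')^{-1}$, it is $f(\theta a+(1-\theta)b)\le\theta f(a)+(1-\theta)f(b)$, which is exactly the convexity of $f$ proved in the Remark.
\end{itemize}
Integrating from $t$ to $T$ gives, pathwise,
\[
x^{\bar\lambda}(T)\ge\theta x^{(1)}(T)+(1-\theta)x^{(2)}(T)\ \text{(componentwise)},\qquad y^{\bar\lambda}(T)\le\theta y^{(1)}(T)+(1-\theta)y^{(2)}(T).
\]
Here the convex combination of the initial points is the starting point of the mixed process.

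\item \textbf{Concavity and monotonicity of the payoff.} The terminal payoff $g(x,y)=U_P(\ell(x)-U_A^{-1}(y))$ has three properties.
\begin{itemize}
\item It is nondecreasing in each $x_i$, since $\nabla\ell>0$ and $U_P'>0$.
\item It is nonincreasing in $y$, since $(U_A^{-1})'>0$.
\item It is jointly concave: $\ell-U_A^{-1}$ is concave because $\ell$ is concave and $U_A^{-1}$ is convex, and $U_P$ is concave and increasing.
\end{itemize}
Therefore
\[
g(x^{\bar\lambda}(T),y^{\bar\lambda}(T))\ge\theta g(x^{(1)}(T),y^{(1)}(T))+(1-\theta)g(x^{(2)}(T),y^{(2)}(T)).
\]
Taking expectations and letting $\varepsilon'\to0$ yields $w^b(\theta z_1+(1-\theta)z_2,t)\ge\theta w^b(z_1,t)+(1-\theta)w^b(z_2,t)$, where $z_1,z_2$ are the two initial points.
\end{itemize}

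The main obstacle is admissibility of $\bar\Lambda$. Optionality holds since $\bar\lambda$ is a continuous function of the two controls. Strong solvability is immediate because the coefficients do not depend on the state. The integrability conditions \eqref{assumption_lambda} are trivial since $\bar\lambda$ is bounded.

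A second point to check is measurability of the $\varepsilon'$-optimal selection and whether the filtrations coincide for different initial points. They do coincide: $\mathbb F^{\mathcal B,\mathcal W}$ does not depend on the initial point, so both controls live on the same filtered space.
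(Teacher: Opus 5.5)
Your proposal is correct and follows essentially the same argument as the paper: a mixed control defined by a convex combination of $\nabla c$, concavity of $(c_i')^{-1}$ for the $x$-drift, convexity of $c_i\circ(c_i')^{-1}$ (the content of \eqref{general_mean_ineq}) for the $y$-drift, and monotonicity plus joint concavity of the terminal payoff. The only difference is that you use a general weight $\theta$ where the paper uses the midpoint $\theta=1/2$. This gives full concavity directly, whereas the paper proves midpoint concavity and implicitly relies on the continuity of $w^b$ from Theorem \ref{thm_regularity_w^b} to upgrade it.
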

    
    {Step 2:} For any integer $1 \leq j \leq d$, we show that (a) $w^b_{x_j} > 0 > w^b_y$ and {$|\frac{w^b_y(x,y,t)}{w^b_{x_j}(x,y,t)}| \in [\frac{1}{c \cdot c_\ell},c \cdot c_\ell]$}, and (b) $w^b_{{x_j}{x_j}}$, $w^b_{yy}$, and $w^b_{{x_j}y}$ are bounded by $w^b_{x_j}$, {i.e., $w^b_{{x_j}{x_j}}(x,y,t) + \bar{c}_P w^b_{{x_j}}(x,y,t) \geq 0$, $w^b_{yy}(x,y,t) + c_H w^b_{{x_j}}(x,y,t) \geq 0$, and $\left|w^b_{{x_j}y}(x,y,t)\right| \leq c_H w^b_{x_j}(x,y,t)$ for some non-negative constants $\bar{c}_P$ and $c_H$}. 
    
    This step guarantees that for sufficiently large $\lambda_{f,i}$, the term $c'_i(\lambda_{f,i}) w_y$ dominates $w_{x_i}$ in the first-order condition \eqref{FOC_w^b_explicit}, while for sufficiently small $\lambda_{f,i}$, the opposite holds. The signs are used to yield a contradiction. Moreover, given (a), for sufficiently large $\lambda_{f,i}$, the second-order partial derivatives are dominated by $w_y$, whereas for sufficiently small $\lambda_{f,i}$, they are dominated by $w_{x_i}$. Therefore, the signs of $w_{x_i}$ and $w_y$ lead to the contradiction.

    Let $\Delta > 0$ and let $e_j \in \mathbb{R}^d$ denote the $j$-th standard basis vector. We define
    \begin{align} \label{finite_diff}
        p^{+}_{j,\Delta}(x,y,t) &= \frac{w^b(x+\Delta e_j,y,t)-w^b(x,y,t)}{\Delta},\\
        p^{-}_{j,\Delta}(x,y,t) &= \frac{w^b(x,y,t)-w^b(x-\Delta e_j,y,t)}{\Delta},\\
        q^{+}_\Delta(x,y,t) &= \frac{w^b(x,y+\Delta,t)-w^b(x,y,t)}{\Delta},\\
        q^{-}_\Delta(x,y,t) &= \frac{w^b(x,y,t)-w^b(x,y-\Delta,t)}{\Delta},\\
        s_{j,\Delta}(x,y,t) &= \frac{w^b(x+\Delta e_j,y,t)+w^b(x-\Delta e_j,y,t)-2w^b(x,y,t)}{\Delta^2},\\
        r_\Delta(x,y,t) &= \frac{w^b(x,y+\Delta,t)+w^b(x,y-\Delta,t)-2w^b(x,y,t)}{\Delta^2}.
    \end{align}
    It follows that 
    \begin{align*}
        w^b_{x_j}(x,y,t) &= \lim_{\Delta \rightarrow 0 }p^{-}_{j,\Delta}(x,y,t) = \lim_{\Delta \rightarrow 0 }p^{+}_{j,\Delta}(x,y,t),\\
        w^b_{y}(x,y,t) &= \lim_{\Delta \rightarrow 0 }q^{-}_\Delta(x,y,t) = \lim_{\Delta \rightarrow 0 }q^{+}_\Delta(x,y,t),\\
        w^b_{{x_j}{x_j}}(x,y,t) &= \lim_{\Delta \rightarrow 0 }s_{j,\Delta}(x,y,t),\\
        w^b_{yy}(x,y,t) &= \lim_{\Delta \rightarrow 0 }r_\Delta(x,y,t).
    \end{align*}

    We first show that $w^b_{x_j} > 0 > w^b_y$ for any integer $1 \leq j \leq d$. Since $\ell_{x_j}>0$, $(U_A^{-1})'>0$, and $U_P'>0$, evaluating the same optimal control yields
    \begin{align}
    w^b(x+\Delta e_j,y,t)&>w^b(x,y,t),\\
    w^b(x,y,t)&<w^b(x,y-\Delta,t).
    \end{align}
    Hence, by the concavity of $w^b$,
    \[
    w^b_{x_j}(x,y,t)\geq p^+_{j,\Delta}(x,y,t)>0>
    q^-_\Delta(x,y,t)\geq w^b_y(x,y,t).
    \]

    To proceed, we next establish the following lemma, which is crucial for the local uniform boundedness of $w^b$.
    {\begin{lemma} \label{lem_viscosity_partial_derivatives}
        For any integer $1 \leq j \leq d$, $p^{-}_{j,\Delta}(x,y,t)$, $q^{-}_\Delta(x,y,t)$ are sub-solutions, while $p^{+}_{j,\Delta}(x,y,t)$, $q^{+}_\Delta(x,y,t)$, $s_{j,\Delta}(x,y,t)$, $r_\Delta(x,y,t)$ are super-solutions of the PDE
        \begin{equation} \label{linear_pde_w^b}
        \begin{cases}
            H_t +
            \frac{1+\epsilon}{2} \Tr[\sigma \sigma^\top H_{xx}] + [\sigma \sigma^\top \nabla c(\lambda_b^*)]^\top H_{xy} + \frac{1}{2} \|\sigma^\top \nabla c(\lambda_b^*)\|^2 H_{yy} + (\lambda_b^*)^\top H_x + c(\lambda_b^*) H_y = 0,\\
            \sup\limits_{t \in [0,T], x \in \mathbb{R}^d, y \in \mathbb{R}} \frac{|H(x,y,t)|}{e^{D(\|x\|+|y|)}} < \infty, \text{ for some constant } D > 0,
            \end{cases}
        \end{equation}
        with terminal conditions
        \begin{align*}
            p^{+}_{j,\Delta}(x,y,T) &= \frac{U_P(\ell(x + \Delta e_j) - U_A^{-1}(y)) - U_P(\ell(x) - U_A^{-1}(y))}{\Delta}\\
            &= U_P'(\ell(x + \delta_1 e_j) - U_A^{-1}(y)) \ell_{x_j}(x + \delta_1 e_j) > 0,\\
            p^{-}_{j,\Delta}(x,y,T) &= \frac{U_P(\ell(x) - U_A^{-1}(y)) - U_P(\ell(x - \Delta e_j) - U_A^{-1}(y))}{\Delta} \\
            &= U_P'(\ell(x - \delta_2 e_j) - U_A^{-1}(y)) \ell_{x_j}(x - \delta_2 e_j),\\  
            q^{+}_\Delta(x,y,T) &= \frac{U_P(\ell(x) - U_A^{-1}(y + \Delta)) - U_P(\ell(x) - U_A^{-1}(y))}{\Delta} = -U_P'(\ell(x) - U_A^{-1}(\delta_3))(U_A^{-1})'(\delta_3),\\ 
            q^{-}_\Delta(x,y,T) &= \frac{U_P(\ell(x) - U_A^{-1}(y)) - U_P(\ell(x) - U_A^{-1}(y - \Delta))}{\Delta} = -U_P'(\ell(x) - U_A^{-1}(\delta_4))(U_A^{-1})'(\delta_4) < 0,\\ 
            s_{j,\Delta}(x,y,T) &= \frac{U_P(\ell(x + \Delta e_j) - U_A^{-1}(y)) + U_P(\ell(x - \Delta e_j) - U_A^{-1}(y)) - 2 U_P(\ell(x) - U_A^{-1}(y))}{\Delta^2}\\     &= U_P''(\ell(x + \delta_5 e_j) - U_A^{-1}(y)) \big(\ell_{x_j}(x + \delta_5 e_j)\big)^2 + U_P'(\ell(x + \delta_5 e_j) - U_A^{-1}(y)) \ell_{x_j x_j}(x + \delta_5 e_j),\\ 
            r_\Delta(x,y,T) &= \frac{U_P(\ell(x) - U_A^{-1}(y + \Delta)) + U_P(\ell(x) - U_A^{-1}(y - \Delta)) - 2 U_P(\ell(x) - U_A^{-1}(y))}{\Delta^2}\\     &= U_P''(\ell(x) - U_A^{-1}(\delta_6))\cdot \big((U_A^{-1})'(\delta_6)\big)^2 - U_P'(\ell(x) - U_A^{-1}(\delta_6))\cdot(U_A^{-1})''(\delta_6),
        \end{align*}
        for some $\delta_1, \delta_2 \in (0, \Delta)$, $\delta_3 \in (y,y+\Delta)$, $\delta_4 \in (y-\Delta,y)$, $\delta_5 \in (-\Delta,\Delta)$, $\delta_6 \in (y-\Delta,y+\Delta)$.
    \end{lemma}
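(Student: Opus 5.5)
We compare the finite differences against the HJB equation \eqref{pde_w_bdd_lamb}, using the fact that $\lambda^*_b(x,y,t)$ attains the supremum at $(x,y,t)$ and is admissible at every other point. Write
\[
\mathcal{L}^\lambda w := \tfrac{1+\epsilon}{2}\Tr[\sigma\sigma^\top w_{xx}] + [\sigma\sigma^\top\nabla c(\lambda)]^\top w_{xy} + \tfrac12\|\sigma^\top\nabla c(\lambda)\|^2 w_{yy} + \lambda^\top w_x + c(\lambda)w_y .
\]
By Theorem \ref{thm_regularity_w^b}, $w^b$ is $C^{2+\alpha,\frac{2+\alpha}{2}}_{loc}$. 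Hence every difference quotient in \eqref{finite_diff} is a classical $C^{2,1}$ function, and it suffices to verify the differential inequalities pointwise. The generator in \eqref{linear_pde_w^b} is $\partial_t + \mathcal{L}^{\lambda^*_b(x,y,t)}$. Since $\mathcal{L}^{\lambda}$ is linear in $w$ and the coefficients do not depend on $(x,y)$ for fixed $\lambda$, we have $\mathcal{L}^\lambda[w^b(\cdot+h,\cdot,t)](x,y,t)=(\mathcal{L}^\lambda w^b)(x+h,y,t)$ for translations.

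\textbf{Sign computations.} Fix $(x,y,t)$ and set $\lambda^*=\lambda^*_b(x,y,t)$. The equation gives $w^b_t+\mathcal{L}^{\lambda^*}w^b=0$ at $(x,y,t)$. At any shifted point, say $(x\pm\Delta e_j,y,t)$ or $(x,y\pm\Delta,t)$, the supremum only gives $w^b_t+\mathcal{L}^{\lambda^*}w^b\le 0$. Applying $\partial_t+\mathcal{L}^{\lambda^*}$ to each quotient:
\begin{itemize}
\item For $p^+_{j,\Delta}$, the result is $\frac1\Delta[(\le 0)-0]\le 0$, so $p^+_{j,\Delta}$ is a supersolution.
\item For $p^-_{j,\Delta}$, the result is $\frac1\Delta[0-(\le 0)]\ge0$, so $p^-_{j,\Delta}$ is a subsolution.
\item The same computation in the $y$ direction makes $q^+_\Delta$ a supersolution and $q^-_\Delta$ a subsolution.
\item For $s_{j,\Delta}$ and $r_\Delta$, the result is $\frac1{\Delta^2}[(\le0)+(\le0)-2\cdot 0]\le 0$, so both are supersolutions.
\end{itemize}
These are exactly the claimed inequalities. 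The same convention applies throughout: "supersolution" means $H_t+\mathcal{L}^{\lambda^*_b}H\le0$, consistent with the backward equation.

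\textbf{Terminal values and growth.} The terminal expressions follow by evaluating at $t=T$, where $w^b(\cdot,\cdot,T)=U_P(\ell(x)-U_A^{-1}(y))$, and applying the mean value theorem in one variable (the second-order mean value form for $s_{j,\Delta}$ and $r_\Delta$). The signs follow from Assumptions \ref{assumptionUA}, \ref{assumptionliquid} and \ref{assumptionUP}. The exponential growth bound in \eqref{linear_pde_w^b} is inherited directly from the growth bound of $w^b$ in \eqref{pde_w_bdd_lamb}, possibly with a larger $D$ and a $\Delta$-dependent constant.

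\textbf{Main obstacle.} The delicate point is regularity of the coefficient $\lambda^*_b(x,y,t)$, which may be merely measurable if the maximizer is non-unique. Uniqueness is only established later, in Proposition \ref{prop_unique_lambda_w^b}(1). Because the quotients are classical, the pointwise inequalities hold for any measurable selection of the maximizer and require no regularity of $\lambda^*_b$.

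The lemma is later used with a comparison principle, so if a viscosity formulation is required the coefficients need some continuity. I would first handle this by observing that $\lambda^*_b$ takes values in the compact set $[\frac1{c_\lambda},c_\lambda]^d$, so the coefficients are bounded. If needed, I would then pass to the Pucci-type bound $H_t+\sup_\lambda \mathcal{L}^\lambda H$ or $\inf_\lambda \mathcal{L}^\lambda H$ (bounded controls) to obtain robust sub- and supersolution statements.
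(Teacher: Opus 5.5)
Your proposal is correct and is essentially the paper's proof: the sign computation (equality at $(x,y,t)$ for the maximizer $\lambda^*_b(x,y,t)$, the inequality $\le 0$ at the shifted points, and translation invariance of $\mathcal{L}^\lambda$) is exactly Step 2 of the appendix proof. As you observe, no regularity of the selection $\lambda^*_b$ is needed because the quotients are classical, so you should drop the Pucci-type fallback, which would break the linear combinations $q^+_\Delta + C p^+_{j,\Delta}$ and $s_{j,\Delta}+Cp^+_{j,\Delta}$ and the comparisons within one fixed linear equation that the paper relies on next.

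The only real difference is the growth condition. The paper proves $\Delta$-uniform exponential bounds using $\eta$-optimal controls and exponential moments of $x^{\lambda^\eta}(T)$ and $y^{\lambda^\eta}(T)$. Your $\Delta$-dependent bound, read off directly from the exponential growth of $w^b$, already suffices, because every later comparison is made at fixed $\Delta$ with explicit terminal data.
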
}
    \begin{remark} \label{rmk_CP}
        In what follows, we repeatedly use comparison principles for \eqref{pde_w_bdd_lamb}, including its counterpart with $\epsilon=0$, the linear PDE \eqref{linear_pde_w^b}, and the bounded-cylinder problem \eqref{pde_w_bdd_lamb_local}. The proof of the comparison principle in Appendix F of Chen et al. \cite{chen2025optimal} can be adapted to all these settings: the exponential barrier handles the unbounded-domain equations with bounded coefficients and solutions of at most exponential growth, while no barrier at spatial infinity is needed on bounded cylinders.
    \end{remark}

Before we compare $w^b_{x_j}(x,y,t)$ and $w^b_y(x,y,t)$, we define $\mu(\eta) := U_P(z+\eta)$, where $z = \ell(x)-U_A^{-1}(y)$ and $\eta \in \mathbb{R}$. This function is crucial to close Step 2. It follows from Assumption \ref{assumptionUP} that
\begin{align}
    &U_P''(z+\eta) + c_P U_P'(z+\eta) \geq 0\\
    \Rightarrow \  &e^{c_P\eta} U_P''(z+\eta) + c_P e^{c_P\eta} U_P'(z+\eta) \geq 0\\
    \Rightarrow \ &\left(e^{c_P\eta} U_P'(z+\eta)\right)' \geq 0\\
    \Rightarrow \ &\left(e^{c_P\eta} \mu'(\eta)\right)' \geq 0. \label{monotone_mu}
\end{align}
Hence, $e^{c_P\eta} \mu'(\eta)$ is non-decreasing on $\eta \in \mathbb{R}$.

By Lemma \ref{lem_viscosity_partial_derivatives}, we have
\begin{align}     
p^{+}_{j,\Delta}(x,y,T) &= U_P'(\ell(x + \delta_1 e_j)- U_A^{-1}(y)) \ell_{x_j}(x + \delta_1 e_j) \\     
&= U_P'(\ell(x)- U_A^{-1}(y)  + \ell(x + \delta_1 e_j) - \ell(x)) \ell_{x_j}(x + \delta_1 e_j) \\     
&= \mu'(\ell(x + \delta_1 e_j) - \ell(x)) \ell_{x_j}(x + \delta_1 e_j). \end{align}
Let $\Delta_\ell = \ell(x + \delta_1 e_j) - \ell(x)$. By the Mean Value Theorem, there exists some $\delta_1' \in (0, \delta_1)$ such that $\Delta_\ell = \ell_{x_j}(x + \delta_1' e_j) \delta_1$. Since $\delta_1 \in (0,\Delta)$, it follows from Assumption \ref{assumptionliquid} ($\ell_{x_j}(x) \leq c_\ell$) that $0 < \Delta_\ell < c_\ell \Delta$. Furthermore, Assumption \ref{assumptionliquid} guarantees $\ell_{x_j}(x) \geq 1/c_\ell$, meaning $1/\ell_{x_j}(x) \leq c_\ell$. It follows from the monotonicity of $e^{c_Pt} \mu'(t)$ that
\begin{align}
e^{c_P(-c\Delta)} \mu'(-c\Delta) &\leq e^{c_P\Delta_\ell} \mu'(\Delta_\ell) \\  
&< e^{c_P c_\ell \Delta} \mu'(\Delta_\ell) \\  
&= e^{c_P c_\ell \Delta} \frac{p^{+}_{j,\Delta}(x,y,T)}{\ell_{x_j}(x + \delta_1 e_j)} \\  
&\leq c_\ell e^{c_P c_\ell \Delta} p^{+}_{j,\Delta}(x,y,T),\\  \Rightarrow \ \mu'(-c\Delta) &\leq c_\ell e^{c_P(c + c_\ell)\Delta} p^{+}_{j,\Delta}(x,y,T). 
\end{align}
Additionally, since $\delta_3 \in (y,y+\Delta)$, we have $-\Delta < y-\delta_3 < 0$. It follows from the Mean Value Theorem and Assumption \ref{assumptionUA} that
\begin{align}
U_A^{-1}(y) - U_A^{-1}(\delta_3) &= (U_A^{-1})'(\delta_7) (y-\delta_3) > -c \Delta
\end{align}
for some $\delta_7 \in (y,\delta_3)$. Moreover, we have
\begin{align}
q^{+}_\Delta(x,y,T) &= -U_P'(\ell(x) - U_A^{-1}(\delta_3))(U_A^{-1})'(\delta_3)\\     
&> -c U_P'(\ell(x) - U_A^{-1}(y) + U_A^{-1}(y) -U_A^{-1}(\delta_3))\\    &\geq -c U_P'(\ell(x) - U_A^{-1}(y) -c\Delta)\\     
&= -c \mu'(-c\Delta)\\     
&\geq -c \cdot c_\ell e^{c_P(c + c_\ell)\Delta} p^{+}_{j,\Delta}(x,y,T). 
\end{align}
Therefore, we obtain
\begin{align} 
q^{+}_\Delta(x,y,T) + c \cdot c_\ell e^{c_P(c + c_\ell)\Delta} p^{+}_{j,\Delta}(x,y,T) > 0. 
\end{align}
It follows from the comparison principle for PDE \eqref{linear_pde_w^b} that
\begin{align}
q^{+}_\Delta(x,y,t) + c \cdot c_\ell e^{c_P(c + c_\ell)\Delta}  p^{+}_{j,\Delta}(x,y,t) \geq 0. 
\end{align}
Taking the limit as $\Delta \rightarrow 0$, we have
\begin{align}
w^b_y(x,y,t) + c \cdot c_\ell w^b_{x_j}(x,y,t) \geq 0 \Rightarrow \frac{w^b_y(x,y,t)}{w^b_{x_j}(x,y,t)} \geq -c \cdot c_\ell, \quad \forall (x,y,t) \in \mathbb{R}^d \times \mathbb{R} \times [0,T]. 
\end{align}
Similarly, by Lemma \ref{lem_viscosity_partial_derivatives}, we have
\begin{align}
p^{-}_{j,\Delta}(x,y,T) &= U_P'(\ell(x - \delta_2 e_j) - U_A^{-1}(y)) \ell_{x_j}(x - \delta_2 e_j)\\
&= U_P'(\ell(x) - U_A^{-1}(y) + \ell(x - \delta_2 e_j) - \ell(x)) \ell_{x_j}(x - \delta_2 e_j) \\
&= \mu'(\ell(x - \delta_2 e_j) - \ell(x)) \ell_{x_j}(x - \delta_2 e_j). \end{align}
Let $\Delta_\ell = \ell(x - \delta_2 e_j) - \ell(x)$. By the Mean Value Theorem, there exists some $\delta_2' \in (0, \delta_2)$ such that $\Delta_\ell = -\ell_{x_j}(x - \delta_2' e_j) \delta_2$. Since $\delta_2 \in (0,\Delta)$, it follows from Assumption \ref{assumptionliquid} ($\ell_{x_j}(x) \leq c_\ell$) that $-c_\ell \Delta < \Delta_\ell < 0$. Furthermore, Assumption \ref{assumptionliquid} guarantees $\ell_{x_j}(x) \leq c_\ell$, which implies $\mu'(\Delta_\ell) = \frac{p^{-}_{j,\Delta}(x,y,T)}{\ell_{x_j}(x - \delta_2 e_j)} \geq \frac{1}{c_\ell} p^{-}_{j,\Delta}(x,y,T)$. Therefore, it follows from the monotonicity of $e^{c_Pt} \mu'(t)$ that
\begin{align}
e^{c_P(c\Delta)} \mu'(c\Delta) &\geq e^{c_P\Delta_\ell} \mu'(\Delta_\ell)\\
&> e^{-c_P c_\ell \Delta} \mu'(\Delta_\ell)\\
&\geq \frac{1}{c_\ell} e^{-c_P c_\ell \Delta} p^{-}_{j,\Delta}(x,y,T)\\
\Rightarrow \ \mu'(c\Delta) &\geq \frac{1}{c_\ell} e^{-c_P(c + c_\ell)\Delta} p^{-}_{j,\Delta}(x,y,T). 
\end{align}
Additionally, since $\delta_4 \in (y-\Delta,y)$, we have $0 < y-\delta_4 < \Delta$. It follows from the Mean Value Theorem and Assumption \ref{assumptionUA} that
\begin{align}
U_A^{-1}(y) - U_A^{-1}(\delta_4) &= (U_A^{-1})'(\delta_8) (y-\delta_4) < c \Delta
\end{align}
for some $\delta_8 \in (\delta_4,y)$.
Moreover, we have
\begin{align}
q^{-}_\Delta(x,y,T) &= -U_P'(\ell(x) - U_A^{-1}(\delta_4))(U_A^{-1})'(\delta_4)\\
&< -\frac{1}{c} U_P'(\ell(x) - U_A^{-1}(y) + U_A^{-1}(y) -U_A^{-1}(\delta_4))\\
&\leq -\frac{1}{c} U_P'(\ell(x) - U_A^{-1}(y) + c\Delta)\\
&= -\frac{1}{c} \mu'(c\Delta)\\
&\leq -\frac{1}{c \cdot c_\ell} e^{-c_P(c + c_\ell)\Delta} p^{-}_{j,\Delta}(x,y,T). 
\end{align}
Therefore,
\begin{align}
q^{-}_\Delta(x,y,T) + \frac{1}{c \cdot c_\ell} e^{-c_P(c+c_\ell)\Delta} p^{-}_{j,\Delta}(x,y,T) < 0.
\end{align}
It follows from the comparison principle for PDE \eqref{linear_pde_w^b} that
\begin{align}
    q^{-}_\Delta(x,y,t) + \frac{1}{c \cdot c_\ell} e^{-c_P(c+c_\ell)\Delta} p^{-}_{j,\Delta}(x,y,t) \leq 0.
\end{align}
Taking the limit as $\Delta \rightarrow 0$, we have
\begin{align} \label{compare_w_x_w_y_upper}
    w^b_y(x,y,t) + \frac{1}{c \cdot c_\ell} w^b_{x_j}(x,y,t) \leq 0 \Rightarrow \frac{w^b_y(x,y,t)}{w^b_{x_j}(x,y,t)} \leq -\frac{1}{c \cdot c_\ell} \quad \forall (x,y,t) \in \mathbb{R}^d \times \mathbb{R} \times [0,T].
\end{align}
Thus, $\left|\frac{w^b_y(x,y,t)}{w^b_{x_j}(x,y,t)}\right| \in {[\frac{1}{c \cdot c_\ell},c \cdot c_\ell]}$.

To compare the second-order partial derivatives with $w^b_{x_j}(x,y,t)$,  we define $\tilde{\mu}(s) := U_P(\ell(x+s e_j) - U_A^{-1}(y))$ for $s \in \mathbb{R}$. 
To establish the monotonicity of $\tilde{\mu}'$, we compute its derivatives:
\begin{align}
\tilde{\mu}'(s) &= U_P'(z_s) \ell_{x_j}(x+s e_j), \\     \tilde{\mu}''(s) &= U_P''(z_s) \big(\ell_{x_j}(x+s e_j)\big)^2 + U_P'(z_s) \ell_{x_j x_j}(x+s e_j), 
\end{align}
where $z_s = \ell(x+s e_j) - U_A^{-1}(y)$. By Assumption \ref{assumptionliquid}, we have $1/c_\ell \leq \ell_{x_j} \leq c_\ell$ and $-c_\ell \leq \ell_{x_j x_j} \leq 0$. It follows from Assumption \ref{assumptionUP} that we can define a sufficiently large constant $\bar{c}_P > 0$ such that
\begin{align}
\tilde{\mu}''(s) + \bar{c}_P \tilde{\mu}'(s) &= U_P''(z_s) \ell_{x_j}^2 + U_P'(z_s) \ell_{x_j x_j} + \bar{c}_P U_P'(z_s) \ell_{x_j}\\
&\geq -c_P U_P'(z_s) c_\ell^2 - c_\ell U_P'(z_s) + \bar{c}_P U_P'(z_s) \left(\frac{1}{c_\ell}\right)\\
&= U_P'(z_s) \left[ \frac{\bar{c}_P}{c_\ell} - c_P c_\ell^2 - c_\ell \right] \geq 0. 
\end{align}
This implies that $\left(e^{\bar{c}_P s} \tilde{\mu}'(s)\right)' \geq 0$, meaning $e^{\bar{c}_P s} \tilde{\mu}'(s)$ is non-decreasing on $s \in \mathbb{R}$. It follows that
for any $0 \leq a \leq \Delta$, we have
\begin{align}
e^{\bar{c}_P a} \tilde{\mu}'(a) \geq e^{\bar{c}_P(a-\Delta)} \tilde{\mu}'(a-\Delta) \Rightarrow e^{\bar{c}_P\Delta} \tilde{\mu}'(a) \geq \tilde{\mu}'(a-\Delta). 
\end{align}

Notice that we have
\begin{align}
p^{+}_{j,\Delta}(x,y,T) &= \frac{\tilde{\mu}(\Delta)-\tilde{\mu}(0)}{\Delta} = \frac{1}{\Delta}\int_0^\Delta \tilde{\mu}'(a) da,\\
s_{j,\Delta}(x,y,T) &= \frac{\tilde{\mu}(\Delta)+\tilde{\mu}(-\Delta)-2\tilde{\mu}(0)}{\Delta^2}\\
&= \frac{\tilde{\mu}(\Delta)-\tilde{\mu}(0)}{\Delta^2} - \frac{\tilde{\mu}(0)-\tilde{\mu}(-\Delta)}{\Delta^2}\\
&= \frac{1}{\Delta^2} \left(\int_0^\Delta \tilde{\mu}'(a) da - \int_{-\Delta}^0 \tilde{\mu}'(b) db\right)\\
&=\frac{1}{\Delta^2} \int_0^\Delta \left(\tilde{\mu}'(a) - \tilde{\mu}'(a-\Delta)\right) da. 
\end{align}
Therefore, we have
\begin{align}
s_{j,\Delta}(x,y,T) &= \frac{1}{\Delta^2} \int_0^\Delta \left(\tilde{\mu}'(a) - \tilde{\mu}'(a-\Delta)\right) da\\
&\geq \frac{1}{\Delta^2} (1-e^{\bar{c}_P\Delta}) \int_0^\Delta \tilde{\mu}'(a) da\\
&= \frac{1-e^{\bar{c}_P\Delta}}{\Delta} \frac{\tilde{\mu}(\Delta)-\tilde{\mu}(0)}{\Delta}\\
&=\frac{1-e^{\bar{c}_P\Delta}}{\Delta} p^{+}_{j,\Delta}(x,y,T).
\end{align}
By the comparison principle for PDE \eqref{linear_pde_w^b}, we have
$$
s_{j,\Delta}(x,y,t) + \frac{e^{\bar{c}_P\Delta}-1}{\Delta} p^{+}_{j,\Delta}(x,y,t) \geq 0.
$$
Taking the limit as $\Delta \rightarrow 0$, we conclude by L'H\^opital's rule that
\begin{align} \label{compare_w_xx_w_x}
    w^b_{x_jx_j}(x,y,t) + \bar{c}_P w^b_{x_j}(x,y,t) \geq 0 \quad \forall (x,y,t) \in \mathbb{R}^d \times \mathbb{R} \times [0,T].
\end{align}

Let $c_H := c^2c_P+c > c_P$. It follows from Lemma \ref{lem_viscosity_partial_derivatives} that
\begin{align}
    r_\Delta(x,y,T) &= U_P''(\ell(x)- U_A^{-1}(\delta_6)) \cdot \big((U_A^{-1})'(\delta_6)\big)^2 - U_P'(\ell(x)- U_A^{-1}(\delta_6))\cdot(U_A^{-1})''(\delta_6)\\
    &\geq -c_H \cdot U_P'(\ell(x) - U_A^{-1}(\delta_6)).
\end{align}
Since $\delta_6 \in (y-\Delta,y+\Delta)$, we have $|y-\delta_6| < \Delta$.
It follows from the Mean Value Theorem and Assumption \ref{assumptionUA} that
\begin{align}
    U_A^{-1}(y) - U_A^{-1}(\delta_6) &= (U_A^{-1})'(\delta_9) (y-\delta_6) > -c \Delta
\end{align}
for some $\delta_9$ lying between $y$ and $\delta_6$. Therefore, we have
\begin{align}
    r_\Delta(x,y,T) &\geq -c_H \cdot U_P'(\ell(x)- U_A^{-1}(\delta_6)) \geq -c_H \cdot U_P'(\ell(x)- U_A^{-1}(y)-c\Delta) = -c_H \cdot \mu'(-c\Delta).
\end{align}
Moreover, for any $a \geq 0$, it follows from \eqref{monotone_mu} that
\begin{align}
 e^{c_Pa} \mu'(a) \geq e^{c_P(-c\Delta)} \mu'(-c\Delta) \Rightarrow \mu'(a) \geq e^{-c_P(a+c\Delta)} \mu'(-c\Delta).
\end{align}
Denote $\Delta_\ell = \ell(x+\Delta e_j) - \ell(x)$. It follows from the Mean Value Theorem and Assumption \ref{assumptionliquid} ($\ell_{x_j}(x) \leq c_\ell$) that $\frac{\Delta}{c_\ell} \leq \Delta_\ell \leq c_\ell \Delta$.
Therefore, we have
\begin{align}
    p^{+}_{j,\Delta}(x,y,T) = \frac{1}{\Delta}\int_0^{\Delta_\ell} \mu'(a) da &\geq \frac{1}{c_\ell \Delta_\ell}\int_0^{\Delta_\ell} e^{-c_P(a+c\Delta)} \mu'(-c\Delta) da\\
    &\geq e^{-c_P\Delta(c_\ell+c)} \frac{1}{c_\ell \Delta_\ell} \int_0^{\Delta_\ell} \mu'(-c\Delta) da\\
    &= e^{-c_P\Delta(c_\ell+c)} \frac{1}{c_\ell} \mu'(-c\Delta)\\
    &\geq e^{-c_P\Delta(c_\ell+c)} \cdot \frac{1}{c_\ell} \cdot \frac{-1}{c_H} \cdot r_\Delta(x,y,T).
\end{align}
It follows from the comparison principle for PDE \eqref{linear_pde_w^b} that
$$
r_\Delta(x,y,t) + c_\ell c_H e^{c_P\Delta(c_\ell+c)} p^{+}_{j,\Delta}(x,y,t) \geq 0.
$$
Taking the limit as $\Delta \rightarrow 0$, we conclude that
$$
w^b_{yy}(x,y,t) + c_\ell c_H w^b_{x_j}(x,y,t) \geq 0 \quad \forall (x,y,t) \in \mathbb{R}^d \times \mathbb{R} \times [0,T].
$$ 
Therefore,
\begin{align}
    \left|w^b_{x_jy}(x,y,t)\right| \leq \sqrt{w^b_{x_jx_j}(x,y,t)w^b_{yy}(x,y,t)} \leq c_H w^b_{x_j}(x,y,t),
\end{align}
where, without loss of generality, we redefine $c_H = \max \{\bar{c}_P, c_H c_\ell\}$.

{Step 3:} We show that the first-order condition has at most one
solution on $[0,\infty)^d$, and that any such solution is positive. This is guaranteed by the signs of the partial derivatives.
    
    For any integer $1 \leq i \leq d$, denote
    $\psi_i(\lambda_i) = \sigma_i^2 c''_i(\lambda_i) w^b_{x_iy} + \sigma_i^2 c''_i(\lambda_i) c'_i(\lambda_i) w^b_{yy} + w^b_{x_i} + c'_i(\lambda_i) w^b_y$. It follows immediately from Theorem \ref{thm_regularity_w^b} that $\psi_i(\lambda_i)$ is continuous and differentiable.
    Then
    $$
    \psi'_i(\lambda_i) = \sigma_i^2 c^{(3)}_i(\lambda_i) [w^b_{x_iy} + c'_i(\lambda_i)w^b_{yy}]+ \sigma_i^2 \left(c''_i(\lambda_i)\right)^2 w^b_{yy} + c''_i(\lambda_i) w^b_y.
    $$
    Let $\lambda_{f,i}$ be any zero of $\psi_i$ on $[0,\infty)$, i.e., $\psi_i(\lambda_{f,i}) = 0$. Since \(\psi_i(0)= w_{x_i}^b>0\), we have $\lambda_{f,i} > 0$ and $c''_i(\lambda_{f,i}) > 0$. It follows from F.O.C. \eqref{FOC_w^b_explicit} that
    \begin{align*}
        \psi'_i(\lambda_{f,i}) &= -\frac{c^{(3)}_i(\lambda_{f,i})}{c''_i(\lambda_{f,i})} [w^b_{x_i} + c'_i(\lambda_{f,i}) w^b_y] + \sigma_i^2 \big(c''_i(\lambda_{f,i})\big)^2 w^b_{yy} + c''_i(\lambda_{f,i}) w^b_y\\
        &= w^b_y \bigg\{-\frac{c^{(3)}_i(\lambda_{f,i}) c'_i(\lambda_{f,i})}{c''_i(\lambda_{f,i})} + c''_i(\lambda_{f,i})\bigg\} + \sigma_i^2 \big(c''_i(\lambda_{f,i})\big)^2 w^b_{yy} -\frac{c^{(3)}_i(\lambda_{f,i})}{c''_i(\lambda_{f,i})} w^b_{x_i}\\
        & < 0,
    \end{align*}
    where the last inequality follows from Assumption \ref{assumption_cost}, Step 2 and Lemma \ref{lemma_sec_derivative_w^b}.

    Now suppose there exist $\lambda^{(1)}_i$ and $\lambda^{(2)}_i$ such that $\lambda^{(1)}_i < \lambda^{(2)}_i$ and $\psi_i(\lambda^{(1)}_i) = \psi_i(\lambda^{(2)}_i) = 0$. Since $\psi_i'(\lambda_i)<0$ at every zero, all zeros of $\psi_i$ are isolated. Hence, we may choose $\lambda_i^{(1)}$ and $\lambda_i^{(2)}$ to be two consecutive zeros. By continuity, $\psi_i$ has a constant sign on $(\lambda_i^{(1)},\lambda_i^{(2)})$.
    Since $\psi'_i(\lambda^{(1)}_i) < 0$, for every $\epsilon_0 > 0$, there exists $\lambda^{(c)}_i \in (\lambda^{(1)}_i,\lambda^{(1)}_i+\epsilon_0)$ such that $\psi_i(\lambda^{(c)}_i) < 0$. Hence, $\forall \lambda_i \in (\lambda^{(1)}_i,\lambda^{(2)}_i)$, we have $\psi_i(\lambda_i) < 0$, which contradicts $\psi'_i(\lambda^{(2)}_i) < 0$.

    Therefore, the first-order condition has at most one
    solution on $[0,\infty)^d$, and any such solution is positive.

{Step 4:} We establish the existence and uniform boundedness of $\lambda_f$, i.e., $\lambda_f \in [\frac{1}{\tilde{c}_\lambda},\tilde{c}_\lambda]^d$ for some constant $\tilde{c}_\lambda > 1$, independent of $c_\lambda$ and $\epsilon$. Since \(\psi_i(0) = w_{x_i}^b>0\), the existence follows from the Intermediate Value Theorem by showing that $\psi_i(\lambda_i) < 0$ when $\lambda_i$ is large enough. To show the uniform boundedness, we show $\lambda_{f,i} \in [\frac{1}{\tilde{c}_\lambda},\tilde{c}_\lambda]$ for any integer $i$ such that $1 \leq i \leq d$.
Therefore, together with Steps 2 and 3, it follows that $\lambda^*_b$ is unique and positive. Moreover, $\lambda^*_b \in [\frac{1}{\tilde{c}_\lambda},\tilde{c}_\lambda]^d,\quad \forall c_\lambda > 1, 0 < \epsilon < 1$.

We first show the existence of $\lambda_f$. Let $\bar{c}_i$ be the smallest constant satisfying the following two conditions: 
$\frac{c_i'(\bar{c}_i)}{c_i''(\bar{c}_i)} \geq 2 \sigma_i^2 c c_H c_\ell$ and $c_i'(\bar{c}_i) \geq 2 c c_\ell$. Notice that $\bar{c}_i$ is independent of $c_\lambda$ and $\epsilon$ and the existence of $\bar{c}_i$ is guaranteed by Assumption \ref{assumption_cost}. Moreover, Assumption \ref{assumption_cost} guarantees that $\frac{c_i'(\bar{c}_i+1)}{c_i''(\bar{c}_i+1)} \geq 2 \sigma_i^2 c c_H c_\ell$ and $c_i'(\bar{c}_i+1) > 2 c c_\ell$. Then it follows from Steps 2 and 3 that
\begin{align}
\sigma_i^2 c''_i(\bar{c}_i+1) w^b_{x_iy} + \frac{c'_i(\bar{c}_i+1)}{2}w^b_y &\leq \sigma_i^2 c''_i(\bar{c}_i+1)c_H w^b_{x_i} + \frac{c'_i(\bar{c}_i+1)}{2}w^b_y\\
&\leq \sigma_i^2 c''_i(\bar{c}_i+1)c_H\left(-c c_\ell w^b_y\right) + \frac{c'_i(\bar{c}_i+1)}{2}w^b_y\\
&\leq \left(-\sigma_i^2 c c_H c_\ell c''_i(\bar{c}_i+1) + \frac{c'_i(\bar{c}_i+1)}{2}\right)w^b_y\\
& \leq 0.
\end{align}
Moreover,
\begin{align}
w^b_{x_i} + \frac{c'_i(\bar{c}_i+1)}{2}w^b_y &= w^b_{x_i} + \frac{c'_i(\bar{c}_i+1)}{2}w^b_y\\
&\leq-c c_\ell w^b_y + \frac{c'_i(\bar{c}_i+1)}{2}w^b_y\\
&= \left(-c c_\ell + \frac{c'_i(\bar{c}_i+1)}{2}\right)w^b_y\\
&< 0.
\end{align}
Thus, together with $w^b_{yy} \leq 0$, we have
$$
\psi_i(\bar{c}_i+1) = \sigma_i^2 c''_i(\bar{c}_i+1) w^b_{x_iy} + \sigma_i^2 c''_i(\bar{c}_i+1) c'_i(\bar{c}_i+1) w^b_{yy} + w^b_{x_i} + c'_i(\bar{c}_i+1) w^b_y < 0.
$$
Since \(\psi_i(0)= w_{x_i}^b>0\), the existence follows immediately from the Intermediate Value Theorem.

To show the uniform boundedness of $\lambda_f$, on the one hand, suppose there exists $ x_0 \in \mathbb{R}^d, y_0 \in \mathbb{R}, t_0 \in [0,T]$ such that $\lambda_{f,i}(x_0,y_0,t_0) > \bar{c}_i$, 
Then we have
$\frac{c_i'(\lambda_{f,i})}{c_i''(\lambda_{f,i})} \geq 2 \sigma_i^2 c c_H c_\ell $ and $c_i'(\lambda_{f,i}) > 2 c c_\ell$.
A similar argument yields
$$
\psi_i(\lambda_{f,i}) = \sigma_i^2 c''_i(\lambda_{f,i}) w^b_{x_iy} + \sigma_i^2 c''_i(\lambda_{f,i}) c'_i(\lambda_{f,i}) w^b_{yy} + w^b_{x_i} + c'_i(\lambda_{f,i}) w^b_y < 0,
$$
which leads to a contradiction.

On the other hand, suppose there exists $ x_0 \in \mathbb{R}^d, y_0 \in \mathbb{R}, t_0 \in [0,T]$ such that $\lambda_{f,i}(x_0,y_0,t_0) < \frac{1}{\hat{c}_i}$, where $\hat{c}_i$ is the minimum to satisfy these 2 conditions:
$\sigma_i^2 \big(1 + c_i'(\frac{1}{\hat{c}_i})\big) c''_i(\frac{1}{\hat{c}_i}) \leq \frac{1}{2 c_H}$ and $c'_i(\frac{1}{\hat{c}_i}) \leq \frac{1}{2c c_\ell}$. Notice that $\hat{c}_i$ is independent of $c_\lambda$ and $\epsilon$. Then we have $\sigma_i^2 \big(1 + c_i'(\lambda_{f,i})\big) c''_i(\lambda_{f,i}) < \frac{1}{2 c_H}$ and $c'_i(\lambda_{f,i}) < \frac{1}{2c c_\ell}$.
It follows that
\begin{align}
\sigma_i^2 c''_i(\lambda_{f,i})w^b_{x_iy} + \sigma_i^2 c''_i(\lambda_{f,i})c'_i(\lambda_{f,i})w^b_{yy} &\geq -\sigma_i^2 c''_i(\lambda_{f,i}) c_H w^b_{x_i} - \sigma_i^2 c''_i(\lambda_{f,i}) c'_i(\lambda_{f,i}) c_H w^b_{x_i}\\
&= - \left(1 + c'_i(\lambda_{f,i}) \right)  c''_i(\lambda_{f,i}) c_H \sigma_i^2 w^b_{x_i}\\
&> - \frac{w^b_{x_i}}{2}.
\end{align}
Additionally, 
we have
\begin{align}
\frac{w^b_{x_i}}{2} + c'_i(\lambda_{f,i}) w^b_y \geq \left(\frac{1}{2} -c c_\ell c'_i(\lambda_{f,i}) \right) w^b_{x_i} > 0.
\end{align}
Thus, 
$$
\psi_i(\lambda_{f,i}) = \sigma_i^2 c''_i(\lambda_{f,i})w^b_{x_iy} + \sigma_i^2 c''_i(\lambda_{f,i}) c'_i(\lambda_{f,i}) w^b_{yy} + w^b_{x_i} + c'_i(\lambda_{f,i}) w^b_y > 0,
$$
which leads to a contradiction.

Therefore, taking $\tilde{c}_i = \max\{\bar{c}_i,\hat{c}_i\}$, for every $c_\lambda > 1, \, 0 < \epsilon < 1$, we obtain
$$
\frac{1}{\tilde{c}_i} \leq \lambda_{f,i}(x,y,t) \leq \tilde{c}_i, \quad \forall x \in \mathbb{R}^d, y \in \mathbb{R}, t \in [0,T].
$$

When $c_\lambda < \tilde{c}_i$, $ \frac{1}{\tilde{c}_i} < \frac{1}{c_\lambda} \leq \lambda^*_{b,i} \leq c_\lambda < \tilde{c}_i$; when $c_\lambda \geq \tilde{c}_i$, $\frac{1}{\tilde{c}_i} < \lambda^*_{b,i} = \lambda_{f,i} < \tilde{c}_i$. The uniqueness and positivity of $\lambda^*_b$ follow immediately from Step 3.

Taking $\tilde{c}_\lambda := \max_{1 \leq i \leq d} \tilde{c}_i$, we conclude that $\lambda_f \in [\frac{1}{\tilde{c}_\lambda},\tilde{c}_\lambda]^d$ and $\lambda^*_b \in [\frac{1}{\tilde{c}_\lambda},\tilde{c}_\lambda]^d$.

\textbf{Proof of (2):}
Fix $(x_0,y_0) \in \mathbb{R}^d \times \mathbb{R}, \ R > 0$, and consider the cylinder $Q = B_R(x_0,y_0) \times (0,T)$ where $B_R(x_0,y_0) := \{(x,y)|\|x-x_0\|^2+(y-y_0)^2 < R^2\}$.
Given the growth condition in PDE \eqref{pde_w_bdd_lamb}, we consider the function
$$
\phi(x,y,t) := c_\phi \left( \prod_{j=1}^d \left(e^{2D x_j} + e^{-2D x_j}\right) \right) \left(e^{2D y} + e^{-2D y}\right) e^{\beta(T-t)},
$$
where $c_\phi > 0$ is chosen sufficiently large, independently of $\epsilon$ and $c_\lambda$, so that $|w^b(x,y,T)| \leq \phi(x,y,T)$. By part (1), when $c_\lambda>\tilde{c}_\lambda$, the cutoff
$c_\lambda$ in \eqref{pde_w_bdd_lamb} can be replaced by
$\tilde{c}_\lambda$ without changing $w^b$, while when
$c_\lambda\leq\tilde{c}_\lambda$, the control set is already
uniformly bounded. Therefore, together with $\epsilon\in(0,1)$, it is easy to verify that, by choosing $\beta>0$ sufficiently large, $\phi$ and $-\phi$ are respectively a supersolution and a subsolution of the PDE \eqref{pde_w_bdd_lamb} with $c_\lambda = \tilde{c}_\lambda$ whenever $c_\lambda>\tilde{c}_\lambda$. Moreover, $\beta$ can be chosen independently of $c_\lambda$ and $\epsilon$, which is crucial for deriving the uniform bound below.
Furthermore, noting that $-\phi(x,y,T) \leq w^b(x,y,T) \leq \phi(x,y,T)$, it follows from the comparison principle for the corresponding PDE \eqref{pde_w_bdd_lamb} that 
$$
-\phi(x,y,t) \leq w^b(x,y,t) \leq \phi(x,y,t), \quad \forall (x,y,t) \in \mathbb{R}^d \times \mathbb{R} \times [0,T].
$$
Consequently, $w^b$ is bounded in $\overline{Q} = \overline{B_R(x_0,y_0)} \times [0,T]$. Additionally, this local bound is independent of $\epsilon$ and $c_\lambda$ because $\beta$ and $c_\phi$ are independent of $\epsilon$ and $c_\lambda$.

For the partial derivatives, it follows from Lemma \ref{lem_viscosity_partial_derivatives} that $p^-_{j,\Delta}(x,y,t)$ are sub-solutions, while $p^+_{j,\Delta}(x,y,t)$ are super-solutions of the PDE \eqref{linear_pde_w^b}. 
Additionally, given the growth condition in PDE \eqref{linear_pde_w^b} and $0 < \Delta < 1$, we may increase $c_\phi$, if necessary, so that $p^-_{j,\Delta}(x,y,T) \leq \phi(x,y,T)$, while $p^+_{j,\Delta}(x,y,T) > 0$, for every $j\in\{1,2,\ldots,d\}$. 
Therefore, the comparison principle 
for PDE \eqref{linear_pde_w^b} (see Remark \ref{rmk_CP}), together with the concavity of $w^b(x,y,t)$, yields
\begin{align}
0 < p^+_{j,\Delta}(x,y,t) \leq p^-_{j,\Delta}(x,y,t) \leq \phi(x,y,t), \quad \forall (x,y,t) \in \mathbb{R}^d \times \mathbb{R} \times [0,T], \quad \forall j \in \{1,2,\dots,d\}.
\end{align}
Consequently, $p^+_{j,\Delta}(x,y,t)$ and $p^-_{j,\Delta}(x,y,t)$ are both bounded in $\overline{Q}$ and the bound is independent of $\epsilon$ and $c_\lambda$. Additionally, it follows from $w^b_{x_j}(x,y,t) = \lim_{\Delta \rightarrow 0}p^+_{j,\Delta}(x,y,t) = \lim_{\Delta \rightarrow 0}p^-_{j,\Delta}(x,y,t)$ that $w^b_{x_j}(x,y,t)$ is bounded in $\overline{Q}$, where the bound is independent of $\epsilon$ and $c_\lambda$. Notice that this argument works for any integer $j$ satisfying $1 \leq j \leq d$. Therefore, $w^b_{x}(x,y,t)$ is bounded in $\overline{Q}$.
The same argument yields the boundedness of $w^b_y(x,y,t)$ and the lower bounds for $w^b_{xx}(x,y,t)$ and $w^b_{yy}(x,y,t)$, while their upper bounds follow from the concavity of $w^b$. In particular, $w^b_{x_jx_j}\leq 0$ for any integer $j$ satisfying $1 \leq j \leq d$ and $w^b_{yy}\leq 0$. Moreover, since the Hessian of
$w^b$ is negative semidefinite,
\[
|w_{x_jx_k}^b|^2
\leq w_{x_jx_j}^b w_{x_kx_k}^b,
\qquad
|w_{x_jy}^b|^2
\leq w_{x_jx_j}^b w_{yy}^b.
\]
Hence, all components of $w_{xx}^b$ and $w_{xy}^b$ are locally uniformly bounded.

Consequently, the boundedness of $w^b_t$ in $\overline{Q}$ follows immediately from the boundedness of the above terms and the PDE \eqref{pde_w_bdd_lamb}. Hence, we prove that $\|w^b\|_{W^{2,1}_\infty(\overline{Q})} \leq C_Q$ for some $C_Q > 0$ that is independent of $\epsilon$ and $c_\lambda$. We conclude that $w^b \in W^{2,1}_{\infty,loc}(\mathbb{R}^d \times \mathbb{R} \times [0,T])$ because the argument applies for any $(x_0,y_0) \in \mathbb{R}^d \times \mathbb{R}$ and $R > 0$.
\end{proof}

\section[Regularity of w-epsilon]{Regularity of $w^\epsilon(x,y,t)$} \label{sec_cvg_w^b}
We now lift the restriction on the control set and return to the admissible set $\mathcal{A}(x, y, t)$. Our goal is to show that the corresponding value function inherits the regularity of $w^b$.
To this end, we consider the stochastic differential equation \eqref{auxiliary} and set
\begin{equation} \label{objective}
    w^\epsilon(x, y, t) = \sup _{\Lambda\in \mathcal{A}(x, y, t)} \mathbb{E}\left[U_P\big(\ell\left(x^{\lambda}(T)\right)-U_A^{-1}(y^{\lambda}(T))\big)\right], \forall x\in \mathbb{R}^d, y\in \mathbb{R}, t \in [0, T].
\end{equation}
Notice that Lemma \ref{lemma_bdd_obj} also works for $w^\epsilon(x,y,t)$. Together with the dynamic programming principle, $w^\epsilon(x, y, t)$ is the viscosity solution of the following PDE
\begin{equation} \label{pde_w_epsilon}
    \begin{cases}
        w_t + \sup_{\lambda \in [0,\infty)^d}
        \Bigg\{
        \frac{1+\epsilon}{2} \Tr[\sigma \sigma^\top w_{xx}] + [\sigma\sigma^\top\nabla c(\lambda)]^\top w_{xy} + \frac{1}{2} \|\sigma^\top\nabla c(\lambda)\|^2 w_{yy} + \lambda^\top w_x + c(\lambda) w_y \Bigg\} = 0,\\
        w(x, y,T) = U_P(\ell\left(x\right) - U_A^{-1}(y)),\\
        \sup\limits_{t \in [0,T], x \in \mathbb{R}^d, y \in \mathbb{R}} \frac{|w(x,y,t)|}{e^{D(\|x\|+|y|)}} < \infty, \text{ for some constant } D > 0.
    \end{cases}
\end{equation}
To derive the optimal $\lambda^*$, we have the following first-order condition:
    \begin{equation} \label{FOC}
        \nabla^2 c(\lambda^*) \sigma \sigma^\top  w^\epsilon_{xy} + \nabla^2 c(\lambda^*) \sigma \sigma^\top \nabla c(\lambda^*)w^\epsilon_{yy} + w^\epsilon_x + \nabla c(\lambda^*) w^\epsilon_y = 0.
    \end{equation}

To show that $w^\epsilon$ inherits the desired regularity, our argument proceeds in two steps. First, we establish the convergence of $w^b$ to $w^\epsilon$ as $c_\lambda \to \infty$ from the perspective of stochastic control. Second, we demonstrate that the limiting function inherits the regularity of $w^b$. The intuition for the second step is that Proposition~\ref{prop_unique_lambda_w^b} guarantees a uniform bound on the optimal control $\lambda^*_b$ that is independent of $c_\lambda$. Consequently, for sufficiently large $c_\lambda$, the function $w^b$ stabilizes, ensuring that the limit $w^\epsilon$ preserves the same regularity.

To this end, we define $\mathcal{A}^b_k(x, y, t) := \left\{ \Lambda \in \mathcal{A}(x, y, t) : \frac{1}{k} \leq \lambda_i(s) \leq k, \, \forall i = 1, \dots, d, \, \forall s \in [t, T] \right\}$ where $k > 1$ is an integer. Define the corresponding value function
\begin{align*}
    w_k^b(x, y, t)&=\sup _{\Lambda\in \mathcal{A}_k^b(x, y, t)} \mathbb{E}\left[U_P\big(\ell \left(x^{\lambda}(T)\right)-U_A^{-1}(y^{\lambda}(T))\big)\right]. 
    \end{align*}
By Theorem \ref{thm_regularity_w^b}, we have $w_k^b(x,y,t) \in C^{2+\alpha,\frac{2+\alpha}{2}}(\mathbb{R}^d \times \mathbb{R} \times [0,T])$. 
The desired regularity of $w^\epsilon$ is then established by the following theorem:
\begin{theorem} \label{thm_cvg_w_i_w}
    (i) $\lim_{k \rightarrow \infty}w_k^b(x,y,t) = w^\epsilon(x,y,t), \quad \forall (x,y,t) \in \mathbb{R}^d \times \mathbb{R} \times [0,T]$.\\
    (ii) $w^\epsilon(x,y,t) \in C^{2+\alpha,\frac{2+\alpha}{2}}\bigg(\mathbb{R}^d \times \mathbb{R} \times [0,T]\bigg)$ and $\lambda^*(x,y,t) \in [\frac{1}{\tilde{c}_\lambda},\tilde{c}_\lambda]^d, \quad \forall (x,y,t) \in \mathbb{R}^d \times \mathbb{R} \times [0,T]$.\\
        {(iii) $w^\epsilon \in W^{2,1}_{\infty,loc}(\mathbb{R}^d \times \mathbb{R} \times [0,T])$. In particular, for every $(x_0,y_0) \in \mathbb{R}^d \times \mathbb{R}$ and $ R > 0$, there exists a constant $C_Q > 0$, independent of $\epsilon$, such that $\|w^\epsilon\|_{W^{2,1}_\infty(\overline{Q})} \leq C_Q$, where $Q = B_R(x_0,y_0) \times (0,T)$ is the cylinder and $B_R(x_0,y_0) := \{(x,y)|\|x-x_0\|^2+(y-y_0)^2 < R^2\}$.}

    \end{theorem}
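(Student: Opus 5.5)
The plan is to exploit the stabilization provided by Proposition \ref{prop_unique_lambda_w^b}. The uniform bound $\tilde{c}_\lambda$ does not depend on $c_\lambda$. Hence for every integer $k \geq \tilde{c}_\lambda$ the optimal feedback $\lambda^*_b$ of $w^b_k$ takes values in $[\frac{1}{\tilde{c}_\lambda},\tilde{c}_\lambda]^d$. It follows that $w^b_k$ solves \eqref{pde_w_bdd_lamb} with control set $[\frac{1}{\tilde{c}_\lambda},\tilde{c}_\lambda]^d$ and also with control set $[\frac1k,k]^d$. By the comparison principle (Remark \ref{rmk_CP}), $w^b_k = w^b_{k_0}$ for all $k\ge k_0:=\lceil\tilde{c}_\lambda\rceil$. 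The sequence $w^b_k$ is nondecreasing in $k$ and eventually constant, so it has a pointwise limit $\bar w := w^b_{k_0}$. This function lies in $C^{2+\alpha,\frac{2+\alpha}{2}}_{loc}$ by Theorem \ref{thm_regularity_w^b}.

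\textbf{Step (i).} Since $\mathcal{A}^b_k\subseteq\mathcal{A}$, we have $w^b_k\le w^\epsilon$, so the task is the reverse inequality $w^\epsilon\le \bar w$. I would attempt this by verification rather than by approximating controls. The function $\bar w$ is a classical solution of the restricted HJB equation. I will show it also solves the unrestricted equation \eqref{pde_w_epsilon}, i.e.\ that the supremum over $\lambda\in[0,\infty)^d$ of the Hamiltonian evaluated at $\bar w$ is attained inside $[\frac{1}{\tilde{c}_\lambda},\tilde{c}_\lambda]^d$. By separability the Hamiltonian splits into one-dimensional functions of $\lambda_i$, and the derivative of the $i$-th one is $\psi_i(\lambda_i)$ from Step 3 of the proof of Proposition \ref{prop_unique_lambda_w^b}. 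The sign facts used there hold for $\bar w$ on all of $[0,\infty)$: $\psi_i(0)>0$, $\psi_i$ has a unique zero $\lambda_{f,i}\in[\frac{1}{\tilde{c}_\lambda},\tilde{c}_\lambda]$, and $\psi_i<0$ beyond it. Hence each one-dimensional Hamiltonian increases and then decreases, and its global maximizer over $[0,\infty)$ is $\lambda_{f,i}$. So $\bar w$ is a classical, hence viscosity, solution of \eqref{pde_w_epsilon} with exponential growth, obtained from the bound $|\bar w|\le\phi$ in the proof of part (2). The value function $w^\epsilon$ is also a viscosity solution. A comparison principle for \eqref{pde_w_epsilon} in the exponential-growth class would then give $w^\epsilon=\bar w$.

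\textbf{Main obstacle.} Comparison for \eqref{pde_w_epsilon} is delicate because its coefficients are unbounded in $\lambda$, so Remark \ref{rmk_CP} does not apply directly. I would first try to avoid it with a direct martingale verification. Take any $\Lambda\in\mathcal{A}(x,y,t)$ and apply It\^o's formula to $\bar w(x^\lambda(s),y^\lambda(s),s)$. The drift is $\le 0$ because $\bar w$ satisfies the unrestricted HJB inequality for every $\lambda$. The stochastic integral has integrand involving $\nabla c(\lambda)\bar w_y$, $\bar w_x$, and possibly exponential growth. I would localize with stopping times $\tau_n$, the exit times from balls, and pass to the limit. That limit uses the integrability \eqref{assumption_lambda}, the exponential bound $\phi$, and uniform integrability of $\bar w(x^\lambda(\tau_n),y^\lambda(\tau_n),\tau_n)$. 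Controlling this uniform integrability is where the most care is needed; the lower bound $U_P\ge$ exponential from Remark \ref{rmk_growth_U_P'} together with moment estimates should suffice. The conclusion is $\bar w(x,y,t)\ge J(\Lambda)$, hence $\bar w\ge w^\epsilon$.

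\textbf{Steps (ii) and (iii).} Once $w^\epsilon=\bar w=w^b_{k_0}$ is established, (ii) follows from Theorem \ref{thm_regularity_w^b}. The optimal control is $\lambda^*=\lambda_{f}$, the unique solution of \eqref{FOC}, which lies in $[\frac{1}{\tilde{c}_\lambda},\tilde{c}_\lambda]^d$. For (iii), apply part (2) of Proposition \ref{prop_unique_lambda_w^b} to $w^b_{k_0}$. It gives $\|w^\epsilon\|_{W^{2,1}_\infty(\overline Q)}\le C_Q$, and $C_Q$ is independent of $\epsilon$ and of the cutoff $k_0$.
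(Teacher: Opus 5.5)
Your stabilization step and your derivation of (ii) and (iii) from it agree with the paper. Your remark that $\psi_i$ is the $\lambda_i$-derivative of the separable Hamiltonian, so that $\lambda_{f,i}$ is its global maximizer over $[0,\infty)$, correctly justifies a step the paper uses implicitly.

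\textbf{The gap is in (i)}, at the point you flag, and the tools you name do not close it. Localizing gives $\bar w(x,y,t)\ge \mathbb{E}[\bar w(x^\lambda(\tau_n),y^\lambda(\tau_n),\tau_n)]$. Since $\bar w\le 0$, Fatou's lemma only yields $\limsup_n \mathbb{E}[\bar w(x^\lambda(\tau_n),y^\lambda(\tau_n),\tau_n)]\le J(\Lambda)$, which is the wrong direction. You therefore need uniform integrability of $\bar w(x^\lambda(\tau_n),y^\lambda(\tau_n),\tau_n)$.

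The growth of $\bar w$ is, however, genuinely exponential. Remark \ref{rmk_growth_U_P'} with a constant control only gives $\bar w(x,y,s)\ge -C\bigl(1+e^{c_P(U_A^{-1}(y)-\ell(x))}\bigr)$, and for an exponential $U_P$ this order cannot be improved. So you would need exponential moments of $U_A^{-1}(y^\lambda)-\ell(x^\lambda)$ at stopping times. A general $\Lambda\in\mathcal A(x,y,t)$ only satisfies \eqref{assumption_lambda}, which yields second moments of the state (via BDG) and nothing more.

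Structurally, $-\bar w(x^\lambda(s),y^\lambda(s),s)$ is only a nonnegative local submartingale. Nothing in your argument rules out strict-local-martingale behaviour, under which $\bar w(x,y,t)\ge \mathbb{E}[\bar w(\cdot,T)]$ would fail. Hence ``moment estimates should suffice'' is not a proof, and $w^\epsilon=\bar w$ is not established.

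\textbf{The missing idea} is the paper's Lipschitz modification of the principal's utility. Replace $U_P$ by $U_P^M$: equal to $U_P$ on $[-M,\infty)$, asymptotically linear below $-M$, with $U_P^M\ge U_P$, and satisfying Assumption \ref{assumptionUP} with $M$-independent constants. Then:
\begin{itemize}
\item Proposition \ref{prop_unique_lambda_w^b} holds with the same $\tilde c_\lambda$, so $\bar w^M:=w^{b,M}_{k_0}$ is a classical solution of the unrestricted equation.
\item $\bar w^M$ is globally Lipschitz in $(x,y)$, with an $M$-dependent constant. For such a function your verification does go through using only the $L^2$ bounds from \eqref{assumption_lambda}, giving $\bar w^M(x,y,t)\ge \mathbb{E}[U_P^M(\ell(x^\lambda(T))-U_A^{-1}(y^\lambda(T)))]\ge J(\Lambda)$.
\item Finally, $\bar w^M\to\bar w$ as $M\to\infty$, because under controls bounded by $\tilde c_\lambda$ the terminal variable $z(T)$ has Gaussian lower tails (Lemma \ref{lemma_w^bM_to_w^b}).
\end{itemize}

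The paper combines the same device with an $L^2$ approximation of $\Lambda$ by truncated controls in $\mathcal A^b_k$ (Lemmas \ref{lemma_cvg_lambda} and \ref{lemma_cvg_w^M}), rather than with a verification theorem. That avoids It\^o calculus with unbounded controls entirely. With the utility truncation added, your verification route would be a valid alternative; without it, the proof of (i) is incomplete.
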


\section[Regularity of w-zero]{Regularity of $w^0(x,y,t)$} \label{sec_cvg_w}
We now turn to the setting without the noise ($\epsilon = 0$) to establish the regularity of the value function $w^0$. Our argument proceeds in two steps.
First, we show that $\lim_{\epsilon \to 0} w^\epsilon(x,y,t) = w^0$. Moreover, $w^0(x,y,t) \in W^{2,1}_{p,loc}(\mathbb{R}^d \times \mathbb{R} \times [0,T])$. To establish the pointwise limit, the upper bound $\lim_{\epsilon \to 0} w^\epsilon(x,y,t) \leq w^0$ follows directly from the comparison principle for the counterpart of PDE \eqref{pde_w_bdd_lamb} with $\epsilon=0$. Conversely, establishing the lower bound $\lim_{\epsilon \to 0} w^\epsilon(x,y,t) \geq w^0$ requires approximating the principal's utility $U_P$ via the modified asymptotically linear utility $U_P^M$ introduced in the proof of Theorem \ref{thm_cvg_w_i_w}. Second, we demonstrate that $w^0 \in W^{2,1}_{\infty,loc}(\mathbb{R}^d \times \mathbb{R} \times [0,T])$.
\begin{theorem} \label{thm_regularity_w^0}
        $w^0 \in W^{2,1}_{\infty,loc}(\mathbb{R}^d \times \mathbb{R} \times [0,T])$. That is, for any point $(x_0,y_0) \in \mathbb{R}^d \times \mathbb{R}$ and any radius $ R > 0$, we have $w^0 \in {W^{2,1}_\infty(\overline{Q})}$, where $Q = B_R(x_0,y_0) \times (0,T)$ is the cylinder and $B_R(x_0,y_0) := \{(x,y)|\|x-x_0\|^2+(y-y_0)^2 < R^2\}$.
\end{theorem}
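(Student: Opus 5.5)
We will obtain the regularity of $w^0$ by passing to the limit $\epsilon\to 0$ in the uniform estimates of Theorem \ref{thm_cvg_w_i_w}(iii). The plan has three steps: identify the limit pointwise, extract a weakly convergent subsequence in a reflexive Sobolev space, and then upgrade from $W^{2,1}_p$ to $W^{2,1}_\infty$.

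\textbf{Step 1: pointwise convergence.} We show $w^\epsilon\to w^0$ pointwise on $\mathbb{R}^d\times\mathbb{R}\times[0,T]$.

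For the upper bound, $w^\epsilon$ is a classical solution, and its optimal control lies in $[\frac{1}{\tilde c_\lambda},\tilde c_\lambda]^d$ by Theorem \ref{thm_cvg_w_i_w}(ii). We will show that $w^\epsilon$ is a supersolution of the $\epsilon=0$ equation \eqref{pde_w_noepsilon}. This uses $\Tr[\sigma\sigma^\top w^\epsilon_{xx}]\le 0$, which follows from the concavity inherited from Lemma \ref{lemma_sec_derivative_w^b}. We will also use the exponential growth bound, which is uniform in $\epsilon$. The comparison principle of Remark \ref{rmk_CP} then gives $w^\epsilon\ge w^0$. Since $\mathcal A^0\subseteq\mathcal A$, it is natural to expect $w^\epsilon\ge w^0$ directly, but the extra noise changes the dynamics, so we do not rely on that.

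We expect the two-sided bound to be the main obstacle. Rather than fight over which inequality is immediate, we will control $|w^\epsilon-w^0|$ directly. Fix any $\Lambda\in\mathcal A^0$. Along the same control, the two state processes differ only by $\sqrt\epsilon\sigma\mathcal W(T)$ in the $x$-component. We then have
\[
\left|\mathbb E\,U_P(\ell(x^\lambda_T)-U_A^{-1}(y^\lambda_T))-\mathbb E\,U_P(\ell(\hat x^\lambda_T)-U_A^{-1}(y^\lambda_T))\right| \le C\sqrt\epsilon\,\mathbb E\big[U_P'(\cdot)e^{c_P c_\ell\sqrt\epsilon|\sigma\mathcal W_T|}|\sigma\mathcal W_T|\big].
\]
This uses Remark \ref{rmk_growth_U_P'} and the Lipschitz bound on $\ell$. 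The right-hand side may be unbounded over all controls, which is why we plan to replace $U_P$ by the asymptotically linear truncation $U_P^M$ from the proof of Theorem \ref{thm_cvg_w_i_w}. For $U_P^M$ the difference is $O(\sqrt\epsilon)$ uniformly in the control. A separate comparison argument then shows that the corresponding value functions converge as $M\to\infty$, locally uniformly in $\epsilon$.

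For the reverse direction (controls in $\mathcal A$ adapted to $\mathcal W$), we plan to condition on $\mathcal W$. Independence of $\mathcal W$ allows the extra noise to be absorbed by a shift of the initial point, and the Lipschitz bound on $w^0$ in $x$ gives the estimate. This Lipschitz bound comes from the uniform bound on $w^\epsilon_x$ and the local boundedness in Lemma \ref{lemma_bdd_obj}.

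\textbf{Step 2: weak compactness.} Fix $Q=B_R(x_0,y_0)\times(0,T)$. By Theorem \ref{thm_cvg_w_i_w}(iii), $\|w^\epsilon\|_{W^{2,1}_\infty(\overline Q)}\le C_Q$ uniformly in $\epsilon$. Hence, for every finite $p>1$, $\|w^\epsilon\|_{W^{2,1}_p(Q)}\le C_Q|Q|^{1/p}$. Since $W^{2,1}_p(Q)$ is reflexive, a subsequence $w^{\epsilon_n}$ converges weakly in $W^{2,1}_p(Q)$ to some $\bar w$.

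By Step 1 and the compact embedding into $C(\overline Q)$ (or simply the uniform Lipschitz bounds together with Arzel\`a–Ascoli), we get $\bar w=w^0$. By Mazur's lemma, convex combinations of the $w^{\epsilon_n}$ converge strongly in $W^{2,1}_p(Q)$ to $w^0$. The norm is weakly lower semicontinuous, so $\|w^0\|_{W^{2,1}_p(Q)}\le C_Q|Q|^{1/p}$.

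\textbf{Step 3: from $W^{2,1}_p$ to $W^{2,1}_\infty$.} Letting $p\to\infty$ gives $\|D w^0\|_{L^p(Q)}\le C_Q|Q|^{1/p}$ for each derivative $D\in\{1,\partial_x,\partial_y,\partial^2_{(x,y)},\partial_t\}$. Since $\lim_{p\to\infty}\|f\|_{L^p}=\|f\|_{L^\infty}$, we obtain $\|w^0\|_{W^{2,1}_\infty(\overline Q)}\le C_Q$.

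An alternative for this step is weak-$*$ compactness in $L^\infty$ applied directly to each derivative. As $(x_0,y_0)$ and $R$ are arbitrary, we conclude $w^0\in W^{2,1}_{\infty,loc}$.
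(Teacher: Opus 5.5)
Your Steps 2 and 3 are sound and essentially the paper's. Letting $p\to\infty$ in $\|Dw^0\|_{L^p(Q)}\le C_Q|Q|^{1/p}$ is a clean substitute for the paper's Mazur-plus-a.e.-subsequence argument. Your $U_P^M$ argument for $\liminf_{\epsilon\to0}w^\epsilon\ge w^0$ is also the paper's: a near-optimal control from $\mathcal A^0\subseteq\mathcal A$ gives $w^{\epsilon,M}\ge w^{0,M}-O(\sqrt{\epsilon})$, then one uses $w^{\epsilon,M}\to w^\epsilon$ uniformly in $\epsilon$ and $U_P^M\ge U_P$.

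The gap is the opposite inequality, $\limsup_{\epsilon\to0}w^\epsilon\le w^0$. Without it the weak limit cannot be identified with $w^0$. Your comparison step has the sign reversed. Write $F_0$ for the $\epsilon=0$ Hamiltonian with $\lambda$ restricted to $[\frac{1}{\tilde c_\lambda},\tilde c_\lambda]^d$. By concavity, the classical solution $w^\epsilon$ satisfies $w^\epsilon_t+F_0(w^\epsilon)=-\frac{\epsilon}{2}\Tr[\sigma\sigma^\top w^\epsilon_{xx}]\ge 0$, so it is a \emph{sub}solution of the $\epsilon=0$ equation. Restricting the supremum only lowers it, so $w^0$ is a supersolution of the same equation. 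Comparison then gives $w^\epsilon\le w^0$, not $w^\epsilon\ge w^0$; extra noise can only hurt in a concave problem. This is exactly how the paper obtains the missing direction, and it holds for every $\epsilon$, not just in the limit.

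Your fallback for this direction does not close as written. Conditioning on $\mathcal W$ yields at best $w^\epsilon(x,y,t)\le\mathbb E\big[w^0(x+\sqrt{\epsilon}\,\sigma(\mathcal W_T-\mathcal W_t),y,t)\big]$. Even that requires the conditioned control, which is only $\mathbb F^{\mathcal B}$-adapted, to be admissible for $w^0$, whose controls are $\mathbb F^{\hat x}$-optional.

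You then invoke a Lipschitz bound on $w^0$ that ``comes from the uniform bound on $w^\epsilon_x$''. Transferring any bound from $w^\epsilon$ to $w^0$ presupposes $w^\epsilon\to w^0$, which is precisely what this step is supposed to establish, so the argument is circular. The derivative bounds are also only local, with constants growing exponentially in $(x,y)$ through the barrier $\phi$, whereas the Gaussian shift is unbounded.

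The route could be salvaged by proving upper semicontinuity (or concavity) of $w^0$ directly, then using $w^0\le 0$ with reverse Fatou (or Jensen). The corrected comparison argument makes this unnecessary.
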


\section{Conclusion} \label{sec_conclusion}
This paper establishes the local \(W^{2,1}_{\infty, loc}\) regularity of the value function associated with a multidimensional continuous-time principal-agent model with separable nonquadratic effort costs. The main difficulty is that the corresponding HJB equation is both degenerate and fully nonlinear, with coefficients that may be unbounded. We address this problem through a family of regularized control problems with bounded effort sets. The key step is to show that the optimal effort is unique, positive, and uniformly bounded independently of both the control restriction and the regularization parameter. This bound allows the control restriction to be removed and yields local derivative estimates that remain uniform as the additional noise vanishes. By combining the convergence of the regularized value function with the uniform estimates, we obtain the regularity of the original value function. The resulting regularity is weaker than the regularity available in the quadratic-cost setting, but it applies to a broader class of separable effort costs for which the quadratic-cost arguments are no longer available. Extending the analysis to nonseparable costs and more general volatility structures remains an open direction.

\bibliographystyle{plain}
\bibliography{PA_260424}

\clearpage

\appendix
\section*{Appendices}
\section{Proof of Lemma \ref{lemma_bdd_obj}}
\begin{proof}
    On the one hand, $w^0(x, y, t) \leq 0$ since $U_P \leq 0$ by Assumption \ref{assumptionUP}. On the other hand, taking $\lambda^0 = \mathbf{1} \in \mathcal{A}^0(x, y, t)$, we have
    \begin{align*}
        \hat{x}^{\lambda^0}(T) &= x + \mathbf{1} (T-t) + \sigma(\mathcal{B}(T) - \mathcal{B}(t)) \sim \mathcal{N}\bigg( x + \mathbf{1} (T-t),\sigma \sigma^\top (T-t) \bigg),\\
        y^{\lambda^0}(T) &= y + c(\mathbf{1})(T-t) + \left[\nabla c(\mathbf{1})\right]^\top \sigma (\mathcal{B}(T) - \mathcal{B}(t)) \sim \mathcal{N}\bigg( y + c(\mathbf{1})(T-t), \|\sigma^\top \nabla c(\mathbf{1})\|^2(T-t) \bigg).\\
    \end{align*}
    We aim to show $\mathbb{E}\left[e^{2c_Pc_\ell\|\hat{x}^{\lambda^0}(T)\|}\right], \mathbb{E}\left[e^{2c_P c \cdot y^{\lambda^0}(T) }\right], \mathbb{E}\left[e^{-2c_P c \cdot y^{\lambda^0}(T) }\right]$ are all uniformly bounded above using the moment-generating functions of normal random variables. Indeed,
                    \begin{align}
        \mathbb{E}\left[e^{2c_P c \cdot y^{\lambda^0}(T) }\right] &= e^{2c_P c \big(y + c(\mathbf{1})(T-t)\big) + \frac{4(c_P c)^2 \|\sigma^\top \nabla c(\mathbf{1})\|^2(T-t)}{2}}\\
        &\leq e^{2c_P c\big(y_0+R+c(\mathbf{1})T\big) + \frac{4(c_P c)^2 \|\sigma^\top \nabla c(\mathbf{1})\|^2 T}{2}}.
    \end{align}
    The same argument gives a uniform upper bound for $\mathbb{E}\left[e^{-2c_P c \cdot y^{\lambda^0}(T) }\right]$.
    Moreover, note that 
    $$
    \|\hat{x}^{\lambda^0}(T)\| = \sqrt{\sum_{i=1}^d \left(\hat{x}^{\lambda^0}(T)\right)_i^2} \leq \sqrt{d \cdot \max_{1 \le i \le d} \left(\hat{x}^{\lambda^0}(T)\right)_i^2} = \sqrt{d} \max_{1 \le i \le d} \left|\left(\hat{x}^{\lambda^0}(T)\right)_i\right|.
    $$
    Then it follows that
    \begin{align}
        \mathbb{E}\left[e^{2c_Pc_\ell\|\hat{x}^{\lambda^0}(T)\|}\right] &\leq \mathbb{E}\left[e^{2c_Pc_\ell \sqrt{d} \max_{1 \le i \le d} \left|\left(\hat{x}^{\lambda^0}(T)\right)_i\right|}\right]\\
        &= \mathbb{E}\left[\max_{1 \le i \le d} e^{2c_Pc_\ell \sqrt{d} \left|\left(\hat{x}^{\lambda^0}(T)\right)_i\right|}\right]\\
        &\leq \sum_{i=1}^d \mathbb{E}\left[e^{2c_Pc_\ell \sqrt{d} \left|\left(\hat{x}^{\lambda^0}(T)\right)_i\right|}\right]\\
        &\leq \sum_{i=1}^d \left(\mathbb{E}\left[e^{2c_Pc_\ell \sqrt{d} \left(\hat{x}^{\lambda^0}(T)\right)_i}\right] + \mathbb{E}\left[e^{-2c_Pc_\ell \sqrt{d} \left(\hat{x}^{\lambda^0}(T)\right)_i}\right]\right).
    \end{align}
    Since $\left(\hat{x}^{\lambda^0}(T)\right)_i \sim \mathcal{N}\left(x_i + T-t, \sigma_i^2 (T-t)\right)$, the same argument used for $\mathbb{E}\left[e^{2c_P c \cdot y^{\lambda^0}(T) }\right]$ applies here.

    It follows from Assumption \ref{assumptionUA} and the Mean Value Theorem that there exists $\xi_3$ between $0$ and $y^{\lambda^0}(T)$ such that $U_A^{-1}(y^{\lambda^0}(T)) = (U_A^{-1})'(\xi_3)y^{\lambda^0}(T) + U_A^{-1}(0)$.
    Thus, by the Cauchy--Schwarz inequality, we have
    \begin{align*}
        w^0(x, y, t) &= \sup _{\Lambda\in \mathcal{A}^0(x, y, t)} \mathbb{E}\left[U_P\big(\ell(\hat{x}^{\lambda}(T))-U_A^{-1}(y^{\lambda}(T))\big)\right]\\
        &\geq \mathbb{E}\left[U_P\big(\ell(\hat{x}^{\lambda^0}(T))-U_A^{-1}(y^{\lambda^0}(T))\big)\right]\\
        &= \mathbb{E}\left[U_P\big(\ell(\hat{x}^{\lambda^0}(T)) - (U_A^{-1})'(\xi_3)y^{\lambda^0}(T) - U_A^{-1}(0)\big)\right]\\
        & \geq \mathbb{E}\left[-C_Pe^{-c_P\big(\ell(\hat{x}^{\lambda^0}(T)) - (U_A^{-1})'(\xi_3)y^{\lambda^0}(T) - U_A^{-1}(0)\big)} - C_P\right]\\
        & \geq \mathbb{E}\left[-C_Pe^{c_P\left|\ell(\hat{x}^{\lambda^0}(T)) - (U_A^{-1})'(\xi_3)y^{\lambda^0}(T) - U_A^{-1}(0)\right|}\right]- C_P\\
        & \geq \mathbb{E}\left[-C_Pe^{c_P\left|\ell(\hat{x}^{\lambda^0}(T))\right|+ c_P\left|U_A^{-1}(0)\right| + c_P\left|(U_A^{-1})'(\xi_3)y^{\lambda^0}(T)\right| }\right]- C_P\\
        & \geq \mathbb{E}\left[-C_Pe^{c_P\left|U_A^{-1}(0)\right|}e^{c_P\left|\ell(\hat{x}^{\lambda^0}(T))\right|+ c_P c \cdot\left|y^{\lambda^0}(T)\right| }\right]- C_P\\
        & \geq -C_Pe^{c_P\left(\left|U_A^{-1}(0)\right|+c_\ell \right)} \mathbb{E}\left[e^{c_Pc_\ell \|\hat{x}^{\lambda^0}(T)\|+ c_P c \cdot\left|y^{\lambda^0}(T)\right| }\right]- C_P\\
        & \geq -C_Pe^{c_P\left(\left|U_A^{-1}(0)\right|+c_\ell \right)}\bigg(\mathbb{E}\left[e^{2c_Pc_\ell\|\hat{x}^{\lambda^0}(T)\|}\right]\bigg)^{\frac{1}{2}} \bigg(\mathbb{E}\left[e^{2c_P c \cdot\left|y^{\lambda^0}(T)\right| }\right]\bigg)^{\frac{1}{2}}- C_P\\
        & \geq -C_P\bigg(\mathbb{E}\left[e^{2c_Pc_\ell\|\hat{x}^{\lambda^0}(T)\|}\right]\bigg)^{\frac{1}{2}} \cdot \bigg(\mathbb{E}\left[e^{2c_P c \cdot y^{\lambda^0}(T) }\right] + \mathbb{E}\left[e^{-2c_P c \cdot y^{\lambda^0}(T) }\right]\bigg)^{\frac{1}{2}} - C_P,
    \end{align*}
    for some constant $C_P > 0$.
    Hence, $w^0(x,y,t)$ is bounded below in $\overline{Q}$.
\end{proof}

\section{Proof of Theorem \ref{thm_regularity_w^b}}
\begin{proof}
    We use Theorem 6.4.4 of \cite{krylov1987nonlinear} to establish the regularity of $w^b(x,y,t)$. To use that theorem, we need the convexity of the PDE operator. To show this, we define
\begin{align*}
    L(D^2w, p, \lambda) &=\frac{1+\epsilon}{2} \Tr[\sigma \sigma^\top w_{xx}] + [\sigma \sigma^\top \nabla c(\lambda)]^\top w_{xy} + \frac{1}{2} \|\sigma^\top \nabla c(\lambda)\|^2 w_{yy} + \lambda^\top p_1 + c(\lambda) p_2,\\
    F(D^2w, p) &= \sup_{\lambda \in [\frac{1}{c_\lambda},c_\lambda]^d} L(D^2w, p, \lambda),
\end{align*}
where $p = (p_1,p_2) \in \mathbb{R}^d \times \mathbb{R}$. The convexity of $F$ is guaranteed by
\begin{lemma} \label{lemma_convex_pde}
    $F(D^2w,p)$ is convex with respect to $D^2w$.
\end{lemma}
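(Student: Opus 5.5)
Since a supremum of affine functions is convex, the natural plan is to show that $F$ is a pointwise supremum, over the compact index set $[\frac{1}{c_\lambda},c_\lambda]^d$, of maps that are affine in the matrix variable $D^2w$, and then conclude directly.

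Fix the first-order variable $p=(p_1,p_2)$. For each fixed $\lambda \in [\frac{1}{c_\lambda},c_\lambda]^d$, the map
\[
M=\begin{pmatrix} M_{xx} & M_{xy}\\ M_{xy}^\top & M_{yy}\end{pmatrix}\ \longmapsto\ L(M,p,\lambda)=\tfrac{1+\epsilon}{2}\Tr[\sigma\sigma^\top M_{xx}]+[\sigma\sigma^\top\nabla c(\lambda)]^\top M_{xy}+\tfrac12\|\sigma^\top\nabla c(\lambda)\|^2M_{yy}+\lambda^\top p_1+c(\lambda)p_2
\]
is affine on the space of symmetric $(d+1)\times(d+1)$ matrices. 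Its linear part is $\Tr[A(\lambda)M]$, where $A(\lambda)=\tfrac12\Sigma(\lambda)\Sigma(\lambda)^\top$ plus the extra $\tfrac{\epsilon}{2}\sigma\sigma^\top$ block in $xx$. Here $\Sigma(\lambda)$ is the diffusion matrix of the state $(x,y)$ driven by $(\mathcal{B},\mathcal{W})$. The trace representation is not needed for convexity, but it also shows $A(\lambda)\succeq \tfrac{\epsilon}{2}\operatorname{diag}(\sigma\sigma^\top,0)$ plus a positive-semidefinite part, which is useful when the ellipticity hypotheses of Krylov's Theorem 6.4.4 are verified afterwards.

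Next, I would argue that $F$ is finite. Since $c$ is continuous, $\nabla c$ is continuous on the compact set $[\frac{1}{c_\lambda},c_\lambda]^d$, so the coefficients are bounded there. Hence $F$ is finite everywhere and is in fact attained as a maximum.

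Then convexity follows directly. For $M^1,M^2$ and $\theta\in[0,1]$, and for every $\lambda$,
\[
L(\theta M^1+(1-\theta)M^2,p,\lambda)=\theta L(M^1,p,\lambda)+(1-\theta)L(M^2,p,\lambda)\le \theta F(M^1,p)+(1-\theta)F(M^2,p).
\]
Taking the supremum over $\lambda$ gives $F(\theta M^1+(1-\theta)M^2,p)\le\theta F(M^1,p)+(1-\theta)F(M^2,p)$.

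There is no real obstacle. The only point needing care is to treat $L$ as a function of the symmetric matrix $D^2w$ with $p$ held fixed, and to ensure the supremum is finite, which compactness of the control set provides. This finiteness is precisely why the restriction to $[\frac{1}{c_\lambda},c_\lambda]^d$ is used at this stage. The same reasoning also gives joint convexity in $(D^2w,p)$, because $L$ is affine in $p$ too, if Krylov's theorem requires that form.
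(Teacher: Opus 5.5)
Your argument is correct and is essentially the paper's proof: fix $p$, use that $L(\cdot,p,\lambda)$ is affine in $D^2w$ for each fixed $\lambda$, bound each term by $F$, and take the supremum over $\lambda$. Your version with a general $\theta\in[0,1]$ is slightly more complete than the paper's, which checks only the midpoint case $\theta=\tfrac12$ and so tacitly relies on the continuity of $F$ (guaranteed by the bounded coefficients on $[\tfrac{1}{c_\lambda},c_\lambda]^d$ that you point out) to obtain full convexity. One small slip in your trace side remark, which does not affect the proof: if $\Sigma(\lambda)$ already includes the $\mathcal{W}$-noise, then $\tfrac12\Sigma\Sigma^\top$ already has the factor $\tfrac{1+\epsilon}{2}$ in the $xx$ block, so adding a further $\tfrac{\epsilon}{2}\sigma\sigma^\top$ double-counts; you should take $\Sigma$ to be driven by $\mathcal{B}$ alone.
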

\begin{proof}[Proof of Lemma \ref{lemma_convex_pde}]
    Let $\mathcal{S}(\mathbb{R})$ denote the set of $(d+1) \times (d+1)$ symmetric matrices with real entries. Fix $p \in \mathbb{R}^{d+1}$. For any $D^2 w_1, D^2 w_2 \in \mathcal{S}(\mathbb{R})$ and any $\lambda_0 \in [\frac{1}{c_\lambda},c_\lambda]^d$, we have
    \begin{align*}
        F(D^2w_1,p) &= \sup_{\lambda \in [\frac{1}{c_\lambda},c_\lambda]^d} L(D^2w_1, p, \lambda) \geq L(D^2w_1, p, \lambda_0),\\
        F(D^2w_2,p) &= \sup_{\lambda \in [\frac{1}{c_\lambda},c_\lambda]^d} L(D^2w_2, p, \lambda) \geq L(D^2w_2, p, \lambda_0).
    \end{align*}
    Then it follows from the linearity of $L$ that
    \begin{align*}
        \frac{F(D^2w_1,p)+F(D^2w_2,p)}{2} \geq \frac{L(D^2w_1, p, \lambda_0)+L(D^2w_2, p, \lambda_0)}{2} = L(\frac{D^2w_1+D^2w_2}{2}, p, \lambda_0).
    \end{align*}
    Since this holds for any $\lambda_0 \in [\frac{1}{c_\lambda},c_\lambda]^d$, taking supremum over $\lambda_0$ yields
    \begin{align*}
        \frac{F(D^2w_1, p)+F(D^2w_2, p)}{2} \geq F(\frac{D^2w_1+D^2w_2}{2}, p).
    \end{align*}
\end{proof}

    The above lemma will be used to prove the theorem. Precisely, fix $(x_0,y_0) \in \mathbb{R}^d \times \mathbb{R}$ and $R > 0$, and consider the cylinder $Q = B_R(x_0,y_0) \times (0,T)$ where $B_R(x_0,y_0) := \{(x,y)|\|x-x_0\|^2+(y-y_0)^2 < R^2\}$. This cylinder satisfies the exterior sphere condition since this domain has a $C^2$ boundary. Then we consider the PDE problem on this cylinder. Moreover, since $w^b(x, y, t)$ is the viscosity solution of the PDE \eqref{pde_w_bdd_lamb}, we assign this value as the lateral boundary condition, i.e., we consider
    \begin{equation} \label{pde_w_bdd_lamb_local}
    \begin{cases}
        w_t + \sup_{\lambda \in [\frac{1}{c_\lambda},c_\lambda]^d}
        \Bigg\{
        \frac{1+\epsilon}{2} \Tr[\sigma \sigma^\top w_{xx}] + [\sigma \sigma^\top \nabla c(\lambda)]^\top w_{xy}\\
        \qquad \qquad \qquad \qquad + \frac{1}{2} \|\sigma^\top \nabla c(\lambda)\|^2 w_{yy} + \lambda^\top w_x + c(\lambda) w_y \Bigg\} = 0, &(x,y,t) \in B_R(x_0,y_0) \times (0,T),\\
        w(x,y,T) = U_P(\ell(x) - U_A^{-1}(y)),   &(x,y) \in \overline{B_R(x_0,y_0)}, \\
        w(x,y,t) = w^b(x, y, t), \quad &(x,y,t) \in \partial B_R(x_0,y_0) \times (0,T).
    \end{cases}
    \end{equation}
    It follows that $w^b(x, y, t)$ is the viscosity solution of the PDE \eqref{pde_w_bdd_lamb_local}.
    
        We next verify that the terminal and lateral boundary conditions in \eqref{pde_w_bdd_lamb_local} are continuous and bounded. On the one hand, by the regularity of utility functions, the function $U_P(\ell(x) - U_A^{-1}(y))$ is continuous on $\overline{Q}$. Furthermore, let $(w^b)^*$ and $(w^b)_*$ denote the upper- and lower-semicontinuous envelopes of $w^b$, respectively. The dynamic programming principle implies that $(w^b)^*$ and $(w^b)_*$ are respectively a viscosity subsolution and a viscosity supersolution of \eqref{pde_w_bdd_lamb}. Hence, the comparison principle for PDE \eqref{pde_w_bdd_lamb} (see Remark \ref{rmk_CP}) yields $(w^b)^*\leq (w^b)_*$. Since the reverse inequality holds by definition, we obtain $(w^b)^*=(w^b)_*$, and therefore
        $w^b$ is continuous on $\overline{Q}$.
        On the other hand, by Lemma \ref{lemma_bdd_obj} and the regularity of utility functions, the two functions above are uniformly bounded on the cylinder $Q$. Moreover, we have $U_P(\ell(x) - U_A^{-1}(y)) \in C^{2+\alpha^0}(B_R(x_0,y_0))$. By Lemma \ref{lemma_convex_pde} and Theorem 6.4.4 of \cite{krylov1987nonlinear}, the PDE \eqref{pde_w_bdd_lamb_local} admits a solution $w^l(x,y,t) \in C^{2+\alpha,\frac{2+\alpha}{2}}(\overline{B_R^0(x_0,y_0)} \times [0,T])$ where $B_R^0(x_0,y_0) \subset B_R(x_0,y_0)$. By the comparison principle for PDE \eqref{pde_w_bdd_lamb_local} (see Remark \ref{rmk_CP}),
        we have $w^b(x,y,t) = w^l(x,y,t) \in C^{2+\alpha,\frac{2+\alpha}{2}}(\overline{B_R^0(x_0,y_0)} \times [0,T])$. Moreover, $\|w^b\|_{C^{2+\alpha,\frac{2+\alpha}{2}}}$ is locally uniformly bounded. Since this is true for any $(x_0,y_0) \in \mathbb{R}^d \times \mathbb{R}$, we have $w^b(x,y,t) \in C_{loc}^{2+\alpha,\frac{2+\alpha}{2}}(\mathbb{R}^d \times \mathbb{R} \times [0,T])$.
\end{proof}

\section{Proof of Lemma \ref{lemma_sec_derivative_w^b}}
\begin{proof} [Proof of Lemma \ref{lemma_sec_derivative_w^b}]
     For $\hat{k} \in \{1,2\}$, given $x^{(\hat{k})} \in \mathbb{R}^d$, $y^{(\hat{k})} \in \mathbb{R}$, $t \in [0,T]$ and $c_\lambda$, we take any $\lambda^{(1)} \in \mathcal{A}^b(x^{(1)}, y^{(1)}, t)$, $\lambda^{(2)} \in \mathcal{A}^b(x^{(2)}, y^{(2)}, t)$. Consider the stochastic control problem:
    \begin{equation}
    \begin{cases}
    d x^{(\hat{k})}(s)= {\lambda^{(\hat{k})}}(s) d s+\sigma d \mathcal{B}(s) + \sqrt{\epsilon} \sigma d\mathcal{W}(s), \\
    d y^{(\hat{k})}(s)=c(\lambda^{(\hat{k})}(s)) d s+[\nabla c(\lambda^{(\hat{k})}(s))]^\top \sigma d \mathcal{B}(s), \\
    x^{(\hat{k})}(t)=x^{(\hat{k})}, \quad y^{(\hat{k})}(t)=y^{(\hat{k})}. 
    \end{cases}
    \end{equation}
    Let $\hat{\lambda}$ satisfy
    $$
    \nabla c(\hat{\lambda}) = \frac{\nabla c(\lambda^{(1)}) + \nabla c(\lambda^{(2)})}{2} \Rightarrow c'_i(\hat{\lambda}_i) = \frac{c'_i(\lambda^{(1)}_i) + c'_i(\lambda^{(2)}_i)}{2}, \quad 1 \leq i \leq d.
    $$
    Notice that since $c''_i > 0$, $c'_i$ is increasing. Thus, $c'_i(\hat{\lambda}_i)$ lies between $c'_i(\lambda^{(1)}_i)$ and $c'_i(\lambda^{(2)}_i)$. Hence, $\hat{\lambda}_i \in [\frac{1}{c_\lambda}, c_\lambda]$ and it follows that $\hat{\lambda} \in \mathcal{A}^b(\hat{x},\hat{y},t)$.
    
    We consider $\hat{x}(s)$ and $\hat{y}(s)$ satisfying
    \begin{equation}
    \begin{cases}
    d \hat{x}(s)= \hat{\lambda}(s) d s+\sigma d \mathcal{B}(s)+ \sqrt{\epsilon} \sigma d\mathcal{W}(s), \\
    d \hat{y}(s)=c(\hat{\lambda}(s)) d s+\nabla c(\hat{\lambda}(s))^\top \sigma d \mathcal{B}(s), \\
    \hat{x}(t)=\frac{x^{(1)}+x^{(2)}}{2}, \quad \hat{y}(t)=\frac{y^{(1)}+y^{(2)}}{2}. 
    \end{cases}
    \end{equation}
    On the one hand, since $c^{(3)}_i \geq 0$, it follows that $c'_i$ is convex, so Jensen's inequality gives
    \begin{align*}
        &\frac{c'_i(\lambda^{(1)}_i) + c'_i(\lambda^{(2)}_i)}{2} \geq c'_i\bigg(\frac{\lambda^{(1)}_i+\lambda^{(2)}_i}{2}\bigg)\\
        \Leftrightarrow \quad & (c'_i)^{-1}\bigg[\frac{c'_i(\lambda^{(1)}_i) + c'_i(\lambda^{(2)}_i)}{2}\bigg] \geq \frac{\lambda^{(1)}_i+\lambda^{(2)}_i}{2}  \quad ((c'_i)^{-1} \, \text{is increasing} )\\
        \Leftrightarrow \quad & \hat{\lambda}_i \geq 
        \frac{\lambda^{(1)}_i+\lambda^{(2)}_i}{2},
    \end{align*}
    which indicates $\hat{x}(T) \geq \frac{x^{(1)}(T) + x^{(2)}(T)}{2}$.
    On the other hand, by inequality \eqref{general_mean_ineq} from Assumption \ref{assumption_cost}, we have $c_i(\hat{\lambda}_i) \leq \frac{c_i(\lambda^{(1)}_i) + c_i(\lambda^{(2)}_i)}{2}$. Therefore, with the same initial condition $\frac{y^{(1)}+y^{(2)}}{2}$, we have $\hat{y}(T) \leq \frac{y^{(1)}(T)+y^{(2)}(T)}{2}$.
    
    By the monotonicity and concavity of $U_P$ and $\ell$, together with the monotonicity and convexity of $U_A^{-1}$, we have
    \begin{align*}
        &\quad w^b(\frac{1}{2}(x^{(1)}+x^{(2)}), \frac{1}{2}(y^{(1)}+y^{(2)}), t) \\
        &\geq
        \mathbb{E}\bigg[U_P\bigg(\ell(\hat{x}(T)) - U_A^{-1}\big(\hat{y}(T)\big)\bigg)\bigg]\\
        &\geq \mathbb{E}\bigg[U_P\bigg(\ell\left(\frac{x^{(1)}(T) + x^{(2)}(T)}{2}\right) - U_A^{-1}\big(\frac{y^{(1)}(T)+y^{(2)}(T)}{2}\big)\bigg)\bigg]\\
        &\geq \mathbb{E}\bigg[U_P\bigg(\frac{\ell\left(x^{(1)}(T)\right) + \ell\left(x^{(2)}(T)\right)}{2} -\frac{U_A^{-1}\big(y^{(1)}(T)\big)+U_A^{-1}\big(y^{(2)}(T)\big)}{2}\bigg)\bigg]\\
        &= \mathbb{E}\bigg[U_P\bigg(\frac{\bigg[\ell\left(x^{(1)}(T)\right) - U_A^{-1}\big(y^{(1)}(T)\big)\bigg] + \bigg[\ell\left(x^{(2)}(T)\right) - U_A^{-1}\big(y^{(2)}(T)\big)\bigg]}{2} \bigg)\bigg]\\
        &\geq \frac{1}{2} \mathbb{E}\bigg[U_P\bigg(\ell\left(x^{(1)}(T)\right) - U_A^{-1}\big(y^{(1)}(T)\big) \bigg)\bigg]\\
        &\ \ + \frac{1}{2} \mathbb{E}\bigg[U_P\bigg(\ell\left(x^{(2)}(T)\right) - U_A^{-1}\big(y^{(2)}(T)\big) \bigg)\bigg].
    \end{align*}
    Taking the supremum over $\lambda^{(1)}$ and $\lambda^{(2)}$, we have
    $$
    w^b(\frac{1}{2}(x^{(1)}+x^{(2)}), \frac{1}{2}(y^{(1)}+y^{(2)}), t) \geq \frac{w^b(x^{(1)},y^{(1)},t) + w^b(x^{(2)},y^{(2)},t)}{2},
    $$
    which establishes the concavity of $w^b(x,y,t)$ with respect to $(x,y)$. Hence, the Hessian matrix of $w^b$ with respect to $x$ and $y$,
    \[
    \nabla^2_{(x,y)} w^b = 
    \begin{pmatrix}
    w^b_{xx} & w^b_{xy} \\
    (w^b_{xy})^\top & w^b_{yy}
    \end{pmatrix}
    \]
    is negative semidefinite, which proves the first part of the lemma.
    
    Moreover, since every principal submatrix of a negative semidefinite matrix must also be negative semidefinite, it follows that $w^b_{yy} \leq 0$, $w^b_{x_j x_j} \leq 0$, $(w^b_{x_j y})^2 \leq w^b_{x_j x_j} w^b_{yy}$, and $(w^b_{x_j x_k})^2 \leq w^b_{x_j x_j} w^b_{x_kx_k}$ for $j,k \in \{1,2,\dots,d\}$ and $j \neq k$.
    
                            \end{proof}

\section{Proof of Lemma \ref{lem_viscosity_partial_derivatives}}
\begin{proof} [Proof of Lemma \ref{lem_viscosity_partial_derivatives}]
    \textbf{Step 1: We show that $p^+_{j,\Delta}(x,y,t)$, $p^-_{j,\Delta}(x,y,t)$, $q^+_\Delta(x,y,t)$, $q^-_\Delta(x,y,t)$, $s_{j,\Delta}(x,y,t)$, $r_\Delta(x,y,t)$ satisfy the growth condition in PDE \eqref{linear_pde_w^b}.}\\
    Define $f(x,y) = U_P\big(\ell(x)-U_A^{-1}(y)\big)$. Then $w^b(x,y,t) = \sup_{\Lambda \in \mathcal{A}^b} \mathbb{E}\big[f(x^\lambda(T), y^\lambda(T))\big]$ and
    \begin{align}
        f_{x_j} &= U_P'\big(\ell(x)-U_A^{-1}(y)\big) \ell_{x_j}(x) > 0, \\
        f_y &= -U_P'\big(\ell(x)-U_A^{-1}(y)\big) \cdot (U_A^{-1})'(y), \\
        f_{x_j x_j} &= U_P''\big(\ell(x)-U_A^{-1}(y)\big) \left(\ell_{x_j}(x)\right)^2 + U_P'\big(\ell(x)-U_A^{-1}(y)\big) \ell_{x_j x_j}(x) \leq 0, \\
        f_{yy} &= U_P''\big(\ell(x)-U_A^{-1}(y)\big)\cdot \big((U_A^{-1})'(y)\big)^2 - U_P'\big(\ell(x)-U_A^{-1}(y)\big)\cdot(U_A^{-1})''(y).
    \end{align}
    By Assumption \ref{assumptionUA}, Assumption \ref{assumptionUP} and Remark \ref{rmk_growth_U_P'}, we also have
    \begin{equation} \label{growth_partial_derivatives_terminal}
    |\frac{\partial^{\alpha+\beta} f}{\partial x_j^{\alpha} \partial y^{\beta}}| \leq K e^{\gamma(\|x\| + |y|)},
    \end{equation}
    for some real numbers $K,\gamma > 0$, and non-negative integers $\alpha,\beta$ satisfying $\alpha+\beta \in \{1, 2\}$.
        For any $\eta > 0$, by the definition of the supremum, there exists an $\eta$-optimal control $\Lambda^\eta = \{\lambda^\eta(s)\}_{s=t}^T \in \mathcal{A}^b(x,y,t)$ such that
    $$
    w^b(x,y,t) = \sup_{\Lambda \in \mathcal{A}^b(x,y,t)} \mathbb{E}\big[f(x^\lambda(T), y^\lambda(T))\big] \leq \mathbb{E}\big[f(x^{\lambda^\eta}(T), y^{\lambda^\eta}(T))\big] + \eta.
    $$
    Moreover, since $\Lambda^\eta \in \mathcal{A}^b(x+\Delta e_j,y,t)$, we have
    \begin{align}
        w^b(x+\Delta e_j, y, t) &= \sup_{\Lambda \in \mathcal{A}^b(x+\Delta e_j,y,t)} \mathbb{E}\bigg[f\big(x^\lambda(T), y^\lambda(T)\big)\bigg] \geq \mathbb{E}\bigg[f\big(x^{\lambda^\eta}(T)+\Delta e_j, y^{\lambda^\eta}(T)\big)\bigg].
    \end{align}
    It follows that
    \[
    p^+_{j,\Delta}(x,y,t) = \frac{w^b(x+\Delta e_j, y, t) - w^b(x, y, t)}{\Delta} \geq \mathbb{E}\Bigg[\frac{f\big(x^{\lambda^\eta}(T)+\Delta e_j, y^{\lambda^\eta}(T)\big) - f\big(x^{\lambda^\eta}(T), y^{\lambda^\eta}(T)\big)}{\Delta}\Bigg] - \frac{\eta}{\Delta}.
    \]
    By the Mean Value Theorem, the difference quotient equals $f_{x_j}(x^{\lambda^\eta}(T) + \xi_1 e_j, y^{\lambda^\eta}(T))$ for some $\xi_1 \in (0,\Delta)$. 
    
    Since $0 <\Delta < 1$, it follows from \eqref{growth_partial_derivatives_terminal} that
    $$
    |f_{x_j}(x^{\lambda^\eta}(T) + \xi_1 e_j, y^{\lambda^\eta}(T))| \leq K e^{\gamma(\|x^{\lambda^\eta}(T)\| + 1 + |y^{\lambda^\eta}(T)|)}.
    $$
    We claim that the right-hand side is integrable. To this end, we first consider $\mathbb{E}[e^{\|2\gamma x^{\lambda^\eta}(T)\|}]$. Notice that
    \begin{align}
        \|2\gamma x^{\lambda^\eta}(T)\| &= \left \| 2\gamma \int_t^T \lambda^\eta(s) ds + 2\gamma \sigma (\mathcal{B}(T)-\mathcal{B}(t)) + 2\gamma \sqrt{\epsilon} \sigma (\mathcal{W}(T)-\mathcal{W}(t)) + 2\gamma x\right\|\\
        &\leq 2\gamma \sqrt{d} c_\lambda (T-t) + \|2\gamma\sigma (\mathcal{B}(T)-\mathcal{B}(t))\| +  \|2\gamma \sqrt{\epsilon} \sigma (\mathcal{W}(T)-\mathcal{W}(t))\| + 2\gamma \|x\|,
    \end{align}
    and $2\gamma \sigma (\mathcal{B}(T)-\mathcal{B}(t)) \sim \mathcal{N}(0, 4 \gamma^2 \sigma \sigma^\top(T-t))$. Moreover, we have 
    \begin{align}
    \|2\gamma \sigma (\mathcal{B}(T)-\mathcal{B}(t))\| = \sqrt{\sum_{i=1}^d \left(2\gamma \sigma (\mathcal{B}(T)-\mathcal{B}(t))\right)_i^2} &\leq \sqrt{d \cdot \max_{1 \le i \le d} \left(2\gamma \sigma (\mathcal{B}(T)-\mathcal{B}(t))\right)_i^2}\\
    &= \sqrt{d} \max_{1 \le i \le d} \left|\left(2\gamma \sigma (\mathcal{B}(T)-\mathcal{B}(t))\right)_i\right|.
    \end{align}
    It follows that $\left(2\gamma \sigma (\mathcal{B}(T)-\mathcal{B}(t))\right)_i \sim \mathcal{N}\left(0, 4 \gamma^2 \sigma_i^2 (T-t)\right)$. Additionally, we have
    \begin{align}
        &\quad  \mathbb{E}[e^{\|2\gamma \sigma (\mathcal{B}(T)-\mathcal{B}(t))\|}] \leq \mathbb{E}[e^{ \sqrt{d} \max_{1 \le i \le d} \left|\left(2\gamma \sigma (\mathcal{B}(T)-\mathcal{B}(t))\right)_i\right|}] \\
        &= \mathbb{E}[ \max_{1 \le i \le d} e^{\sqrt{d}\left|\left(2\gamma \sigma (\mathcal{B}(T)-\mathcal{B}(t))\right)_i\right|}]\\
        &\leq \sum_{i=1}^d \mathbb{E}[ e^{\sqrt{d}\left|\left(2\gamma \sigma (\mathcal{B}(T)-\mathcal{B}(t))\right)_i\right|}]\\
        &= 2 \sum_{i=1}^d e^{{2 d \gamma^2 \sigma_i^2 (T-t)}} \Phi \left(2\sqrt{d}\gamma |\sigma_i| \sqrt{T-t}\right),
    \end{align}
    and
    \begin{align}
        \mathbb{E}[e^{\|2\gamma \sqrt{\epsilon} \sigma (\mathcal{W}(T)-\mathcal{W}(t))\|}] &\leq \mathbb{E}[e^{ \sqrt{d} \max_{1 \le i \le d} \left|\left(2\gamma \sqrt{\epsilon} \sigma (\mathcal{W}(T)-\mathcal{W}(t))\right)_i\right|}]\\
        &= \mathbb{E}[ \max_{1 \le i \le d} e^{\sqrt{d}\left|\left(2\gamma \sqrt{\epsilon} \sigma (\mathcal{W}(T)-\mathcal{W}(t))\right)_i\right|}]\\
        &\leq \sum_{i=1}^d \mathbb{E}[ e^{\sqrt{d}\left|\left(2\gamma \sqrt{\epsilon} \sigma (\mathcal{W}(T)-\mathcal{W}(t))\right)_i\right|}]\\
        &= 2 \sum_{i=1}^d e^{{2 d \gamma^2 \epsilon \sigma_i^2 (T-t)}} \Phi \left(2\sqrt{d}\gamma \sqrt{\epsilon} |\sigma_i| \sqrt{T-t}\right).
    \end{align}
    Therefore, noting the independence of $\mathcal{B}(T)-\mathcal{B}(t)$ and $\mathcal{W}(T)-\mathcal{W}(t)$, we have
    \begin{align}
        \mathbb{E}[e^{\|2\gamma x^{\lambda^\eta}(T)\|}] \leq e^{2\gamma \sqrt{d} c_\lambda (T-t)} \mathbb{E}[e^{\|2\gamma \sigma (\mathcal{B}(T)-\mathcal{B}(t))\|}]
        \mathbb{E}[e^{\|2\gamma \sqrt{\epsilon} \sigma (\mathcal{W}(T)-\mathcal{W}(t))\|}] e^{2\gamma \|x\|} =: \tilde{K} e^{2\gamma \|x\|}.
    \end{align}
    We next consider $\mathbb{E}[e^{ 2 \gamma |y^{\lambda^\eta}(T)|}]$. For $s \in [t,T]$, define the martingale $M_s = \int_t^s 2\gamma \nabla c(\lambda^\eta(r))^\top \sigma d\mathcal{B}(r)$ with quadratic variation $\langle M \rangle_s = \int_t^s \|2\gamma \sigma^\top \nabla c(\lambda^\eta(r))\|^2 dr \leq 4\gamma^2 \sum_{i=1}^d \sigma_i^2 (c'_i(c_\lambda))^2(s-t)$. Notice that $Z_s = e^{M_s - \frac{1}{2}\langle M \rangle_s}$ is also a martingale because the integrand is bounded. It follows that
    \begin{align}
        \mathbb{E}[e^{M_T}] = \mathbb{E}[Z_T \cdot e^{\frac{1}{2}\langle M \rangle_T}] &\leq \mathbb{E}[Z_T \cdot e^{2 \gamma^2 \sum_{i=1}^d \sigma_i^2 (c'_i(c_\lambda))^2(T-t)}] = \mathbb{E}[Z_T] \cdot e^{2 \gamma^2 \sum_{i=1}^d \sigma_i^2 (c'_i(c_\lambda))^2(T-t)} \\
        &= Z_t \cdot e^{2 \gamma^2 \sum_{i=1}^d \sigma_i^2 (c'_i(c_\lambda))^2 (T-t)} = e^{2 \gamma^2 \sum_{i=1}^d \sigma_i^2 (c'_i(c_\lambda))^2(T-t)}.
    \end{align}
    The same argument yields that $\mathbb{E}[e^{-M_T}]$ is also bounded above. It follows from $e^{|M_T|} \leq e^{M_T} + e^{-M_T}$ that 
    $$
    \mathbb{E}[e^{|M_T|}] \leq 2 \cdot e^{2 \gamma^2 \sum_{i=1}^d \sigma_i^2 (c'_i(c_\lambda))^2(T-t)}.
    $$
    Therefore,
    \begin{align}
        \mathbb{E}[e^{ 2\gamma |y^{\lambda^\eta}(T)|}] & \leq  e^{2\gamma c(c_\lambda \mathbf{1})(T-t)} \cdot e^{2\gamma |y|} \cdot \mathbb{E}[e^{ 2\gamma |\int_t^T \nabla c(\lambda^\eta(s))^\top \sigma d\mathcal{B}(s)|}] = e^{2\gamma c(c_\lambda \mathbf{1})(T-t)} \cdot e^{2\gamma |y|} \cdot \mathbb{E}[e^{ |M_T|}]\\
        &\leq e^{2\gamma c(c_\lambda \mathbf{1})(T-t)} \cdot e^{2\gamma |y|} \cdot 2 \cdot e^{2 \gamma^2 \sum_{i=1}^d \sigma_i^2 (c'_i(c_\lambda))^2(T-t)}.
    \end{align}
    It follows from the Cauchy--Schwarz inequality that
    \begin{align} \label{condition_DCT}
        \mathbb{E}[K e^{\gamma(\|x^{\lambda^\eta}(T)\| + 1 + |y^{\lambda^\eta}(T)|)}] &\leq K \cdot e^{\gamma} \cdot \mathbb{E}[e^{2\gamma \|x^{\lambda^\eta}(T)\|}]^{\frac{1}{2}} \cdot \mathbb{E}[e^{2\gamma |y^{\lambda^\eta}(T)|}]^{\frac{1}{2}}\\
                & \leq K \cdot e^{\gamma} \cdot \sqrt{\tilde{K}} \cdot e^{\gamma \|x\|} \cdot e^{\gamma c(c_\lambda \mathbf{1})(T-t)} \cdot e^{\gamma |y|} \cdot \sqrt{2} \cdot e^{ \gamma^2 \sum_{i=1}^d \sigma_i^2 (c'_i(c_\lambda))^2(T-t)}\\
        &=: \kappa e^{\gamma(\|x\|+|y|)} < \infty.
    \end{align}
    Therefore, we get
    \[
    \frac{w^b(x+\Delta e_j, y, t) - w^b(x, y, t)}{\Delta} \geq \mathbb{E}[-K e^{\gamma(\|x^{\lambda^\eta}(T)\| + 1 + |y^{\lambda^\eta}(T)|)}] - \frac{\eta}{\Delta} \geq -\kappa e^{\gamma(\|x\|+|y|)} - \frac{\eta}{\Delta}.
    \]
    Since $\kappa$ and $\gamma$ are independent of $\eta$, taking the limit as $\eta \rightarrow 0$ yields
    \[
    \frac{w^b(x+\Delta e_j, y, t) - w^b(x, y, t)}{\Delta} \geq -\kappa e^{\gamma(\|x\|+|y|)}.
    \]
    Similarly, there exists $\Lambda^+_b = \{\lambda^+_b(s)\}_{s=t}^T$ such that
    \begin{align}
        w^b(x+\Delta e_j, y, t) &= \sup_{\Lambda \in \mathcal{A}^b(x+\Delta e_j,y,t)} \mathbb{E}\bigg[f\big(x^\lambda(T), y^\lambda(T)\big)\bigg] \leq \mathbb{E}\bigg[f\big(x^{\lambda^+_b}(T)+\Delta e_j, y^{\lambda^+_b}(T)\big)\bigg] + \eta.
    \end{align}
    The same argument yields
    \begin{align}
    \frac{w^b(x+\Delta e_j, y, t) - w^b(x, y, t)}{\Delta} &\leq \mathbb{E}\Bigg[\frac{f\big(x^{\lambda^+_b}(T)+\Delta e_j, y^{\lambda^+_b}(T)\big) - f\big(x^{\lambda^+_b}(T), y^{\lambda^+_b}(T)\big)}{\Delta}\Bigg] + \frac{\eta}{\Delta}\\
    &= \mathbb{E}\Bigg[f_{x_j}(x^{\lambda^+_b}(T)+\hat{\xi}_1 e_j, y^{\lambda^+_b}(T))\Bigg] + \frac{\eta}{\Delta}\\
    &\leq \mathbb{E}\Bigg[K e^{\gamma(\|x^{\lambda^+_b}(T)\| + 1 + |y^{\lambda^+_b}(T)|)}\Bigg] + \frac{\eta}{\Delta}\\
    &\leq \kappa e^{\gamma(\|x\|+|y|)} + \frac{\eta}{\Delta},
    \end{align}
    for some $\hat{\xi}_1 \in (0, \Delta)$.
    Since $\kappa$ and $\gamma$ are independent of $\eta$, taking the limit as $\eta \rightarrow 0$ yields
    \begin{align}
    \frac{w^b(x+\Delta e_j, y, t) - w^b(x, y, t)}{\Delta} &\leq \kappa e^{\gamma(\|x\|+|y|)}.
    \end{align}
    Therefore, we conclude that
    $$
    |p^+_{j,\Delta}(x,y,t)| = \left|\frac{w^b(x+\Delta e_j, y, t) - w^b(x, y, t)}{\Delta}\right| \leq \kappa e^{\gamma(\|x\|+|y|)}.
    $$
           Similarly, we have
    \begin{align}
    |p^-_{j,\Delta}(x,y,t)| &= \left|\frac{w^b(x, y, t) - w^b(x-\Delta e_j, y, t)}{\Delta}\right| \leq \kappa e^{\gamma(\|x\|+|y|)},\\
    |q^+_\Delta(x,y,t)| &= \left|\frac{w^b(x, y+\Delta, t) - w^b(x, y, t)}{\Delta}\right| \leq \kappa e^{\gamma(\|x\|+|y|)},\\
    |q^-_\Delta(x,y,t)| &= \left|\frac{w^b(x, y, t) - w^b(x, y-\Delta, t)}{\Delta}\right| \leq \kappa e^{\gamma(\|x\|+|y|)}.
    \end{align}
    
    Additionally, we have
    \begin{align}
        s_{j,\Delta}(x,y,t) &= \frac{w^b(x+\Delta e_j,y,t) + w^b(x-\Delta e_j,y,t) - 2w^b(x,y,t)}{\Delta^2}\\
        &\geq \mathbb{E}\Bigg[\frac{f\big(x^{\lambda^\eta}(T)+\Delta e_j,y^{\lambda^\eta}(T)\big)+f\big(x^{\lambda^\eta}(T)-\Delta e_j,y^{\lambda^\eta}(T)\big)-2f\big(x^{\lambda^\eta}(T),y^{\lambda^\eta}(T)\big)}{\Delta^2}\Bigg] - \frac{2\eta}{\Delta^2}.
    \end{align}
    By the Mean Value Theorem, there exists $\xi_2 \in (-\Delta, \Delta)$ such that the term inside the expectation equals $f_{x_jx_j}(x^{\lambda^\eta}(T) + \xi_2 e_j, y^{\lambda^\eta}(T))$.

    By \eqref{growth_partial_derivatives_terminal} and the same argument for the first-order terms, we have
    $$
    |f_{x_jx_j}(x^{\lambda^\eta}(T) + \xi_2 e_j, y^{\lambda^\eta}(T))| \leq K e^{\gamma(\|x^{\lambda^\eta}(T)\| + 1 + |y^{\lambda^\eta}(T)|)},
    $$
    and it follows analogously that
    $$
    s_{j,\Delta}(x,y,t) \geq -\kappa e^{\gamma(\|x\|+|y|)} - \frac{2\eta}{\Delta^2}.
    $$
    Since $\kappa$ and $\gamma$ are independent of $\eta$, taking the limit as $\eta \rightarrow 0$ yields
    $$
    s_{j,\Delta}(x,y,t) \geq -\kappa e^{\gamma(\|x\|+|y|)}.
    $$
    Since $w^b$ is concave from Lemma \ref{lemma_sec_derivative_w^b}, we have
    \begin{align}
        s_{j,\Delta}(x,y,t) &= \frac{w^b(x+\Delta e_j,y,t) + w^b(x-\Delta e_j,y,t) - 2w^b(x,y,t)}{\Delta^2}\\
        &= w^b_{x_jx_j}(x + \hat{\xi}_2 e_j,y,t) \leq 0,
    \end{align}
    for some $\hat{\xi}_2 \in (-\Delta, \Delta)$.
    This establishes the exponential growth condition for $s_{j,\Delta}$. 
    
    A strictly analogous argument yields:
    \[
    0 \geq r_\Delta(x,y,t) \geq -\kappa e^{\gamma(\|x\| + |y|)}.
    \]

    \textbf{Step 2: We show that $p^{-}_{j,\Delta}(x,y,t)$, $q^{-}_\Delta(x,y,t)$ are sub-solutions, while $p^{+}_{j,\Delta}(x,y,t)$, $q^{+}_\Delta(x,y,t)$, $s_{j,\Delta}(x,y,t)$, $r_\Delta(x,y,t)$ are super-solutions of PDE \eqref{linear_pde_w^b}.}\\
    Recall the notation:
    \begin{align*}
        L(D^2w, Dw, \lambda) &=\frac{1+\epsilon}{2} \Tr[\sigma \sigma^\top w_{xx}] + [\sigma \sigma^\top \nabla c(\lambda)]^\top w_{xy} + \frac{1}{2} \|\sigma^\top \nabla c(\lambda)\|^2 w_{yy} + \lambda^\top w_x + c(\lambda) w_y =: L(w, \lambda).
    \end{align*}
    Then we have
\begin{align}
    w^b_t(x,y,t) + L(w^b(x,y,t), \lambda_b^*(x,y,t)) &= 0,\\
    w^b_t(x+\Delta e_j,y,t) + L(w^b(x+\Delta e_j,y,t), \lambda_b^*(x,y,t)) &\leq 0,\\
    w^b_t(x-\Delta e_j,y,t) + L(w^b(x-\Delta e_j,y,t), \lambda_b^*(x,y,t)) &\leq 0,
\end{align}
and it follows that 
\begin{align}
    (p^{+}_{j,\Delta})_t(x,y,t) + L(p^{+}_{j,\Delta}(x,y,t), \lambda_b^*(x,y,t)) &\leq 0,\\
    (p^{-}_{j,\Delta})_t(x,y,t) + L(p^{-}_{j,\Delta}(x,y,t), \lambda_b^*(x,y,t)) &\geq 0.
\end{align}
Thus, following the same argument, we have $p^{-}_{j,\Delta}(x,y,t)$, $q^{-}_\Delta(x,y,t)$ are sub-solutions, while $p^{+}_{j,\Delta}(x,y,t)$, $q^{+}_\Delta(x,y,t)$ are super-solutions of PDE \eqref{linear_pde_w^b} with the following terminal conditions
\begin{align*}
    p^{+}_{j,\Delta}(x,y,T) &= \frac{U_P(\ell(x + \Delta e_j) - U_A^{-1}(y)) - U_P(\ell(x) - U_A^{-1}(y))}{\Delta}\\
    &= U_P'(\ell(x + \delta_1 e_j) - U_A^{-1}(y)) \ell_{x_j}(x + \delta_1 e_j) > 0,\\
    p^{-}_{j,\Delta}(x,y,T) &= \frac{U_P(\ell(x) - U_A^{-1}(y)) - U_P(\ell(x - \Delta e_j) - U_A^{-1}(y))}{\Delta}\\
    &= U_P'(\ell(x - \delta_2 e_j) - U_A^{-1}(y)) \ell_{x_j}(x - \delta_2 e_j),\\  
    q^{+}_\Delta(x,y,T) &= \frac{U_P(\ell(x) - U_A^{-1}(y + \Delta)) - U_P(\ell(x) - U_A^{-1}(y))}{\Delta}\\
    &= -U_P'(\ell(x) - U_A^{-1}(\delta_3))(U_A^{-1})'(\delta_3),\\ 
    q^{-}_\Delta(x,y,T) &= \frac{U_P(\ell(x) - U_A^{-1}(y)) - U_P(\ell(x) - U_A^{-1}(y - \Delta))}{\Delta}\\
    &= -U_P'(\ell(x) - U_A^{-1}(\delta_4))(U_A^{-1})'(\delta_4) < 0,
\end{align*}
for some $\delta_1, \delta_2 \in (0, \Delta)$, $\delta_3 \in (y,y+\Delta)$, $\delta_4 \in (y-\Delta,y)$.
We also have
\begin{align}
    w^b_t(x,y,t) + L(w^b(x,y,t), \lambda_b^*(x,y,t)) &= 0,\\
    w^b_t(x+\Delta e_j,y,t) + L(w^b(x+\Delta e_j,y,t), \lambda_b^*(x,y,t)) &\leq 0,\\
    w^b_t(x-\Delta e_j,y,t) + L(w^b(x-\Delta e_j,y,t), \lambda_b^*(x,y,t)) &\leq 0.
\end{align}
It follows that 
$$
(s_{j,\Delta})_t(x,y,t) + L(s_{j,\Delta}(x,y,t), \lambda_b^*(x,y,t)) \leq 0,
$$
showing that $s_{j,\Delta}(x,y,t)$ is a supersolution of the PDE \eqref{linear_pde_w^b} with terminal condition 
\begin{align*}
    s_{j,\Delta}(x,y,T) &= \frac{U_P(\ell(x + \Delta e_j) - U_A^{-1}(y)) + U_P(\ell(x - \Delta e_j) - U_A^{-1}(y)) - 2 U_P(\ell(x) - U_A^{-1}(y))}{\Delta^2}\\     &= U_P''(\ell(x + \delta_5 e_j) - U_A^{-1}(y)) \big(\ell_{x_j}(x + \delta_5 e_j)\big)^2 + U_P'(\ell(x + \delta_5 e_j) - U_A^{-1}(y)) \ell_{x_j x_j}(x + \delta_5 e_j),\\
\end{align*}
for some $\delta_5 \in (-\Delta,\Delta)$. Similarly, $r_\Delta(x,y,t)$ is also a supersolution of the PDE \eqref{linear_pde_w^b} with terminal condition
\begin{align*}
    r_\Delta(x,y,T) &= \frac{U_P(\ell(x) - U_A^{-1}(y + \Delta)) + U_P(\ell(x) - U_A^{-1}(y - \Delta)) - 2 U_P(\ell(x) - U_A^{-1}(y))}{\Delta^2}\\     &= U_P''(\ell(x) - U_A^{-1}(\delta_6))\cdot \big((U_A^{-1})'(\delta_6)\big)^2 - U_P'(\ell(x) - U_A^{-1}(\delta_6))\cdot(U_A^{-1})''(\delta_6),
\end{align*}
for some $\delta_6 \in (y-\Delta,y+\Delta)$.
\end{proof}

\section{Proof of Theorem \ref{thm_cvg_w_i_w}}
\begin{proof}
\textbf{Proof of (i):} On the one hand, since $\mathcal{A}_k^b(x, y, t) \subset \mathcal{A}(x, y, t)$, we have $w^\epsilon(x,y,t) \geq w_k^{b}(x,y,t)$.

On the other hand, fix $\hat{\epsilon} > 0$ and $(x,y,t) \in \mathbb{R}^d \times \mathbb{R} \times [0,T]$, and choose $\Lambda = \{\lambda(s)\}_{s=t}^T \in  \mathcal{A}(x, y, t)$ satisfying
\begin{align*}
    w^\epsilon(x, y, t)&=\sup _{\Lambda\in \mathcal{A}(x, y, t)} \mathbb{E}\left[U_P\big(\ell \left(x^{\lambda}(T)\right)-U_A^{-1}(y^{\lambda}(T))\big)\right]\\
    &\leq \mathbb{E}\left[U_P\big(\ell \left(x^{\lambda}(T)\right)-U_A^{-1}(y^{\lambda}(T))\big)\right] + \hat{\epsilon}.
\end{align*}
For every $k \in \mathbb{N}$ and each component $i \in \{1,2,\dots,d\}$, define $\lambda_{k,i} = \lambda_i \mathbf{1}_{\{\frac{1}{k} \leq \lambda_i \leq k\}} + \frac{1}{k} \mathbf{1}_{\{\lambda_i < \frac{1}{k}\}} + k \mathbf{1}_{\{\lambda_i > k\}}$. By construction, $\Lambda_k = \{\lambda_k(s)\}_{s=t}^T \in \mathcal{A}^b_k(x, y, t)$.
We first establish the following lemma.
\begin{lemma} \label{lemma_cvg_lambda}
    \begin{align*}
        \lim_{k \rightarrow \infty}\mathbb{E}\left[\int_t^T \left\|\lambda(s) - \lambda_k(s)\right\|^2 ds\right] &= \lim_{k \rightarrow \infty}\mathbb{E}\left[\int_t^T \bigg(c\big(\lambda(s)\big) - c\big(\lambda_k(s)\big)\bigg)^2 ds\right]\\
        &= \lim_{k \rightarrow \infty}\mathbb{E}\left[\int_t^T \left\|\nabla c\big(\lambda(s)\big) - \nabla c\big(\lambda_k(s)\big)\right\|^2 ds\right] = 0.
    \end{align*}
\end{lemma}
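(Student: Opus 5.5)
The plan is to prove each of the three limits by dominated convergence on the product space $\Omega\times[t,T]$ with measure $dP\otimes ds$. Fix $(s,\omega)$. Since $\lambda(s)\in[0,\infty)^d$ is finite, for every $k$ larger than $\|\lambda(s)\|_\infty$ the truncated value $\lambda_{k,i}(s)$ is either $\lambda_i(s)$ or $\frac1k$. The latter occurs only when $\lambda_i(s)<\frac1k$. Hence $\lambda_k(s)\to\lambda(s)$ pointwise. Since $c$ and $\nabla c$ are continuous on $[0,\infty)^d$ (the $c_i$ are at least $C^3$ by Assumption \ref{assumption_cost}), the integrands
\[
\|\lambda-\lambda_k\|^2,\qquad (c(\lambda)-c(\lambda_k))^2,\qquad \|\nabla c(\lambda)-\nabla c(\lambda_k)\|^2
\]
converge to $0$ pointwise, $dP\otimes ds$-a.e.

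It remains to find integrable dominating functions, using componentwise monotonicity. For each $i$, $|\lambda_{k,i}-\lambda_i|\le \max\{\lambda_i,\frac1k\}\le \lambda_i+1$, so $\|\lambda-\lambda_k\|^2\le 2\|\lambda\|^2+2d$. This is integrable by \eqref{assumption_lambda} together with $T-t<\infty$. For the cost, $c_i$ is increasing on $[0,\infty)$, so $c_i(\lambda_{k,i})\le \max\{c_i(\lambda_i),c_i(1)\}$ for $k\ge1$. Indeed, in the case $\lambda_{k,i}=\frac1k$ we have $c_i(\frac1k)\le c_i(1)$, and in the case $\lambda_{k,i}=k<\lambda_i$ we have $c_i(k)\le c_i(\lambda_i)$. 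Since $c_i\ge0$, this gives $c(\lambda_k)\le c(\lambda)+c(\mathbf 1)$ and therefore
\[
(c(\lambda)-c(\lambda_k))^2\le 2c(\lambda)^2+2(c(\lambda)+c(\mathbf 1))^2,
\]
which is integrable by the second condition in \eqref{assumption_lambda}. The same argument applies to $c_i'$, which is nonnegative and increasing, and gives $c_i'(\lambda_{k,i})\le c_i'(\lambda_i)+c_i'(1)$. Hence $\|\nabla c(\lambda)-\nabla c(\lambda_k)\|^2\le 4\|\nabla c(\lambda)\|^2+C$, which is integrable by the third condition in \eqref{assumption_lambda}.

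Applying dominated convergence to each of the three terms then yields the lemma. The only step that needs care is the domination. A naive bound on $c(\lambda_k)$ through a Taylor expansion would produce growth in $k$; monotonicity of $c_i$ and $c_i'$ avoids this, because the truncation can only move $\lambda_i$ toward the interval $[\frac1k,k]$, and the resulting bound is uniform in $k$. The possible upward shift to $\frac1k$ costs at most the fixed constants $c_i(1)$ and $c_i'(1)$. I would also note in passing that measurability and progressive measurability of $\lambda_k$ are inherited from $\lambda$, since truncation is a continuous map. The membership $\Lambda_k\in\mathcal A^b_k$ is already asserted in the text, so the lemma itself needs only the three convergences.
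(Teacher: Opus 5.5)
Your proof is correct and takes the same route as the paper: dominated convergence with dominating functions of the form $\lambda_i^2$ plus a constant. The paper applies Fubini and then dominated convergence twice (in $\omega$ for a.e.\ $s$, then in $s$), whereas you apply it once on $\Omega\times[t,T]$, and your monotonicity argument for $c_i$ and $c_i'$ makes explicit the domination that the paper covers only by ``the same argument'' for the cost and gradient terms.
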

\begin{proof} [Proof of Lemma \ref{lemma_cvg_lambda}]
    By \eqref{assumption_lambda} and Fubini's theorem, we have
    \begin{equation}
    \mathbb{E}\left[\left\|\lambda(s)\right\|^2\right] < \infty \quad \text{for almost every} \ s \in [t,T].
    \end{equation}
    
    For each component $i$, it follows from Fubini's theorem that
    \begin{align*}
        &\lim_{k \rightarrow \infty}\mathbb{E}\left[\int_t^T \left(\lambda_i(s) - \lambda_{k,i}(s)\right)^2 ds\right]\\
        =& \lim_{k \rightarrow \infty}\mathbb{E}\left[\int_t^T \left(\lambda_i(s) - k\right)^2  \mathbf{1}_{\{\lambda_i > k\}} ds + \int_t^T \left(\lambda_i(s) - \frac{1}{k}\right)^2  \mathbf{1}_{\{\lambda_i < \frac{1}{k}\}} ds\right]\\
        =& \lim_{k \rightarrow \infty}\int_t^T \mathbb{E}\left[\left(\lambda_i(s)-k\right)^2  \mathbf{1}_{\{\lambda_i > k\}}\right] ds + \lim_{k \rightarrow \infty} \int_t^T \mathbb{E}\left[\left(\lambda_i(s)- \frac{1}{k}\right)^2  \mathbf{1}_{\{\lambda_i < \frac{1}{k}\}}\right] ds.
    \end{align*}
    Notice that we have $\mathbb{E}\left[\left(\lambda_i(s)-k\right)^2  \mathbf{1}_{\{\lambda_i > k\}}\right] \leq \mathbb{E}\left[\left(\lambda_i(s)\right)^2\right]$ and $\mathbb{E}\left[\left(\lambda_i(s)-\frac{1}{k}\right)^2  \mathbf{1}_{\{\lambda_i < \frac{1}{k}\}}\right] \leq 1$. It follows from Fubini's theorem that $\int_t^T\mathbb{E}\left[\left(\lambda_i(s)\right)^2\right]ds = \mathbb{E}\left[\int_t^T\left(\lambda_i(s)\right)^2 ds\right] < \infty$. Hence, by the Dominated Convergence Theorem, we have
    \begin{align*}
    \lim_{k \rightarrow \infty}\int_t^T \mathbb{E}\left[\left(\lambda_i(s)-k\right)^2  \mathbf{1}_{\{\lambda_i > k\}}\right] ds &= \int_t^T \lim_{k \rightarrow \infty} \mathbb{E}\left[\left(\lambda_i(s)-k\right)^2  \mathbf{1}_{\{\lambda_i > k\}}\right] ds\\
    &= \int_t^T \lim_{k \rightarrow \infty} \mathbb{E}\left[\left(\lambda_i(s)-k\right)^2  \mathbf{1}_{\{\lambda_i > k\}}\right] \mathbf{1}_{\{\mathbb{E}\left[\left(\lambda_i(s)\right)^2\right] < \infty\}} ds,
    \end{align*}
    and
    \begin{align*}
    \lim_{k \rightarrow \infty}\int_t^T \mathbb{E}\left[\left(\lambda_i(s)-\frac{1}{k}\right)^2  \mathbf{1}_{\{\lambda_i < \frac{1}{k}\}}\right] ds &= \int_t^T \lim_{k \rightarrow \infty} \mathbb{E}\left[\left(\lambda_i(s)-\frac{1}{k}\right)^2  \mathbf{1}_{\{\lambda_i < \frac{1}{k}\}}\right] ds\\
    &= \int_t^T \lim_{k \rightarrow \infty} \mathbb{E}\left[\left(\lambda_i(s)-\frac{1}{k}\right)^2  \mathbf{1}_{\{\lambda_i < \frac{1}{k}\}}\right] \mathbf{1}_{\{\mathbb{E}\left[\left(\lambda_i(s)\right)^2\right] < \infty\}} ds.
    \end{align*}
    Notice that $\left(\lambda_i(s)-k\right)^2 \mathbf{1}_{\{\lambda_i > k\}} \leq \left(\lambda_i(s)\right)^2$ and $\left(\lambda_i(s)-\frac{1}{k}\right)^2  \mathbf{1}_{\{\lambda_i < \frac{1}{k}\}} \leq 1$. Moreover, we have $\mathbb{E}\left[\left(\lambda_i(s)\right)^2\right] < \infty$ for almost every $s \in [t,T]$     Then by the Dominated Convergence Theorem, we have $$
    \lim_{k \rightarrow \infty} \mathbb{E}\left[\left(\lambda_i(s)-k\right)^2  \mathbf{1}_{\{\lambda_i > k\}}\right] = \mathbb{E}\left[\lim_{k \rightarrow \infty}  \left(\lambda_i(s)-k\right)^2  \mathbf{1}_{\{\lambda_i > k\}}\right] = 0
    $$
    and 
    $$
    \lim_{k \rightarrow \infty} \mathbb{E}\left[\left(\lambda_i(s)-\frac{1}{k}\right)^2  \mathbf{1}_{\{\lambda_i < \frac{1}{k}\}}\right] = \mathbb{E}\left[\lim_{k \rightarrow \infty}  \left(\lambda_i(s)-\frac{1}{k}\right)^2  \mathbf{1}_{\{\lambda_i < \frac{1}{k}\}}\right] = 0.
    $$
    Thus, $\lim_{k \rightarrow \infty}\mathbb{E}\left[\int_t^T \left(\lambda_i(s) - \lambda_{k,i}(s)\right)^2 ds\right] = 0$. Summing over all components $i$, we obtain 
    $$
    \lim_{k \rightarrow \infty}\mathbb{E}\left[\int_t^T \left\|\lambda(s) - \lambda_k(s)\right\|^2 ds\right] = 0.
    $$

    Additionally, it follows from the Cauchy--Schwarz inequality that 
    $$\bigg(c\big(\lambda(s)\big) - c\big(\lambda_k(s)\big)\bigg)^2 = \bigg(\sum_{i=1}^d \left(c_i\big(\lambda_i(s)\big) - c_i\big(\lambda_{k,i}(s)\big)\right)\bigg)^2 \leq d \sum_{i=1}^d \left(c_i\big(\lambda_i(s)\big) - c_i\big(\lambda_{k,i}(s)\big)\right)^2.
    $$ Thus,
    $\lim_{k \rightarrow \infty}\mathbb{E}\left[\int_t^T \bigg(c\big(\lambda(s)\big) - c\big(\lambda_k(s)\big)\bigg)^2 ds\right] = \lim_{k \rightarrow \infty}\mathbb{E}\left[\int_t^T \left\|\nabla c\big(\lambda(s)\big) - \nabla c\big(\lambda_k(s)\big)\right\|^2 ds\right] = 0$ follow from the same argument.
\end{proof}

\begin{lemma} \label{lemma_cvg_xy}
    $\lim_{k \rightarrow \infty}\mathbb{E}\left[\left|\ell \left(x^{\lambda_k}(T)\right)-\ell \left(x^{\lambda}(T)\right)\right|\right] = 0$ and $\lim_{k \rightarrow \infty}\mathbb{E}\left[\left|y^{\lambda_k}(T)-y^{\lambda}(T)\right|\right] = 0$.
\end{lemma}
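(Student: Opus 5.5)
The plan is to compare the two state processes pathwise, using that they are driven by the same Brownian motions $\mathcal{B},\mathcal{W}$ from the same initial point $(x,y)$. By \eqref{auxiliary} we have
\begin{align}
x^{\lambda_k}(T)-x^{\lambda}(T) &= \int_t^T \big(\lambda_k(s)-\lambda(s)\big)\,ds,\\
y^{\lambda_k}(T)-y^{\lambda}(T) &= \int_t^T \Big(c\big(\lambda_k(s)\big)-c\big(\lambda(s)\big)\Big)ds + \int_t^T \Big[\nabla c\big(\lambda_k(s)\big)-\nabla c\big(\lambda(s)\big)\Big]^\top \sigma\, d\mathcal{B}(s).
\end{align}
The noise terms $\sigma\,d\mathcal{B}+\sqrt{\epsilon}\sigma\, d\mathcal{W}$ cancel exactly in the $x$-component, so only drift differences remain there. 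In the $y$-component one drift difference and one stochastic integral remain.

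For the $x$-part I would first use Assumption \ref{assumptionliquid}. It gives $\|\nabla\ell\|\le \sqrt{d}\,c_\ell$; this follows since $\ell$ is concave with $\nabla\ell\ge \frac1{c_\ell}\mathbf 1$ and linear growth, and if only the lower bound were available I would use the bound $\ell_{x_j}\le c_\ell$ already used in Step 2 of Proposition \ref{prop_unique_lambda_w^b}. Hence $\ell$ is globally Lipschitz and
\begin{equation}
\mathbb{E}\big|\ell(x^{\lambda_k}(T))-\ell(x^{\lambda}(T))\big| \le \sqrt{d}\,c_\ell\, \mathbb{E}\int_t^T\|\lambda_k(s)-\lambda(s)\|\,ds \le \sqrt{d}\,c_\ell\sqrt{T-t}\Big(\mathbb{E}\int_t^T\|\lambda_k-\lambda\|^2ds\Big)^{1/2},
\end{equation}
by Cauchy--Schwarz. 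By Lemma \ref{lemma_cvg_lambda} the right-hand side tends to $0$.

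For the $y$-part I would bound the drift term in the same way, by $\sqrt{T-t}\,\big(\mathbb{E}\int_t^T (c(\lambda_k)-c(\lambda))^2ds\big)^{1/2}$. For the stochastic integral I would use Jensen and the Itô isometry:
\begin{equation}
\mathbb{E}\Big|\int_t^T [\nabla c(\lambda_k)-\nabla c(\lambda)]^\top\sigma\,d\mathcal{B}\Big| \le \max_i|\sigma_i|\Big(\mathbb{E}\int_t^T\|\nabla c(\lambda_k(s))-\nabla c(\lambda(s))\|^2ds\Big)^{1/2}.
\end{equation}
Both bounds vanish as $k\to\infty$ by the remaining two limits in Lemma \ref{lemma_cvg_lambda}.

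The only point that needs care is whether the Itô isometry applies, i.e.\ whether the integrand is square integrable. This holds by \eqref{assumption_lambda} for $\lambda$. For $\lambda_k$ it holds because $\lambda_k$ is bounded, so $\nabla c(\lambda_k)$ is bounded. Hence the difference lies in $L^2(\Omega\times[t,T])$ and the stochastic integral is a genuine $L^2$ martingale. I would also note that the two integrals are taken with respect to the same $\mathcal{B}$. This is legitimate because the truncation $\lambda\mapsto\lambda_k$ is a measurable function of $\lambda$, so $\lambda_k$ is again $\mathbb F^{\mathcal B,\mathcal W}$-optional and the strong solutions live on the same probability space. Apart from this check, the whole argument is a direct reduction to Lemma \ref{lemma_cvg_lambda}.
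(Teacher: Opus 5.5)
Your proposal is correct and follows essentially the same route as the paper. Both arguments write out the differences explicitly, use that the noise cancels in the $x$-component, and bound the $y$-difference by a Cauchy--Schwarz estimate on the drift plus the It\^o isometry for the stochastic integral. Both then conclude from Lemma \ref{lemma_cvg_lambda} and the Lipschitz bound $\sqrt{d}\,c_\ell$ on $\ell$. The only cosmetic difference is that the paper first proves $L^2$ convergence of the terminal states and then passes to $L^1$, whereas you bound the $L^1$ norms directly.
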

\begin{proof}[Proof of Lemma \ref{lemma_cvg_xy}]
We first show that 
$$
\lim_{k \rightarrow \infty}\mathbb{E}\left[\left\|x^{\lambda_k}(T)-x^{\lambda}(T)\right\|^2\right] = \lim_{k \rightarrow \infty}\mathbb{E}\left[\left(y^{\lambda_k}(T)-y^{\lambda}(T)\right)^2\right] = 0.
$$
Notice that we have
\begin{align*}
    x^{\lambda_k}(T) &= x + \int_t^T\lambda_k(s)ds + \int_t^T \sigma d\mathcal{B}(s) + \int_t^T \sqrt{\epsilon} \sigma d\mathcal{W}(s),\\
    y^{\lambda_k}(T) &= y + \int_t^T c\left(\lambda_k(s)\right)ds + \int_t^T \nabla c\left(\lambda_k(s)\right)^\top \sigma d\mathcal{B}(s),\\
    x^{\lambda}(T) &= x + \int_t^T\lambda(s)ds + \int_t^T \sigma d\mathcal{B}(s) + \int_t^T \sqrt{\epsilon} \sigma d\mathcal{W}(s),\\
    y^{\lambda}(T) &= y + \int_t^T c\left(\lambda(s)\right)ds + \int_t^T \nabla c\left(\lambda(s)\right)^\top \sigma d\mathcal{B}(s).
\end{align*}
The Cauchy--Schwarz inequality yields
\begin{align*}
    \mathbb{E}\left[\left\|x^{\lambda_k}(T)-x^{\lambda}(T)\right\|^2\right] &= \mathbb{E}\left[\left\|\int_t^T \left(\lambda_k(s) - \lambda(s)\right) ds\right\|^2\right] \leq (T-t) \mathbb{E}\left[\int_t^T \left\|\lambda_k(s) - \lambda(s) \right\|^2 ds\right].
\end{align*}
Similarly,
\begin{align*}
    &\mathbb{E}\left[\left(y^{\lambda_k}(T)-y^{\lambda}(T)\right)^2\right]\\
    =& \mathbb{E}\left[\left( \int_t^T \left(c\left(\lambda_k(s)\right) - c\left(\lambda(s)\right)\right) ds + \int_t^T \left(\nabla c\left(\lambda_k(s)\right) - \nabla c\left(\lambda(s)\right)\right)^\top \sigma d\mathcal{B}(s)\right)^2\right]\\
    \leq& 2 \mathbb{E}\left[ \left(\int_t^T \left(c\left(\lambda_k(s)\right) - c\left(\lambda(s)\right)\right)ds\right)^2\right] + 2 \mathbb{E}\left[ \left(\int_t^T \left(\nabla c\left(\lambda_k(s)\right) - \nabla c\left(\lambda(s)\right)\right)^\top \sigma d\mathcal{B}(s)\right)^2\right]\\
    \leq& 2(T-t) \mathbb{E}\left[ \int_t^T \left(c\left(\lambda_k(s)\right) - c\left(\lambda(s)\right)\right)^2ds\right] + 2\mathbb{E}\left[ \int_t^T\left\|\sigma^\top \left(\nabla c\left(\lambda_k(s)\right) - \nabla c\left(\lambda(s)\right)\right)\right\|^2ds\right].
\end{align*}
Thus, by Lemma \ref{lemma_cvg_lambda}, we have $\lim_{k \rightarrow \infty}\mathbb{E}\left[\left\|x^{\lambda_k}(T)-x^{\lambda}(T)\right\|^2\right] = \lim_{k \rightarrow \infty}\mathbb{E}\left[\left(y^{\lambda_k}(T)-y^{\lambda}(T)\right)^2\right] = 0$. Moreover,
$\lim_{k \rightarrow \infty}\mathbb{E}\left[\left|\ell(x^{\lambda_k}(T))-\ell(x^{\lambda}(T))\right|\right] = 0$ follows from
$$
\mathbb{E}\left[\left|\ell(x^{\lambda_k}(T))-\ell(x^{\lambda}(T))\right|\right] \leq \sqrt{d} c_\ell \mathbb{E}\left[\left\|x^{\lambda_k}(T)-x^{\lambda}(T)\right\|\right] \leq \sqrt{d} c_\ell \sqrt{\mathbb{E}\left[\left\|x^{\lambda_k}(T)-x^{\lambda}(T)\right\|^2\right]},
$$
and $\lim_{k \rightarrow \infty}\mathbb{E}\left[\left|y^{\lambda_k}(T)-y^{\lambda}(T)\right|\right] = 0$ follows by the same argument.
\end{proof}

To prove (i), we modify the principal's utility $U_P$. To be more precise, we fix a constant $M >0$ and modify $U_P$ as follows:
\begin{equation} \label{new_UP}
    U_P^M(x) := \begin{cases}
        U_P(x), \quad x \in [-M,\infty),\\
        A(x), \quad x \in (-\infty,-M), \end{cases}
\end{equation}
where $A$ is an asymptotically linear function chosen so that $U_P^M \in C^{2+\alpha^0}(\mathbb{R})$, $U_P^M\geq U_P$ and Assumption \ref{assumptionUP} holds with constants independent of $M$. Therefore, $U_P^M$ still satisfies Assumption \ref{assumptionUP} and $(U_P^M)' \leq c^1_P$ for some constant $c^1_P > 0$. We also define the corresponding objective function and choose $\Lambda^M = \{\lambda^M(s)\}_{s=t}^T \in  \mathcal{A}(x, y, t)$ satisfying
\begin{align*}
    w^{\epsilon,M}(x, y, t)&=\sup _{\Lambda\in \mathcal{A}(x, y, t)} \mathbb{E}\left[U_P^M\big(\ell\left(x^{\lambda}(T)\right)-U_A^{-1}(y^{\lambda}(T))\big)\right]\\
    &\leq \mathbb{E}\left[U_P^M\big(\ell\left(x^{\lambda^{M}}(T)\right)-U_A^{-1}(y^{\lambda^{M}}(T))\big)\right] + \hat{\epsilon},\\
    w_k^{b,M}(x, y, t)&=\sup _{\Lambda\in \mathcal{A}_k^b(x, y, t)} \mathbb{E}\left[U_P^M\big(\ell\left(x^{\lambda}(T)\right)-U_A^{-1}(y^{\lambda}(T))\big)\right]\\
    &\geq \mathbb{E}\left[U_P^M\big(\ell\left(x^{\lambda_k^M}(T)\right)-U_A^{-1}(y^{\lambda_k^M}(T))\big)\right],
\end{align*}
where $\lambda_{k,i}^M = \lambda^{M}_i \mathbf{1}_{\{\frac{1}{k} \leq |\lambda^{M}_i|\leq k\}} + \frac{1}{k} \mathbf{1}_{\{|\lambda^{M}_i| < \frac{1}{k}\}} + k \mathbf{1}_{\{|\lambda^{M}_i|> k\}}$. Based on previous analysis, we have the following conclusions:
\begin{align}
    w_k^{b,M}(x,y,t) &\in C^{2+\alpha,\frac{2+\alpha}{2}}(\mathbb{R}^d \times \mathbb{R} \times [0,T]).\\
    \lim_{k \rightarrow \infty}\mathbb{E}\left[\int_t^T \left\|\lambda^{M}(s) - \lambda_k^M(s)\right\|^2 ds\right] &= \lim_{k \rightarrow \infty}\mathbb{E}\left[\int_t^T \bigg(c\big(\lambda^{M}(s)\big) - c\big(\lambda_k^M(s)\big)\bigg)^2 ds\right]\\
    &= \lim_{k \rightarrow \infty}\mathbb{E}\left[\int_t^T \left\|\nabla c\big(\lambda^{M}(s)\big) - \nabla c\big(\lambda_k^M(s)\big)\right\|^2 ds\right] = 0.\\
    \lim_{k \rightarrow \infty}\mathbb{E}\left[\left|\ell \left(x^{\lambda_k^M}(T)\right)-\ell \left(x^{\lambda^{M}}(T)\right)\right|\right] &= 0.\\
    \lim_{k \rightarrow \infty}\mathbb{E}\left[\left|y^{\lambda_k^M}(T)-y^{\lambda^{M}}(T)\right|\right] &= 0.\\
    \end{align}
We first state and prove the following two lemmas, which are crucial for proving (i).
\begin{lemma} \label{lemma_cvg_w^M}
    $\lim_{k \rightarrow \infty}w_k^{b,M}(x,y,t) = w^{\epsilon,M}(x,y,t), \quad \forall (x,y,t) \in \mathbb{R}^d \times \mathbb{R} \times [0,T]$.
\end{lemma}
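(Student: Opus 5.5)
The argument is a sandwich: a trivial upper bound $w_k^{b,M}\le w^{\epsilon,M}$ from inclusion of control sets, and a lower bound obtained by truncating an $\hat\epsilon$-optimal control. The key point is that $U_P^M$ is globally Lipschitz, with $(U_P^M)'\le c_P^1$, which lets us pass the $L^1$ convergence of the terminal states through the utility.

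\textbf{Upper bound.} Since $\mathcal{A}_k^b(x,y,t)\subset\mathcal{A}(x,y,t)$, we have $w_k^{b,M}\le w^{\epsilon,M}$ for every $k$. Moreover $\mathcal{A}^b_k\subset\mathcal{A}^b_{k+1}$, so $w_k^{b,M}$ is nondecreasing in $k$ and its limit exists, with
\[
\lim_{k\to\infty} w_k^{b,M}\le w^{\epsilon,M}.
\]

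\textbf{Lower bound.} Fix $\hat\epsilon>0$ and take the $\hat\epsilon$-optimal $\Lambda^M$ and its truncation $\Lambda_k^M\in\mathcal{A}^b_k(x,y,t)$ as defined before the lemma. Since $U_P^M$ is Lipschitz with constant $c_P^1$ and $U_A^{-1}$ is Lipschitz with constant $c$ by Assumption \ref{assumptionUA}, we get
\begin{align*}
\Bigl|\mathbb{E}\bigl[U_P^M(\ell(x^{\lambda_k^M}(T))-U_A^{-1}(y^{\lambda_k^M}(T)))\bigr]-\mathbb{E}\bigl[U_P^M(\ell(x^{\lambda^M}(T))-U_A^{-1}(y^{\lambda^M}(T)))\bigr]\Bigr|\\
\le c_P^1\Bigl(\mathbb{E}\bigl|\ell(x^{\lambda_k^M}(T))-\ell(x^{\lambda^M}(T))\bigr|+c\,\mathbb{E}\bigl|y^{\lambda_k^M}(T)-y^{\lambda^M}(T)\bigr|\Bigr).
\end{align*}
By the convergence statements listed just before the lemma (the analogues of Lemma \ref{lemma_cvg_xy} for $\lambda^M$), the right-hand side tends to $0$ as $k\to\infty$. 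Hence
\[
\liminf_{k\to\infty} w_k^{b,M}\ge \mathbb{E}\bigl[U_P^M(\ell(x^{\lambda^M}(T))-U_A^{-1}(y^{\lambda^M}(T)))\bigr]\ge w^{\epsilon,M}-\hat\epsilon.
\]
Letting $\hat\epsilon\to0$ gives the lemma.

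\textbf{Main obstacle.} The essential step is the Lipschitz bound on $U_P^M$. With the original $U_P$, whose derivative may grow exponentially by Remark \ref{rmk_growth_U_P'}, $L^1$ convergence of the terminal states would not suffice. This is exactly why the asymptotically linear modification \eqref{new_UP} is introduced.

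One also needs the truncated controls to stay admissible, i.e.\ that \eqref{auxiliary} has a strong solution and \eqref{assumption_lambda} holds. Both are immediate because the truncated controls are bounded and progressively measurable, being truncations of optional processes.
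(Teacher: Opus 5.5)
Your proposal is correct and follows essentially the same route as the paper. The upper bound comes from $\mathcal{A}_k^b(x,y,t)\subset\mathcal{A}(x,y,t)$. The lower bound comes from truncating an $\hat\epsilon$-optimal control $\Lambda^M$ into $\mathcal{A}^b_k(x,y,t)$, then using $(U_P^M)'\le c_P^1$, $(U_A^{-1})'\le c$ and the $L^1$ convergence of $\ell(x^{\lambda_k^M}(T))$ and $y^{\lambda_k^M}(T)$; your remarks on monotonicity in $k$ and admissibility of the truncated controls are correct but not needed.
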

\begin{proof} [Proof of Lemma \ref{lemma_cvg_w^M}]
On the one hand, since $\mathcal{A}_k^b(x, y, t) \subset \mathcal{A}(x, y, t)$, we have 
$$
w^{\epsilon,M}(x,y,t) \geq w_k^{b,M}(x,y,t).
$$
On the other hand, we have 
\begin{align*}
    & \quad w^{\epsilon,M}(x,y,t) - w_k^{b,M}(x,y,t)\\
    &= \sup _{\Lambda\in \mathcal{A}(x, y, t)} \mathbb{E}\left[U_P^M\big(\ell \left(x^{\lambda}(T)\right)-U_A^{-1}(y^{\lambda}(T))\big)\right] - \sup _{\Lambda\in \mathcal{A}_k^b(x, y, t)} \mathbb{E}\left[U_P^M\big(\ell \left(x^{\lambda}(T)\right)-U_A^{-1}(y^{\lambda}(T))\big)\right]\\
    &\leq \mathbb{E}\left[U_P^M\big(\ell \left(x^{\lambda^{M}}(T)\right)-U_A^{-1}(y^{\lambda^{M}}(T))\big)\right] - \mathbb{E}\left[U_P^M\big(\ell \left(x^{\lambda^M_k}(T)\right)-U_A^{-1}(y^{\lambda^M_k}(T))\big)\right] + \hat{\epsilon}\\
    & \leq c_P^1 \mathbb{E}\left[\left|\ell \left(x^{\lambda^{M}}(T)\right)- \ell \left(x^{\lambda^M_k}(T)\right) + U_A^{-1}(y^{\lambda^M_k}(T)) - U_A^{-1}(y^{\lambda^{M}}(T))\right|\right] + \hat{\epsilon}\\
    &\leq c_P^1 \mathbb{E}\left[\left|\ell \left(x^{\lambda^{M}}(T)\right)- \ell \left(x^{\lambda^M_k}(T)\right)\right|\right] + c_P^1 c \mathbb{E}\left[\left|y^{\lambda^{M}}(T)- y^{\lambda^M_k}(T)\right|\right] + \hat{\epsilon},
\end{align*}
where the last two inequalities follow from the boundedness of $(U^M_P)'$ and $(U_A^{-1})'$. Hence, the convergence is proved by letting $k \rightarrow \infty$ and $\hat{\epsilon} \rightarrow 0$.
\end{proof}
\begin{lemma} \label{lemma_w^bM_to_w^b}
    $\lim_{M \rightarrow \infty}w_{\tilde{c}_\lambda}^{b,M}(x,y,t) = w_{\tilde{c}_\lambda}^b(x,y,t)$.
\end{lemma}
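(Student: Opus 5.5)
We need to prove $\lim_{M\to\infty} w^{b,M}_{\tilde c_\lambda}(x,y,t) = w^b_{\tilde c_\lambda}(x,y,t)$ pointwise. Here the controls are restricted to $[1/\tilde c_\lambda,\tilde c_\lambda]^d$, so all admissible state processes have bounded drift and diffusion coefficients. The plan is to sandwich the difference between two quantities that can be made uniformly small over all admissible controls.

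\textbf{Lower bound.} Since $U_P^M\geq U_P$, for every $\Lambda\in\mathcal{A}^b_{\tilde c_\lambda}(x,y,t)$ the payoff with $U_P^M$ dominates the payoff with $U_P$. Taking suprema gives
\[
w^{b,M}_{\tilde c_\lambda}\geq w^b_{\tilde c_\lambda}.
\]

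\textbf{Upper bound.} Write $Z^\lambda:=\ell(x^\lambda(T))-U_A^{-1}(y^\lambda(T))$. Because $U_P^M=U_P$ on $[-M,\infty)$,
\[
\mathbb{E}\big[U_P^M(Z^\lambda)\big]-\mathbb{E}\big[U_P(Z^\lambda)\big]=\mathbb{E}\big[(U_P^M-U_P)(Z^\lambda)\mathbf{1}_{\{Z^\lambda<-M\}}\big].
\]
By $U_P\leq 0$ and Remark \ref{rmk_growth_U_P'}, we have $|U_P(z)|\leq C e^{c_P|z|}+C$. The construction of $A$, which is asymptotically linear and has derivative bounded by $c^1_P$, gives $|U_P^M(z)|\leq C(1+|z|)+|U_P(-M)|$ on $(-\infty,-M)$, and this is again dominated by $Ce^{c_P|z|}+C$. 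Hence
\[
|U_P^M-U_P|(Z^\lambda)\mathbf{1}_{\{Z^\lambda<-M\}}\leq C\,e^{c_P|Z^\lambda|}\mathbf{1}_{\{|Z^\lambda|>M\}}.
\]
Cauchy--Schwarz then bounds the right-hand side in expectation by
\[
C\,\mathbb{E}\big[e^{2c_P|Z^\lambda|}\big]^{1/2}\,\mathbb{P}(|Z^\lambda|>M)^{1/2}.
\]

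\textbf{Uniform moment bound.} The key is a bound on the exponential moment that is uniform over controls. Since $|\ell(x)|\leq c_\ell(\|x\|+1)$ and $|U_A^{-1}(y)|\leq c|y|+|U_A^{-1}(0)|$, it suffices to control $\mathbb{E}[e^{\gamma\|x^\lambda(T)\|}]$ and $\mathbb{E}[e^{\gamma|y^\lambda(T)|}]$ for $\gamma=4c_Pc_\ell+4c_Pc$. This is exactly the computation in Step 1 of the proof of Lemma \ref{lem_viscosity_partial_derivatives}, with $c_\lambda$ replaced by $\tilde c_\lambda$. The drift is bounded by $\sqrt d\,\tilde c_\lambda$, and the exponential-martingale argument with bounded integrand $\nabla c(\lambda)$, where $|c_i'(\lambda_i)|\leq c_i'(\tilde c_\lambda)$, gives bounds of the form $\kappa e^{\gamma(\|x\|+|y|)}$. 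These are uniform over all $\Lambda\in\mathcal{A}^b_{\tilde c_\lambda}$ and over $\epsilon\in(0,1)$. Markov's inequality then gives $\mathbb{P}(|Z^\lambda|>M)\leq \mathbb{E}[|Z^\lambda|]/M\leq K/M$, again uniformly in $\Lambda$.

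\textbf{Conclusion.} Combining the previous steps,
\[
\sup_{\Lambda}\Big(\mathbb{E}\big[U_P^M(Z^\lambda)\big]-\mathbb{E}\big[U_P(Z^\lambda)\big]\Big)\leq \frac{C(x,y)}{\sqrt M}.
\]
Hence $w^{b,M}_{\tilde c_\lambda}\leq w^b_{\tilde c_\lambda}+C(x,y)M^{-1/2}$. Together with the lower bound, letting $M\to\infty$ proves the lemma.

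The main obstacle is the growth bound for $U_P^M$ on $(-\infty,-M)$. It must be bounded by an exponential with constants independent of $M$, and this is where the requirement that $A$ satisfies Assumption \ref{assumptionUP} with $M$-independent constants is used. Specifically, integrating $(U_P^M)'\leq U_P'(0)e^{c_P|z|}$ from $0$ gives $|U_P^M(z)|\leq |U_P(0)|+\frac{U_P'(0)}{c_P}e^{c_P|z|}$ for all $M$, which closes the argument.
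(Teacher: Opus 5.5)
Your proof is correct, but it controls the tail term by a different route than the paper.

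\textbf{The paper's route.} The paper fixes an $\hat\epsilon$-optimal control $\Lambda^{b,M}$ and reduces the difference to $C_P\,\mathbb{E}\big[e^{-c_P z(T)}\mathbf{1}_{\{z(T)<-M\}}\big]$. It then writes the It\^o dynamics of $z(s)=\ell(x(s))-U_A^{-1}(y(s))$ and observes that drift and diffusion are bounded because $\lambda\in[1/\tilde c_\lambda,\tilde c_\lambda]^d$. From this it uses a sub-Gaussian tail bound $\mathbb{P}(z(T)<-V)\le \delta^{-1}e^{-\delta V^2}$ and integrates it by the layer-cake formula, obtaining an explicit $R(M)\to 0$.

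\textbf{Your route.} You bound the difference uniformly over all of $\mathcal{A}^b_{\tilde c_\lambda}$, so no near-optimal control is needed. In place of the sub-Gaussian tail you combine three ingredients:
\begin{itemize}
\item Cauchy--Schwarz;
\item the exponential-moment estimates from Step 1 of the proof of Lemma \ref{lem_viscosity_partial_derivatives}, with $c_\lambda$ replaced by $\tilde c_\lambda$;
\item Markov's inequality.
\end{itemize}

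\textbf{What each buys.} Your argument is more self-contained. It needs neither It\^o's formula for $U_A^{-1}(y)$ nor an external concentration estimate. It also makes uniformity in $\epsilon\in(0,1)$ immediate, which the paper needs again in Lemma \ref{lemma_cvg_wM_to_w}. The cost is a slower rate, $O(M^{-1/2})$ instead of the Gaussian-type decay of $R(M)$, which is irrelevant for the limit.

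\textbf{A small point.} Your first growth estimate for $U_P^M$ on $(-\infty,-M)$, via the slope bound $c_P^1$, is not uniform in $M$. Concavity forces $(U_P^M)'\ge U_P'(-M)$ there, so $c_P^1\ge U_P'(-M)$, which can grow like $e^{c_P M}$. Your closing paragraph repairs this correctly by applying Remark \ref{rmk_growth_U_P'} to $U_P^M$. A simpler fix is also available. Since Assumption \ref{assumptionUP} holds for $U_P^M$, you have $U_P\le U_P^M\le 0$. Hence $0\le U_P^M-U_P\le |U_P|$ directly, with no growth bound on $U_P^M$ needed.
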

\begin{proof}[Proof of Lemma \ref{lemma_w^bM_to_w^b}]
    Choose $\Lambda^{b,M} = \{\lambda^{b,M}(s)\}_{s=t}^T \in \mathcal{A}^b_{\tilde{c}_\lambda}(x, y, t)$ satisfying
    \begin{align*}
        w_{\tilde{c}_\lambda}^{b,M}(x, y, t)&=\sup _{\Lambda\in \mathcal{A}^b_{\tilde{c}_\lambda}(x, y, t)} \mathbb{E}\left[U_P^M\big(\ell \left(x^{\lambda}(T)\right)-U_A^{-1}(y^{\lambda}(T))\big)\right]\\
        &\leq \mathbb{E}\left[U_P^M\big(\ell \left(x^{\lambda^{b,M}}(T)\right)-U_A^{-1}(y^{\lambda^{b,M}}(T))\big)\right] + \hat{\epsilon}.
    \end{align*}
    Then we have
    \begin{align*}
        w_{\tilde{c}_\lambda}^{b}(x, y, t)&=\sup _{\Lambda\in \mathcal{A}_{\tilde{c}_\lambda}^b(x, y, t)} \mathbb{E}\left[U_P\big(\ell \left(x^{\lambda}(T)\right)-U_A^{-1}(y^{\lambda}(T))\big)\right]\\
        &\geq \mathbb{E}\left[U_P\big(\ell \left(x^{\lambda^{b,M}}(T)\right)-U_A^{-1}(y^{\lambda^{b,M}}(T))\big)\right].
    \end{align*}
    Let $z(T) := \ell \left(x^{\lambda^{b,M}}(T)\right)-U_A^{-1}(y^{\lambda^{b,M}}(T))$. Then
    \begin{align}
        0 &\leq w_{{\tilde{c}_\lambda}}^{b,M}(x,y,t) - w_{{\tilde{c}_\lambda}}^b(x,y,t)\\
        &= \sup _{\Lambda\in \mathcal{A}_{\tilde{c}_\lambda}^b(x, y, t)} \mathbb{E}\left[U_P^M\big(\ell \left(x^{\lambda}(T)\right)-U_A^{-1}(y^{\lambda}(T))\big)\right] - \sup _{\Lambda\in \mathcal{A}_{\tilde{c}_\lambda}^b(x, y, t)} \mathbb{E}\left[U_P\big(\ell \left(x^{\lambda}(T)\right)-U_A^{-1}(y^{\lambda}(T))\big)\right]\\
        &\leq \mathbb{E}\left[U_P^M\big(\ell \left(x^{\lambda^{b,M}}(T)\right)-U_A^{-1}(y^{\lambda^{b,M}}(T))\big)\right] - \mathbb{E}\left[U_P\big(\ell \left(x^{\lambda^{b,M}}(T)\right)-U_A^{-1}(y^{\lambda^{b,M}}(T))\big)\right] + \hat{\epsilon}\\
        &= \mathbb{E}\left[\big(U_P^M(z(T)) - U_P(z(T))\big)\mathbf{1}_{\{z(T)<-M\}}\right] + \hat{\epsilon}\\
        &\leq C_P\mathbb{E}\left[e^{-c_P z(T)}\mathbf{1}_{\{z(T)<-M\}}\right] + \hat{\epsilon}, \label{tail_prob}
    \end{align}
    for some constant $C_P > 0$.
    Notice that by Itô's formula, we have
    \begin{align}
    &d \left(U_A^{-1}(y^{\lambda^{b,M}}(s))\right)\\
    =& \bigg((U_A^{-1})'c(\lambda^{b,M}(s)) + \frac{1}{2} (U_A^{-1})''\left\|\sigma^\top \nabla c(\lambda^{b,M}(s))\right\|^2\bigg) d s+\left((U_A^{-1})' \nabla c(\lambda^{b,M}(s))^\top \sigma \right) d \mathcal{B}(s).
    \end{align}
    Thus, it follows that
    \begin{align}
    dz(s) &= \bigg(\nabla \ell^\top \lambda^{b,M}(s) + \frac{1+\epsilon}{2} \Tr[\sigma \sigma^\top \nabla^2 \ell] - (U_A^{-1})'c(\lambda^{b,M}(s)) - \frac{1}{2} (U_A^{-1})''\big\|\sigma^\top \nabla c(\lambda^{b,M}(s))\big\|^2\bigg) d s\\
    &+\left(\nabla \ell^\top \sigma - (U_A^{-1})' \nabla c(\lambda^{b,M}(s))^\top \sigma\right) d \mathcal{B}(s) + \sqrt{\epsilon} \nabla \ell^\top \sigma d\mathcal{W}(s),
    \end{align}
    where all the coefficients are uniformly bounded, independent of $M$. Therefore, we have
    \begin{align} \label{dym_z}
        \mathbb{P}\left(z(T) < -V\right) &\leq \frac{1}{\delta} e^{-\delta V^2}, \forall V > 0,
    \end{align}
    where $\delta > 0$ is independent of $V$. 
    Hence, setting $u = \frac{\ln a}{c_P}$, we obtain
    \begin{align}
        \mathbb{E}\left[e^{-c_P z(T)}\mathbf{1}_{z(T)<-M}\right] &= \int_0^\infty \mathbb{P}(e^{-c_P z(T)}\mathbf{1}_{\{z(T)<-M\}} > a)da\\
        &= \int_0^{e^{c_P M}} \mathbb{P}(z(T) < -M) \, da + \int_{e^{c_P M}}^\infty \mathbb{P}\left(z(T) < -\frac{\ln a}{c_P}\right) da\\
        &\leq \frac{1}{\delta} e^{-\delta M^2 + c_P M} + c_P\int_{M}^\infty \mathbb{P}\left(z(T) < -u\right)e^{c_Pu} du\\
        &\leq \frac{1}{\delta} e^{-\delta M^2 + c_P M} + \frac{c_P}{\delta}\int_{M}^\infty e^{-\delta u^2 + c_Pu}du\\
        &= \frac{1}{\delta} e^{-\delta M^2 + c_P M} + \frac{c_P}{\delta} e^{\frac{c_P^2}{4\delta}}\int_{M}^\infty e^{- \delta(u-\frac{c_P}{2\delta})^2}du\\
        &= \frac{1}{\delta} e^{-\delta M^2 + c_P M} + \frac{c_P}{\delta} e^{\frac{c_P^2}{4\delta}} \sqrt{\frac{\pi}{\delta}}\int_{M}^\infty \sqrt{\frac{\delta}{\pi}}e^{- \delta(u-\frac{c_P}{2\delta})^2}du\\
        &=\frac{1}{\delta} e^{-\delta M^2 + c_P M} + \frac{c_P}{\delta} e^{\frac{c_P^2}{4\delta}} \sqrt{\frac{\pi}{\delta}}\left[1-\Phi\left(\left(M-\frac{c_P}{2\delta}\right)\sqrt{2\delta}\right)\right]\\
        &=:R(M).
    \end{align}
    Since both terms in $R(M)$ converge to $0$ as $M\rightarrow\infty$, letting $M\rightarrow\infty$ and then $\hat{\epsilon}\rightarrow 0$ in \eqref{tail_prob} completes the proof. 
    \end{proof}

On the one hand, it follows from Proposition \ref{prop_unique_lambda_w^b} and Lemma \ref{lemma_cvg_w^M} that
\begin{align}
    \lim_{k \rightarrow \infty}w_k^b(x,y,t) &= w_{\tilde{c}_\lambda}^b(x,y,t),\\
    w^{\epsilon,M}(x,y,t) = \lim_{k \rightarrow \infty}w_k^{b,M}(x,y,t) &= w_{\tilde{c}_\lambda}^{b,M}(x,y,t), \quad \forall M \geq 0.
\end{align}
On the other hand, Lemma \ref{lemma_w^bM_to_w^b} guarantees that 
$$
\lim_{M \rightarrow \infty}w_{\tilde{c}_\lambda}^{b,M}(x,y,t) = w_{\tilde{c}_\lambda}^b(x,y,t).
$$ 
Additionally, since $U_P^M\geq U_P$ by construction, we have
$w^{\epsilon,M}\geq w^\epsilon$ for every $M$. Together with Lemma \ref{lemma_w^bM_to_w^b}, the following limit exists and satisfies
\begin{equation} \label{w^epsilon,M_geq_w^epsilon}
    \lim_{M\to\infty} w^{\epsilon,M} \geq w^\epsilon.
\end{equation}
Therefore, 
$$
 \lim_{k \rightarrow \infty}w_k^b(x,y,t) \geq w^\epsilon
$$
and the proof of (i) is completed.
\begin{figure}[H]
\centering
\begin{tikzpicture}[scale=0.7, transform shape,
    block/.style={rectangle, draw=black, thick, fill=white,
                  text width=2.5cm, minimum height=1cm, align=center,
                  inner sep=8pt},
    arrow/.style={thick, -Stealth[scale=1.2]},
    doublearrow/.style={thick, <->, -Stealth[scale=1.2]},
    label/.style={midway, fill=white, inner sep=2pt, font=\small}
]
\node[block, text width=2cm] (A) at (0,0) {$\lim_{k \rightarrow \infty} w^b_k$};
\node[block, text width=1cm, right=2.5cm of A] (B) {$w^b_{\tilde{c}_\lambda}$};
\node[block, right=2.5cm of B] (C) {$\lim_{M \rightarrow \infty} w^{b,M}_{\tilde{c}_\lambda}$};
\node[block, right=2.5cm of C] (D) {$\lim_{M \rightarrow \infty} w^{\epsilon,M}$};
\node[block, text width=1cm, right=3cm of D] (E) {$w^\epsilon$};
\draw[thick, <->] (A) -- node[label, above] {Proposition \ref{prop_unique_lambda_w^b}} (B);
\draw[thick, <->] (B) -- node[label, above] {Lemma \ref{lemma_w^bM_to_w^b}} (C);
\draw[thick, <->] (C) -- node[label, above] {Lemma \ref{lemma_cvg_w^M}} (D);
\draw[thick, ->] (D) -- node[label, above] {$\geq$ (see \eqref{w^epsilon,M_geq_w^epsilon})} (E);
\end{tikzpicture}
\caption{Logic flow}
\label{logic_1}
\end{figure}

\textbf{Proof of (ii):} 
$\forall k \geq \tilde{c}_\lambda$, $w^b_k$ is the classical solution of the PDE \eqref{pde_w_bdd_lamb} with $k = \tilde{c}_\lambda$ since the F.O.C. \eqref{FOC_w^b} yields the optimal effort when $k \geq \tilde{c}_\lambda$. By the comparison principle for PDE \eqref{pde_w_bdd_lamb} with $k = \tilde{c}_\lambda$, we have $w^b_k = w^b_{\tilde{c}_\lambda}, \ \forall k \geq \tilde{c}_\lambda$. Moreover, since $\lim_{k \rightarrow \infty} w_k^b = w^\epsilon$ from (i), it follows that $w^\epsilon = w^b_{\tilde{c}_\lambda}$. Hence, $w^\epsilon \in C^{2+\alpha,\frac{2+\alpha}{2}}\bigg(\mathbb{R}^d \times \mathbb{R} \times [0,T]\bigg)$, which, together with Proposition \ref{prop_unique_lambda_w^b}, proves (ii).

\textbf{Proof of (iii):} This is a direct consequence of (ii) and Proposition \ref{prop_unique_lambda_w^b}(2).
\end{proof}

\section{Proof of Theorem \ref{thm_regularity_w^0}}
\begin{proof}
    Fix $ (x_0,y_0) \in \mathbb{R}^d \times \mathbb{R}$ and $ R > 0$, and consider the cylinder $Q = B_R(x_0,y_0) \times (0,T)$ where $B_R(x_0,y_0) := \{(x,y)|\|x-x_0\|^2+(y-y_0)^2 < R^2\}$.
    We proceed in two steps. First, we show that, up to a subsequence, $w^\epsilon \xrightarrow{w} w^0$ in $W^{2,1}_p(\overline{Q})$ as $\epsilon \rightarrow 0$. Moreover, $\|w^0\|_{W^{2,1}_p(\overline{Q})} \leq \tilde{C}_Q$ for some $\tilde{C}_Q > 0$ that is independent of $\epsilon$. Second, we demonstrate that $w^0 \in W^{2,1}_{\infty}(\overline{Q})$.

    \textbf{Step 1:} It follows directly from Theorem \ref{thm_cvg_w_i_w}(iii) that $\forall 1 < p < \infty$,
    $\|w^\epsilon\|_{W^{2,1}_p(\overline{Q})} \leq \tilde{C}_Q$ for some $\tilde{C}_Q > 0$ that is independent of $\epsilon$. Define the bounded, closed and convex set $B_C := \{w \in W^{2,1}_p(\overline{Q}) \mid \|w\|_{W^{2,1}_p(\overline{Q})} \leq \tilde{C}_Q\}$. By the reflexivity of $W_p^{2,1}(\overline{Q})$, the set $B_C$ is weakly sequentially compact. Hence, up to a subsequence, $w^\epsilon \xrightarrow{w} v \in  B_C$ in $W^{2,1}_p(\overline{Q})$, as $\epsilon \rightarrow 0$.

                                It remains to show that $\lim_{\epsilon \rightarrow 0}w^\epsilon(x,y,t) = w^0(x,y,t)$ for every $(x,y,t) \in \overline{Q}$. 
            
    To this end, on the one hand, Theorem \ref{thm_cvg_w_i_w} shows that $w^\epsilon(x,y,t) = w^b_{\tilde{c}_\lambda}(x,y,t)$ is a classical solution of the PDE \eqref{pde_w_bdd_lamb}, and hence
    \begin{align}
        w^\epsilon_t + \sup_{\lambda \in [\frac{1}{\tilde{c}_\lambda},\tilde{c}_\lambda]^d}
        \Bigg\{
        \frac{1}{2} \Tr[\sigma \sigma^\top w^\epsilon_{xx}] + [\sigma \sigma^\top \nabla c(\lambda)]^\top w^\epsilon_{xy} + \frac{1}{2} \|\sigma^\top \nabla c(\lambda)\|^2 w^\epsilon_{yy} +& \lambda^\top w^\epsilon_x + c(\lambda) w^\epsilon_y \Bigg\}\\
        &= -\frac{\epsilon}{2} \Tr[\sigma \sigma^\top w^\epsilon_{xx}] \geq 0,
    \end{align}
    since $w^\epsilon(x,y,t)$ is concave on $\mathbb{R}^d \times \mathbb{R}$ for any $t \in [0,T]$. Thus, $w^\epsilon(x,y,t)$ is a viscosity subsolution of the above PDE. Moreover, since $w^0(x,y,t)$ is the viscosity solution of PDE \eqref{pde_w_noepsilon}, it follows that $w^0(x,y,t)$ is a viscosity supersolution of the above PDE, i.e., 
    \begin{align}
         w^0_t + \sup_{\lambda \in [\frac{1}{\tilde{c}_\lambda},\tilde{c}_\lambda]^d}
        \Bigg\{
        \frac{1}{2} \Tr[\sigma \sigma^\top w^0_{xx}] + [\sigma \sigma^\top \nabla c(\lambda)]^\top w^0_{xy} + \frac{1}{2} \|\sigma^\top \nabla c(\lambda)\|^2 w^0_{yy} + \lambda^\top w^0_x + c(\lambda) w^0_y \Bigg\} \leq 0.
    \end{align}
    It follows from the comparison principle for the counterpart of PDE \eqref{pde_w_bdd_lamb} with $\epsilon=0$ that 
    \begin{equation} \label{w^epsilon_leq_w^0}
        w^\epsilon(x,y,t) \leq w^0(x,y,t).
    \end{equation}
    
    On the other hand, let $w^{0,M}(x,y,t)$ denote the counterpart of $w^0(x,y,t)$ under $U_P^M$. Take $\lambda^{0,M} \in \mathcal{A}^0(x, y, t) \subset \mathcal{A}(x, y, t)$ satisfying 
        \begin{align}
    w^{0,M}(x,y,t) &= \sup _{\Lambda\in \mathcal{A}^0(x, y, t)} \mathbb{E}\left[U_P^M\big(\ell\left(\hat{x}^{\lambda}(T)\right)-U_A^{-1}(y^{\lambda}(T))\big)\right]\\
    &\leq \mathbb{E}\left[U_P^M\big(\ell\left(\hat{x}^{\lambda^{0,M}}(T)\right)-U_A^{-1}(y^{\lambda^{0,M}}(T))\big)\right] + \hat{\epsilon}.
    \end{align}
    Using the same argument as in the proof of \eqref{w^epsilon_leq_w^0}, we have
    \begin{equation} \label{w^epsilon,M_leq_w^0,M}
        w^{\epsilon,M}(x,y,t) \leq w^{0,M}(x,y,t).
    \end{equation}
    Additionally, we have the following two lemmas.
    \begin{lemma} \label{lemma_cvg_w^M0}
         $\lim_{\epsilon \rightarrow 0} w^{\epsilon,M}(x,y,t) = w^{0,M}(x,y,t).$
    \end{lemma}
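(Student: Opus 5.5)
We prove Lemma \ref{lemma_cvg_w^M0} by establishing the two inequalities separately, following the scheme already used for Theorem \ref{thm_cvg_w_i_w}(i). The upper bound is already available: \eqref{w^epsilon,M_leq_w^0,M} gives $w^{\epsilon,M}\leq w^{0,M}$ for every $\epsilon$, so $\limsup_{\epsilon\to0}w^{\epsilon,M}\leq w^{0,M}$. What remains is the lower bound $\liminf_{\epsilon\to0}w^{\epsilon,M}\geq w^{0,M}$, which we prove by a direct stochastic control argument in which a near-optimal control for the noiseless problem is reused in the noisy one.

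\textbf{Plan for the lower bound.} Fix $\hat\epsilon>0$ and $(x,y,t)$, and take the $\hat\epsilon$-optimal control $\lambda^{0,M}\in\mathcal{A}^0(x,y,t)$ chosen above. Since $\mathcal{A}^0(x,y,t)\subseteq\mathcal{A}(x,y,t)$, this control is admissible for the $\epsilon$-problem. The same control applied to \eqref{auxiliary} produces $x^{\epsilon}(T)=\hat{x}^{\lambda^{0,M}}(T)+\sqrt{\epsilon}\,\sigma(\mathcal{W}(T)-\mathcal{W}(t))$. The $y$-component is unchanged, because $y^{\lambda}$ is driven only by $\lambda$ and $\mathcal{B}$ and $\lambda^{0,M}$ is a fixed adapted process. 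Hence
\[
w^{0,M}(x,y,t)-w^{\epsilon,M}(x,y,t)\leq \mathbb{E}\Big[U_P^M\big(\ell(\hat{x}(T))-U_A^{-1}(y(T))\big)-U_P^M\big(\ell(x^\epsilon(T))-U_A^{-1}(y(T))\big)\Big]+\hat\epsilon .
\]
We then use $(U_P^M)'\leq c_P^1$ and $|\nabla\ell|\leq \sqrt d\,c_\ell$ (Assumption \ref{assumptionliquid}) to bound the expectation by $c_P^1\sqrt d\,c_\ell\sqrt{\epsilon}\,\mathbb{E}\|\sigma(\mathcal{W}(T)-\mathcal{W}(t))\|\leq C\sqrt{\epsilon T}$. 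Letting $\epsilon\to0$ and then $\hat\epsilon\to0$ gives the lower bound, and the limit exists and equals $w^{0,M}$. This Lipschitz estimate is exactly why we work with the truncated utility $U_P^M$ here rather than with $U_P$ directly.

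\textbf{Main obstacle: admissibility.} The delicate step is checking that $\lambda^{0,M}$ really belongs to $\mathcal{A}(x,y,t)$, i.e. that \eqref{auxiliary} has a strong solution with this control. The control is $\mathbb{F}^{\hat x}$-optional, and $\mathbb{F}^{\hat x}$ is generated by the noiseless output, not by $x^\epsilon$. We interpret it as a fixed process adapted to $\mathbb{F}^{\mathcal B,\mathcal W}$, using $\mathbb F^{\hat{x}}\subseteq\mathbb F^{\mathcal B,\mathcal W}$ as noted in Section \ref{sec_restrict}. With that reading, the state equations are explicit integrals and strong solvability is immediate; the integrability conditions \eqref{assumption_lambda} are properties of $\lambda$ alone and therefore carry over unchanged. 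Once this interpretation is accepted, the remaining estimate is routine.
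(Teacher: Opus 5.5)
Your proposal is correct and follows the paper's proof essentially line by line. The upper bound comes from the previously established comparison inequality $w^{\epsilon,M}\le w^{0,M}$. The lower bound comes from reusing the $\hat\epsilon$-optimal noiseless control $\lambda^{0,M}\in\mathcal{A}^0(x,y,t)\subseteq\mathcal{A}(x,y,t)$ in the noisy problem, so only the $x$-component changes by $\sqrt{\epsilon}\,\sigma(\mathcal{W}(T)-\mathcal{W}(t))$, and the Lipschitz bounds on $U_P^M$ and $\ell$ give an $O(\sqrt{\epsilon})$ gap. The only cosmetic difference is that the paper estimates $\mathbb{E}|\ell(\hat x^{\lambda^{0,M}}(T))-\ell(x^{\lambda^{0,M}}(T))|$ coordinatewise via the folded-normal mean rather than via $\|\nabla\ell\|\le\sqrt{d}\,c_\ell$.
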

    \begin{proof}[Proof of Lemma \ref{lemma_cvg_w^M0}]
        \begin{align} \label{cvg_hat_x_to_x}
        \mathbb{E}\left[\left|\ell\left(\hat{x}^{\lambda^{0,M}}(T)\right) - \ell\left(x^{\lambda^{0,M}}(T)\right)\right|\right] 
        &\leq c_\ell \sum_{j=1}^d \mathbb{E}\left[\left|\hat{x}^{\lambda^{0,M}}_j(T) - x^{\lambda^{0,M}}_j(T)\right|\right]\\
        &= \sqrt{\epsilon} c_\ell \sum_{j=1}^d \mathbb{E}\left[\left|\sigma_{j} \left(\mathcal{W}_j(T)-\mathcal{W}_j(t)\right)\right|\right]\\
        &= \sqrt{\epsilon} c_\ell \sum_{j=1}^d |\sigma_j| \sqrt{(T-t)} \sqrt{\frac{2}{\pi}},
        \end{align}
        where the last equality follows from the expectation of the folded normal distribution since 
        $$
        \sigma_j\left(\mathcal{W}_j(T)-\mathcal{W}_j(t)\right) \sim \mathcal{N}\left(0, \sigma_j^2(T-t)\right).
        $$
        Therefore, it follows immediately that $\lim_{\epsilon \rightarrow 0}\mathbb{E}\left[\left|\ell\left(\hat{x}^{\lambda^{0,M}}(T)\right) - \ell\left(x^{\lambda^{0,M}}(T)\right)\right|\right] = 0$.
        It follows from \eqref{w^epsilon,M_leq_w^0,M} that
        \begin{align}
        0 &\leq w^{0,M}(x,y,t) - w^{\epsilon,M}(x,y,t)\\
        & = \sup _{\Lambda\in \mathcal{A}^0(x, y, t)} \mathbb{E}\left[U_P^M\big(\ell\left(\hat{x}^{\lambda}(T)\right)-U_A^{-1}(y^{\lambda}(T))\big)\right] - \sup _{\Lambda\in \mathcal{A}(x, y, t)} \mathbb{E}\left[U_P^M\big(\ell\left(x^{\lambda}(T)\right)-U_A^{-1}(y^{\lambda}(T))\big)\right]\\
        &\leq \mathbb{E}\left[U_P^M\big(\ell\left(\hat{x}^{\lambda^{0,M}}(T)\right)-U_A^{-1}(y^{\lambda^{0,M}}(T))\big)\right] - \mathbb{E}\left[U_P^M\big(\ell\left(x^{\lambda^{0,M}}(T)\right)-U_A^{-1}(y^{\lambda^{0,M}}(T))\big)\right] + \hat{\epsilon}\\
        &\leq c_P^1 \mathbb{E}\left[\left|\ell\left(\hat{x}^{\lambda^{0,M}}(T)\right) - \ell\left(x^{\lambda^{0,M}}(T)\right)\right|\right] + \hat{\epsilon}.
    \end{align}
    Thus, $0 \leq \liminf_{\epsilon \rightarrow 0} \big(w^{0,M}(x,y,t) - w^{\epsilon,M}(x,y,t)\big) \leq \limsup_{\epsilon \rightarrow 0} \big(w^{0,M}(x,y,t) - w^{\epsilon,M}(x,y,t)\big) \leq \hat{\epsilon}$ by \eqref{cvg_hat_x_to_x}. Since $\hat{\epsilon} > 0$ is chosen arbitrarily, letting $\hat{\epsilon} \rightarrow 0$ implies $\lim_{\epsilon \rightarrow 0} w^{\epsilon,M} = w^{0,M}$.
    \end{proof}
\begin{lemma}\label{lemma_cvg_wM_to_w}
    $$
    \lim_{M\rightarrow \infty}w^{\epsilon,M}(x,y,t) = w^\epsilon(x,y,t), \quad \text{uniformly in} \,\, \epsilon.
    $$
\end{lemma}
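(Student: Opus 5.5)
Since $U_P^M\geq U_P$, we always have $w^{\epsilon,M}\geq w^\epsilon$. The work is in bounding $w^{\epsilon,M}-w^\epsilon$ from above by a quantity $R(M)$ that does not depend on $\epsilon$ and tends to $0$. The plan is to reuse the tail estimate from the proof of Lemma \ref{lemma_w^bM_to_w^b}. This requires working with controls taking values in the fixed compact set $[\frac{1}{\tilde{c}_\lambda},\tilde{c}_\lambda]^d$, uniformly in $\epsilon$.

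\textbf{Step 1 (reduction to bounded controls).} By Theorem \ref{thm_cvg_w_i_w}(i)--(ii), $w^\epsilon=w^b_{\tilde{c}_\lambda}$ for every $\epsilon\in(0,1)$. The same identity holds with $U_P$ replaced by $U_P^M$: in the proof of (i) we already saw $w^{\epsilon,M}=\lim_k w^{b,M}_k=w^{b,M}_{\tilde{c}_\lambda}$. This uses that Proposition \ref{prop_unique_lambda_w^b} applies to $U_P^M$. That proposition needs only Assumption \ref{assumptionUP}, and $U_P^M$ satisfies it with constants independent of $M$, so the bound $\tilde{c}_\lambda$ is also independent of $M$ and of $\epsilon$. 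I will state this independence explicitly, since it is the point where uniformity in $\epsilon$ enters. Consequently,
\[
0\le w^{\epsilon,M}-w^\epsilon=w^{b,M}_{\tilde{c}_\lambda}-w^{b}_{\tilde{c}_\lambda},
\]
where both suprema are taken over $\mathcal{A}^b_{\tilde{c}_\lambda}(x,y,t)$ for the $\epsilon$-dynamics \eqref{auxiliary}.

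\textbf{Step 2 (tail bound uniform in $\epsilon$).} Fix a near-optimal $\Lambda^{b,M}$ for $w^{b,M}_{\tilde{c}_\lambda}$ and set $z(T)=\ell(x^{\lambda^{b,M}}(T))-U_A^{-1}(y^{\lambda^{b,M}}(T))$. As in \eqref{tail_prob}, the difference is at most $C_P\mathbb{E}[e^{-c_Pz(T)}\mathbf{1}_{\{z(T)<-M\}}]+\hat\epsilon$. Apply Itô's formula to $z$ as in the proof of Lemma \ref{lemma_w^bM_to_w^b}. The drift is then bounded by a constant depending only on $\tilde{c}_\lambda$, $c$, $c_\ell$, $\sigma$ and $T$. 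The diffusion coefficients with respect to $\mathcal{B}$ and $\mathcal{W}$ are bounded by constants of the same type, because $\sqrt{\epsilon}\le 1$. Write $z(T)=z(t)+\int b\,ds+\text{martingale}$. The martingale has quadratic variation at most $\Sigma^2T$ with $\Sigma$ independent of $\epsilon$ and $M$, so the exponential martingale (Bernstein) inequality gives $\mathbb{P}(z(T)<-V)\le\frac{1}{\delta}e^{-\delta V^2}$ with $\delta$ depending only on these constants. Here $z(t)=\ell(x)-U_A^{-1}(y)$ is fixed, or locally bounded when working on $\overline{Q}$.

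\textbf{Step 3 (conclusion).} Repeat the layer-cake computation verbatim to obtain the bound $R(M)$. Since $R(M)$ depends only on $\delta$ and $c_P$, which do not depend on $\epsilon$, we get $\sup_{\epsilon\in(0,1)}|w^{\epsilon,M}-w^\epsilon|\le C_PR(M)+\hat\epsilon$. Letting $\hat\epsilon\to0$ and then $M\to\infty$ gives the claim, uniformly also for $(x,y,t)$ in compact sets.

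\textbf{Main obstacle.} Step 1 is the main obstacle: it requires $\tilde{c}_\lambda$ to be uniform in $M$. This can fail if the constants $c_P$ and $\bar c_P$ for $U_P^M$ grow with $M$, which is exactly why the construction of $A$ keeps Assumption \ref{assumptionUP} with $M$-independent constants. If that is unclear, a fallback is to compare the unbounded controls directly. That would need a Gaussian tail for $z(T)$ uniform over all of $\mathcal{A}$, which is harder. So I would insist on the reduction to $\mathcal{A}^b_{\tilde{c}_\lambda}$.
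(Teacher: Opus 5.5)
Your proposal is correct and follows essentially the same route as the paper. You identify $w^\epsilon=w^b_{\tilde c_\lambda}$ and $w^{\epsilon,M}=w^{b,M}_{\tilde c_\lambda}$, bound the gap by $C_P\mathbb{E}[e^{-c_Pz(T)}\mathbf{1}_{\{z(T)<-M\}}]+\hat\epsilon\le C_PR(M)+\hat\epsilon$, and note that $\delta$, and hence $R(M)$, is independent of $\epsilon$ because the controls lie in $[\frac{1}{\tilde c_\lambda},\tilde c_\lambda]^d$ and $\sqrt{\epsilon}\le 1$. Your remarks that $\tilde c_\lambda$ must not depend on $M$ (the paper uses this only implicitly) and that the Gaussian tail follows from the exponential martingale inequality (the paper cites an external reference instead) are clarifications of the same argument, not a different one.
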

\begin{proof}[Proof of Lemma \ref{lemma_cvg_wM_to_w}]
    Indeed, by the construction of $U_P^M$, $w^\epsilon(x,y,t) \leq w^{\epsilon,M}(x,y,t) = w^{b,M}_{\tilde{c}_\lambda}(x,y,t)$. Moreover, by Theorem \ref{thm_cvg_w_i_w}, we have $w^\epsilon(x,y,t) = w^b_{\tilde{c}_\lambda}(x,y,t)$. Thus, denoting $z(T) := \ell \left(x^{\lambda^{b,M}}(T)\right)-U_A^{-1}(y^{\lambda^{b,M}}(T))$, we have
    \begin{align}
        0 &\leq w^{\epsilon,M}(x,y,t) - w^\epsilon(x,y,t) = w_{\tilde{c}_\lambda}^{b,M}(x,y,t) - w_{\tilde{c}_\lambda}^b(x,y,t)\\
        &= \sup _{\Lambda\in \mathcal{A}_{\tilde{c}_\lambda}^b(x, y, t)} \mathbb{E}\left[U_P^M\big(\ell\left(x^{\lambda}(T)\right)-U_A^{-1}(y^{\lambda}(T))\big)\right] - \sup _{\Lambda\in \mathcal{A}_{\tilde{c}_\lambda}^b(x, y, t)} \mathbb{E}\left[U_P\big(\ell\left(x^{\lambda}(T)\right)-U_A^{-1}(y^{\lambda}(T))\big)\right]\\
        &\leq \mathbb{E}\left[U_P^M\big(\ell\left(x^{\lambda^{b,M}}(T)\right)-U_A^{-1}(y^{\lambda^{b,M}}(T))\big)\right] - \mathbb{E}\left[U_P\big(\ell\left(x^{\lambda^{b,M}}(T)\right)-U_A^{-1}(y^{\lambda^{b,M}}(T))\big)\right] + \hat{\epsilon}\\
        &= \mathbb{E}\left[\big(U_P^M(z(T)) - U_P(z(T))\big)\mathbf{1}_{\{z(T)<-M\}}\right] + \hat{\epsilon}\\
        &\leq C_P\mathbb{E}\left[e^{-c_P z(T)}\mathbf{1}_{\{z(T)<-M\}}\right] + \hat{\epsilon}\\
        &\leq C_P R(M) + \hat{\epsilon}.
    \end{align}
                Since $\lambda^{b,M}$ is bounded by $\tilde{c}_\lambda$ independently of $M$ and $\epsilon$, and $0<\epsilon<1$, the drift and diffusion coefficients of $z$ are uniformly bounded in both $M$ and $\epsilon$. Hence, the constants $C_P$ and $\delta$ (from Dai et al. \cite{dai2026lifetime}), and therefore $R(M)$, can be chosen independently of $\epsilon$. Letting $\hat{\epsilon}\downarrow0$ in the preceding estimate yields
    \[
    0\leq
    \sup_{0<\epsilon<1}
    \left(w^{\epsilon,M}(x,y,t)-w^\epsilon(x,y,t)\right)
    \leq C_P R(M)\xrightarrow[M\to\infty]{}0.
    \]
    Therefore, the convergence is uniform in $\epsilon$.
\end{proof}
Combining Lemmas \ref{lemma_cvg_w^M0} and \ref{lemma_cvg_wM_to_w}, the Moore–Osgood theorem implies that both iterated limits exist and
\[
\lim_{\epsilon\to0}w^\epsilon
=
\lim_{\epsilon\to0}\lim_{M\to\infty}w^{\epsilon,M}
=
\lim_{M\to\infty}\lim_{\epsilon\to0}w^{\epsilon,M}
=
\lim_{M\to\infty}w^{0,M}.
\]
Furthermore, since $U_P^M\geq U_P$ by construction, $w^{0,M}\geq w^0$ for every $M$. Therefore
\begin{equation} \label{w^0,M_geq_w^0}
    \lim_{M\to\infty} w^{0,M} \geq w^0.
\end{equation}
Combining this inequality with \eqref{w^epsilon_leq_w^0}, we conclude that
$$
\lim_{\epsilon\to0}w^\epsilon = w^0.
$$
\begin{figure}[H]
\centering
\begin{tikzpicture}[scale=0.7, transform shape,
    block/.style={rectangle, draw=black, thick, fill=white,
                  text width=3.2cm, minimum height=1cm, align=center,
                  inner sep=8pt},
    arrow/.style={thick, -Stealth[scale=1.2]},
    doublearrow/.style={thick, <->, -Stealth[scale=1.2]},
    label/.style={midway, fill=white, inner sep=2pt, font=\small}
]
\node[block] (A) at (0,0) {$\lim_{\epsilon \rightarrow 0} w^\epsilon$};
\node[block, right=4.4cm of A] (B) {$\lim_{\epsilon \rightarrow 0}\lim_{M \rightarrow \infty} w^{\epsilon,M}$};
\node[block, right=4cm of B] (C) {$\lim_{M \rightarrow \infty}\lim_{\epsilon \rightarrow 0} w^{\epsilon,M}$};
\node[block, below=1.5cm of C] (D) {$\lim_{M \rightarrow \infty} w^{0,M}$};
\node[block, left=4cm of D] (E) {$w^0$};
\draw[thick, <->] (A) -- node[label, above] {Lemma \ref{lemma_w^bM_to_w^b} \& Theorem \ref{thm_cvg_w_i_w}} (B);
\draw[thick, <->] (B) -- node[label, above] {Lemma \ref{lemma_cvg_wM_to_w}} (C);
\draw[thick, <->] (C) -- node[label, left] {Lemma \ref{lemma_cvg_w^M0}} (D);
\draw[thick, ->] (D) -- node[label, above] {$\leq$ (see \eqref{w^0,M_geq_w^0})} (E);
\end{tikzpicture}
\caption{Logic flow}
\label{logic_2}
\end{figure}
Moreover, from the dominated convergence theorem, $w^0 = v \in B_C$. Thus, Step 1 is completed.

\textbf{Step 2:} Since $w^{\epsilon_m}\rightharpoonup w^0$ in $W^{2,1}_p(\overline{Q})$ for some sequence $\epsilon_m\downarrow0$, Mazur's lemma yields finite convex combinations
\[
    n_m:=\sum_{i=m}^{N_m}a_i^{(m)}w^{\epsilon_i}, \qquad a_i^{(m)}\geq0,\qquad \sum_{i=m}^{N_m}a_i^{(m)}=1,
\]
such that $n_m\to w^0$ strongly in $W^{2,1}_p(\overline{Q})$. Notice that $\|n_m\|_{W^{2,1}_\infty(\overline{Q})} \leq C_Q$ follows immediately from Theorem \ref{thm_cvg_w_i_w}(iii).

It follows that $\|n_m - w^0\|_{L^p(\overline{Q})} \rightarrow 0$. Then there exists a subsequence of $n_m$ such that $n_{m_j}(x,y,t) \rightarrow w^0(x,y,t)$ as $j \rightarrow \infty$ almost everywhere in $\overline{Q}$. Since $|n_{m_j}(x,y,t)| \leq C_Q$ everywhere in $\overline{Q}$ for every $j$, it follows that $|w^0(x,y,t)| \leq C_Q$ almost everywhere in $\overline{Q}$. Therefore, $\|w^0\|_{L^\infty(\overline{Q})} \leq C_Q$. Since strong convergence in $W^{2,1}_p(\overline{Q})$ guarantees strong convergence in $L^p(\overline{Q})$ for all partial derivatives, the same argument applies to $w^0_t$, $w^0_x$, $w^0_y$, $w^0_{xx}$, $w^0_{xy}$, and $w^0_{yy}$, allowing us to conclude that $w^0 \in W^{2,1}_{\infty}(\overline{Q})$.
\end{proof}

\end{document}